\documentclass[12pt]{amsart}
\textwidth=14.8truecm
\textheight=23.2truecm
\hoffset=-1truecm
\voffset=-2truecm
%
%
\usepackage{amsmath}
\usepackage{amsthm}
\usepackage{amssymb}
\usepackage{enumerate}
\usepackage{xcolor}
\usepackage{comment}
\newtheorem{theorem}{Theorem}[section]
\newtheorem{lemma}[theorem]{Lemma}
\newtheorem{proposition}[theorem]{Proposition}
\newtheorem{corollary}[theorem]{Corollary}

\newtheorem{remar}[theorem]{Remark}
\theoremstyle{definition}
\newtheorem{example}[theorem]{Example}
\newtheorem{prob}[theorem]{Open Problem}

\newenvironment{remark}{\begin{remar}\rm}{\end{remar}}

\newcommand{\bfind}[1]{\index{#1}{\bf #1}}
\newcommand{\n}{\par\noindent}
\newcommand{\sn}{\par\smallskip\noindent}
\newcommand{\mn}{\par\medskip\noindent}
\newcommand{\bn}{\par\bigskip\noindent}
\newcommand{\pars}{\par\smallskip}
\newcommand{\parm}{\par\medskip}

\newcommand{\ovl}[1]{\overline{#1}}

\newcommand{\ina}{^{\it in}}
\newcommand{\sep}{^{\it sep}}

\newcommand{\chara}{\mbox{\rm char}\,}

\newcommand{\Gal}{\mbox{\rm Gal}\,}

\newcommand{\crf}{\mbox{\rm crf}\,}

\newcommand{\ann}{\mbox{\rm ann}\,}
\newcommand{\cO}{\mathcal{O}}
\newcommand{\cM}{\mathcal{M}}
\newcommand{\cE}{\mathcal{E}}
\newcommand{\cA}{\mathcal{A}}

\newcommand{\cC}{\mathcal{C}}
\newcommand{\cD}{\mathcal{D}}
\newcommand{\rmd}{\mbox{\rm d}\,}
\newcommand{\rme}{\mbox{\rm e}\,}
\newcommand{\rmf}{\mbox{\rm f}\,}
\newcommand{\rmg}{\mbox{\rm g}\,}

\newcommand{\Q}{\mathbb Q}
\newcommand{\N}{\mathbb N}
\newcommand{\Z}{\mathbb Z}
\newcommand{\F}{\mathbb F}

\begin{document}
%
%
\title[]{K\"ahler differentials of extensions of valuation rings and deeply ramified
fields}

\author{Steven Dale Cutkosky and Franz-Viktor Kuhlmann}
\date{2.\ 6.\ 2025}

\thanks{The first author was partially supported by grant DMS 2054394
from  NSF of the United States.}
\thanks{The second author was partially supported by Opus grant 2017/25/B/ST1/01815 
from the National Science Centre of Poland. He was also supported by a Miller Fellowship
during a four weeks visit to the University of Missouri Mathematics Department in 2022, and 
a second visit in 2024 was funded by the University of Missouri;
he would like to thank the people at this department for their support and hospitality.}

\thanks{The authors thank Sylvy Anscombe, Arno Fehm, Hagen Knaf and Josnei Novacoski 
for inspiring discussions and helpful feedback, and Anna Rzepka for careful 
proofreading of an earlier version.
}

\address{Department of Mathematics, University of Missouri, Columbia,
MO 65211, USA}
\email{cutkoskys@missouri.edu}

\address{Institute of Mathematics, University of Szczecin,	
ul. Wielkopolska 15, 	  	  	
70-451 Szczecin, Poland}
\email{fvk@usz.edu.pl}

\begin{abstract}\noindent
Assume that $(L,v)$ is a finite Galois extension of a valued field $(K,v)$. We give an
explicit construction of the valuation ring $\cO_L$ of $L$ as an $\cO_K$-algebra, and 
an explicit description of the module of relative K\"ahler differentials 
$\Omega_{\cO_L|\cO_K}$ when $L|K$ is a Kummer extension of prime degree or an 
Artin-Schreier extension, in terms of invariants of the valuation and 
field extension. The case when this extension has nontrivial defect was solved in a 
recent paper by the authors with Anna Rzepka. The present paper deals 
with the complementary (defectless) case. The results are known 
classically for (rank 1) discrete valuations, but our systematic 
approach to non-discrete valuations (even of rank 1) is new.

\pars
Using our results from the prime degree case, we characterize when 
$\Omega_{\cO_L|\cO_K}=0$ holds for an arbitrary finite Galois extension of valued 
fields. As an application of these results, we give a simple proof of 
a theorem of Gabber and Ramero, which characterizes when a valued field is deeply 
ramified. We further give a simple characterization of deeply ramified fields 
with residue fields of characteristic $p>0$ in terms of the K\"ahler 
differentials of Galois extensions of degree $p$.
\end{abstract}

\subjclass[2010]{12J10, 12J25}
\keywords{K\"ahler differentials, deeply ramified fields, Artin-Schreier extension, 
Kummer extension}

\maketitle
%
%
\section{Introduction}
%
The main goal of this paper is to study for algebraic extensions 
of valued fields the relation between their properties and 
the vanishing of the K\"ahler differentials of the extensions of their valuation rings. 

 All of our results are for
arbitrary valuations; in particular, we have no restrictions on their rank or value 
groups. Ranks higher than $1$ appear in a natural way when local uniformization, the 
local form of resolution of singularities, is studied. Deeply ramified fields of 
infinite rank appear in model theoretic investigations of the tilting construction, as 
presented by Jahnke and Kartas in \cite{JK}.
Therefore, we do not restrict our computations to rank $1$, thereby indicating how 
Kähler differentials
can be computed in higher rank.

\pars
The notation we use is mostly standard in valuation theory or 
commutative algebra. We review notation and some main notions in Section \ref{sectnot}.

\pars
Our principal result is the following Theorem \ref{ThmI1}, which deals with extensions $L|K$
which are Kummer extensions of prime degree or Artin-Schreier extensions. In this paper we
compute the K\"ahler differentials $\Omega_{\cO_L|\cO_K}$ for such extensions when they
are \bfind{unibranched} and \bfind{defectless}, which  means that the extension of $v$ from 
$K$ to $L$ is unique and $[L:K]=(vL:vK)[Lv:Kv]$ holds. For the complementary case of such 
extensions with \bfind{nontrivial defect}, which in this special case means 
that $(vL:vK)=1=[Lv:Kv]$, see Theorems~4.5 and~4.6
in the recent paper \cite{CuKuRz} by the authors with Anna Rzepka. The description of 
these K\"ahler differentials is known classically for (rank 1) discrete valuations, but 
our systematic and detailed description is new, even for arbitrary valuations of rank 1. 
By $\Omega_{B|A}$ we denote the Kähler differentials, i.e., 
the module of relative differentials, when $A$ is a ring and $B$ is an $A$-algebra.

\begin{theorem}\label{ThmI1}
Let $(L|K,v)$ be a finite Galois extension of valued fields where $L|K$ is a Kummer
extension of prime degree or an Artin-Schreier extension. Then there is an explicit
description of $\Omega_{\cO_L|\cO_K}$ in terms of invariants of the valuation $v$ and 
field extension $L|K$. This gives a characterization of when $\Omega_{\cO_L|\cO_K}=0$.
\end{theorem}

The proof of Theorem \ref{ThmI1} is given in Section \ref{secKahExact}, after Proposition \ref{KahCalc20*}.
The analysis of the cases in Theorem \ref{ThmI1} begins with explicit constructions of 
the extensions $\cO_L|\cO_K$ of valuation rings, as a chain of simple ring extensions. 
This construction depends strongly on the type of extension. For the case of defectless
extensions it is given in Section~\ref{sectgen}; to the best of our knowledge, it is new 
and of independent interest. A result from \cite{CuKuRz}, stated in Proposition~\ref{LimProp} 
of the present paper, is then used to give the explicit description 
of $\Omega_{\cO_L|\cO_K}$ in Sections~\ref{sectseprfe} to~\ref{sectOKume}.

\pars
Annihilators of $\Omega_{\cO_L|\cO_K}$, differents $\cD_{\cO_L|\cO_K}$ and traces of the
maximal ideal $\cM_L$ of $\cO_L$ for the extensions appearing in Theorem \ref{ThmI1} have
been determined in \cite{CuKuRz} in the case of nontrivial defect. (Note that before
\cite[Theorem 1.6]{CuKuRz} we meant to write ``We denote the annihilator of an
$\cO_\cE$-module $M$ by $\ann M$''.)
The case of defectless extensions will be addressed in~\cite{KuTII}.

As an application of Theorem \ref{ThmI1}, we prove in Section~\ref{secKahExact} a 
criterion for the vanishing of the K\"ahler differentials for arbitrary finite Galois
extensions; see part 2) of Theorem~\ref{Kahler}. Finally, in Section~\ref{sectprf} all 
of these results are combined into the proof of the next theorem.

Take a valued field $(K,v)$ with valuation ring $\cO_K\,$. Choose any extension of $v$ 
to the separable-algebraic closure $K\sep$ of $K$ and denote the valuation ring of 
$K\sep$ with respect to this extension by $\cO_{K\sep}\,$. Note that $\Omega_{\cO_{K\sep}|
\cO_K}$ does not depend on the choice of the extension of $v$ since all of the possible
extensions are conjugate. Gabber and Ramero prove the following result (see 
\cite[Theorem~6.6.12 (vi)]{GR}):
\begin{theorem}              \label{GRthm}
For a valued field $(K,v)$, 
\begin{equation}                         \label{GRdefdr}
\Omega_{\cO_{K\sep}|\cO_K} \>=\> 0
\end{equation}
holds  if and only if it satisfies the following:
\sn
{\bf (DRvg)} whenever $\Gamma_1\subsetneq\Gamma_2$ are convex subgroups of the value
group $vK$, then $\Gamma_2/\Gamma_1$ is not isomorphic to $\Z$ (that is, no
archimedean component of $vK$ is discrete);
\sn
{\bf (DRvr)} if $\chara Kv=p>0$, then the homomorphism
\begin{equation}                          \label{homOpO}
\cO_{\hat K}/p\cO_{\hat K} \ni x\mapsto x^p\in \cO_{\hat K}/p\cO_{\hat K}
\end{equation}
is surjective, where $(\hat K,\hat v)$ is the completion of $(K,v)$ for the valuation 
topology and $\cO_{\hat K}$ denotes its valuation ring. 
\end{theorem}

Theorem~\ref{GRthm} and the papers \cite{Th1,Th2} of Thatte were the motivation for our 
work in the present paper and in \cite{CuKuRz}. 

For the purpose of the proof of Theorem~\ref{GRthm}, we define (as we have done in 
\cite{KuRz}) a nontrivially valued field $(K,v)$ to be a \bfind{deeply ramified field} 
if the conditions (DRvg) and (DRvr) hold.
In \cite{KuRz}, related classes of valued fields are introduced 
by weakening or strengthening condition (DRvg).

Note that by \cite[Definition~3.1]{Sch} a perfectoid field is a complete nondiscrete
rank 1 valued field of positive residue characteristic such that the Frobenius is surjective on $\cO_K/p\cO_K\,$. In rank 1, condition (DRvg) just says that the value
group is not discrete. Consequently, when using (DRvg) and (DRvr) for the definition of
deeply ramified fields, it is immediately seen that every perfectoid field is a deeply
ramified field.

\pars
The proof of Theorem \ref{GRthm} in \cite{GR} is a demonstration of the power of the
techniques of almost ring theory, and uses a large part of the theory developed in 
\cite{GR}. The proof is by reduction to the rank 1 case, where the techniques of 
almost ring theory are most applicable. 

Our alternative proof of Theorem~\ref{GRthm} in the present paper uses 
only methods from valuation theory and commutative algebra, and does not rely on 
techniques or results from almost ring theory. We hope that our proof makes this 
beautiful theorem accessible to a wider audience. Further, our proof yields the following
additional new result. A criterion for a valued field $(K,v)$ to be deeply ramified that 
only works with extensions of prime degree $p=\chara Kv$ appears to be more easily accessible
than the criterion $\Omega_{\cO_{K\sep}|\cO_K}=0$, in particular from the model theoretic 
point of view.
\begin{theorem}                              \label{GRthm+}
Let $(K,v)$ be a valued field of residue characteristic $p>0$. If $K$ has characteristic
$0$, then assume in addition that it contains all $p$-th roots of unity. Then $(K,v)$ is 
a deeply ramified field if and only if $\Omega_{\cO_L|\cO_K}=0$ for all unibranched
Galois extensions $(L|K,v)$ of prime degree $p$.
\end{theorem}

Let us mention two main ingredients of the proof. Theorem~1.10 (1) of \cite{KuRz} implies 
that if $(K,v)$ is a deeply ramified field with $\chara Kv=p>0$, then each of its Galois 
defect extensions of degree $p$ has independent defect. Hence we can infer the 
following result from \cite[Theorem~1.4]{CuKuRz}:
\begin{theorem}                            \label{MTsdrf1}
Take a deeply ramified field $(K,v)$ with $\chara Kv=p>0$; if $\chara K=0$, then 
assume that $K$ contains all $p$-th roots of unity. Then every Galois extension 
$(L|K,v)$ of degree $p$ with nontrivial defect satisfies $\Omega_{\cO_L|\cO_K}=0$.
\end{theorem}

This result will be complemented in the present paper by showing that for a deeply 
ramified field $(K,v)$, every unibranched defectless Galois extension $(L|K,v)$ of 
prime degree $p$ satisfies $\Omega_{\cO_L|\cO_K}=0$. Then Section~\ref{secKahExact}
connects our
results for Galois extensions of prime degree with $\Omega_{\cO_{K\sep}|\cO_K}$. 
There, the main approach is the study of K\"ahler differentials of towers of 
Galois extensions. In order to go upward through such towers, we make use of the 
following fact, which Gabber and Ramero deduce from Theorem~\ref{GRthm} (see 
\cite[Corollary 6.6.16 (i)]{GR}). However, as we want to prove Theorem~\ref{GRthm},
we refer the reader to Theorem~1.5 of \cite{KuRz} whose proof is done by a direct 
valuation theoretical computation not involving any K\"ahler differentials.
\begin{theorem}                            \label{MTsdrf2}
Every algebraic extension of a deeply ramified field is again a deeply ramified 
field.
\end{theorem}
It should be noted that 
Theorem~\ref{MTsdrf2} also holds for 
the roughly deeply ramified and the semitame fields that are introduced in \cite{KuRz}.

\pars
In \cite{NS}, Novacoski and Spivakovsky use the theory of key polynomials to derive a
presentation of $\Omega_{\cO_L|\cO_K}$ for finite pure extensions $(L|K,v)$ under the 
condition $vL=vK$. 
Applying this presentation to Artin-Schreier and Kummer extensions, 
they derive results similar to our results presented in \cite{CuKuRz} and in this paper.
Recently they also dealt with the case of $vL\ne vK$ by a different approach, not based 
on the use of key polynomials. See also \cite{NN1, NN2, No}.

\parm
To conclude this introduction, let us give some interesting examples. Let $\zeta_p$
denote a primitive $p$-th root of unity.
\begin{example}
Choose a prime $p>2$. The field $K=\Q_p(\zeta_p,p^{1/p^n}\mid n\in \N)$, equipped with the
unique extension of the $p$-adic valuation of $\Q_p$, is known to be a deeply ramified
field. The Kummer extension $(K(\sqrt{p})|K,v_p)$ is tamely ramified, as $(v_pK(\sqrt{p}):
v_pK)=2\ne p$.
By an application of Theorem~\ref{OKume} below, $\Omega_{\cO_{K(\sqrt{p})}|\cO_K}=0$.
The fact that this holds in spite of the ramification is due to the value
group $v_pK$ being dense, as it is $p$-divisible.

\pars
Analoguously, we can consider the field $K=\F_p((t))(t^{1/p^n}\mid n\in \N)$, 
equipped with the unique extension of the $t$-adic valuation of $\F_p((t))$.
This field is a deeply ramified field since it is perfect of positive characteristic. 
Again, the extension $(K(\sqrt{t})|K,v_t)$ is tamely ramified as $(v_tK(\sqrt{t}):
v_tK)=2\ne p$, and $v_t K$ is dense. By Theorem~\ref{OKume} below, 
$\Omega_{\cO_{K(\sqrt{t})}|\cO_K}=0$.
\end{example}

Finally, here is an example of a Kummer extension $(L|K,v)$ with 
wild ramification and $\Omega_{\cO_L|\cO_K}=0$.
\begin{example}
Take a prime $p>2$ and set $K=\Q(\zeta_p)(t^{1/2^n}\mid n\in \N)$. Let $v_p$ denote the
$p$-adic valuation on $\Q(\zeta_p)$
and $v_t$ the $t$-adic valuation on $K$. Now consider the valuation $v:=v_t\circ v_p$ on
$K$, where ``$v_t\circ v_p$'' denotes the valuation associated with the composition of
the $t$-adic place on $K$ and the $p$-adic place on $\Q(\zeta_p)$. Set $L=K(t^{1/p})$
and extend $v$ to $L$. Then
$(L|K,v)$ is a Kummer extension of degree $p$ with ramification index $p=\chara Kv$.
Nevertheless, Theorem~\ref{OKume} shows that $\Omega_{\cO_L|\cO_K}=0$.
\end{example}

\bn
%
%
\section{Preliminaries}
%
%
%
%
\subsection{Notation}                        \label{sectnot}  
\mbox{ }\sn 
By $(L|K,v)$ we denote a field extension $L|K$ where $v$ is a
valuation on $L$ and $K$ is endowed with the restriction of $v$. The valuation
ring of $v$ on $L$ will be denoted by $\cO_L\,$, and that on $K$ by $\cO_K\,$. 
Similarly, $\cM_L$ and $\cM_K$ denote the unique maximal ideals of $\cO_L$ and 
$\cO_K$. The value group of the valued field $(L,v)$ will be denoted by $vL$, and
its residue field by $Lv$. The value of an element $a$ will be denoted by $va$, and 
its residue by $av$. In order to simplify notation by reducing the use of brackets, our
convention will be that $v\ldots$ denotes the value of the term following ``$v$'', and 
$\ldots v$ denotes the residue of the term preceding ``$v$''; for example, $vxy=v(xy)$
and $xyv=(xy)v$. A {\bf final segment} of $vL$ is a subset $S\subseteq vL$ such that
$\gamma\leq\delta$ with $\gamma,\delta\in vL$ and $\gamma\in S$ implies that $\delta\in
S$.

The \bfind{rank} of a valued field $(K,v)$ is the order type of the
chain of proper convex subgroups of its value group $vK$. We say that $(L|K,v)$ is 
unibranched if the extension of $v$ from $K$ to $L$ is unique.


\sn
%
%
\subsection{Convex subgroups and archimedean components}     \label{sectcsac}
\mbox{ }\sn
Take an ordered abelian group $\Gamma$. Two elements $\alpha,\beta\in \Gamma$ are 
\bfind{archime\-dean equivalent} if there is some $n\in\N$ such that $n|\alpha|\geq
|\beta|$ and $n|\beta|\geq |\alpha|$, where $|\alpha|:=\max\{\alpha,-\alpha\}$. Note 
that if $0<\alpha<\beta<n\alpha$ for some $n\in\N$, then $\alpha$, $\beta$ and $n\alpha$ 
are (mutually) archimedean equivalent. If every two nonzero elements of $\Gamma$
are archimedean equivalent, then we say that $\Gamma$ is \bfind{archimedean ordered}.
This holds if and only if $\Gamma$ admits an order preserving embedding in the ordered
additive group of the real numbers.

We call $\Gamma$ \bfind{discretely ordered} if every element in $\Gamma$ has
an immediate successor; this holds if and only if $\Gamma$ contains a smallest positive 
element. In contrast, $\Gamma$ is called \bfind{dense} if $\Gamma\ne\{0\}$ and
for every two elements $\alpha<
\gamma$ in $\Gamma$ there is $\beta\in\Gamma$ such that $\alpha<\beta<\gamma$. If
$\Gamma$ is archimedean ordered and dense, then for every $i\in\N$ there is even some 
$\beta_i\in \Gamma$ such that $\alpha<i\beta_i<\gamma$; this can be easily proven via an 
embedding of $\Gamma$ in the real numbers. Every ordered abelian group is discrete if
and only if it is not dense.

For $\gamma\in\Gamma$, we define $\cC_\Gamma(\gamma)$ to be the smallest convex 
subgroup of $\Gamma$ containing $\gamma$, and for $\gamma\ne 0$, $\cC_\Gamma^+(\gamma)$ 
to be the largest convex subgroup of $\Gamma$ not containing $\gamma$. Note that 
$\cC_\Gamma(0)=\{0\}$. The convex subgroups of $\Gamma$ form a chain under inclusion, 
and the union and intersection of any collection of convex subgroups are 
again convex subgroups; this guarantees the existence of $\cC_\Gamma(\gamma)$ and
$\cC_\Gamma^+(\gamma)$.

We have that $\cC_\Gamma^+(\gamma)\subsetneq\cC_\Gamma(\gamma)$ and that 
$\cC_\Gamma^+(\gamma)$ and $\cC_\Gamma(\gamma)$ are consecutive, that is, there is 
no convex subgroup of $\Gamma$ lying properly between them. As a consequence, 
\[
\cA_\Gamma (\gamma) \>:=\> \cC_\Gamma(\gamma)/\cC_\Gamma^+(\gamma)
\]
for $\gamma\ne 0$ is an archimedean ordered group; we call it the \bfind{archimedean
component of $\Gamma$ associated with $\gamma$}. Two elements $\alpha,\beta\in\Gamma$
are archimedean equivalent if and only if 
\[
\cC_\Gamma (\alpha)\>=\>\cC_\Gamma (\beta)\>,
\]
and then it follows that $\cA_\Gamma (\alpha)=\cA_\Gamma (\beta)$. In particular, 
$\cC_\Gamma (\alpha)=\cC_\Gamma (n\alpha)$ and $\cA_\Gamma (\alpha)=\cA_\Gamma (n\alpha)$
for all $\alpha\in\Gamma$ and all $n\in\Z\setminus\{0\}$.

Assume now that $\Gamma$ is an ordered abelian group containing a subgroup $\Delta
\ne\{0\}$. We say that \bfind{$\Delta$ is dense in $\Gamma$} if for every two elements 
$\alpha<\gamma$ in $\Gamma$ there is $\beta\in\Delta$ such that $\alpha<\beta<\gamma$;
this implies that $\Gamma$ and $\Delta$ are dense. If $\Gamma$ is archimedean ordered,
then so is $\Delta$, and $\Delta$ is dense in $\Gamma$ if and only if it is dense.

For every $\gamma\in\Gamma$, $\cC_\Gamma(\gamma)\cap\Delta$ and $\cC_\Gamma^+(\gamma)
\cap\Delta$ are convex subgroups of $\Delta$; the quotient $\cC_\Gamma(\gamma)\cap
\Delta\,/\,\cC_\Gamma^+(\gamma)\cap\Delta$ is either trivial or archimedean ordered. 
If $\gamma$ is archimedean equivalent to $\delta\in\Delta$, then this quotient is 
equal to $\cA_\Delta(\delta)$.

For each $\delta\in\Delta$ the function given by
\[
\cA_\Delta (\delta)\ni\alpha+\cC_\Delta^+(\delta)\,\mapsto\, \alpha+\cC_\Gamma^+(\delta)
\in\cA_\Gamma (\delta)
\]
is an injective order preserving homomorphism. This follows from the fact that the kernel
of the homomorphism $\cC_\Delta (\delta)\ni\alpha\mapsto\alpha+\cC_\Gamma^+(\delta)\in
\cA_\Gamma (\delta)$ is the convex subgroup $\cC_\Delta^+ (\delta)=\cC_\Gamma^+(\delta)
\cap\Delta$. In abuse of notation, we write $\cA_\Delta (\delta)=\cA_\Gamma (\delta)$ if
this homomorphism is surjective.

\sn
%
%
%
\subsection{Artin-Schreier and Kummer extensions}        \label{sectASKext}
\mbox{ }\sn
We say that a valued field $(K,v)$ has \bfind{equal characteristic} if $\chara K=
\chara Kv$, and \bfind{mixed characteristic} if $\chara K=0$ and $\chara Kv>0$.
Every Galois extension of degree $p$ of a field $K$ of characteristic $p>0$ is an
\bfind{Artin-Schreier extension}, that is, generated by an \bfind{Artin-Schreier
generator} $\vartheta$ which is the root of an \bfind{Artin-Schreier polynomial} 
$X^p-X-b$ with $b\in K$. For every $c\in K$, also $\vartheta-c$ is an Artin-Schreier
generator as its minimal polynomial is $X^p-X-b+c^p-c$. Every Galois extension 
of prime degree $q$ of a field $K$ of characteristic different from $q$ which contains 
all $q$-th roots of unity is a \bfind{Kummer
extension}, that is, generated by a \bfind{Kummer generator} $\eta$ which satisfies 
$\eta^q\in K$. For these facts, see \cite[Chapter VI, \S6]{[L]}.

A \bfind{1-unit} in a valued field $(K,v)$ is an element of the form $u=1+b$ with 
$b\in\cM_K\,$; in other words, $u$ is a unit in $\cO_K$ with residue 1. We note that 
if $u$ is a $1$-unit, then also $u^{-1}$ is a $1$-unit, and if $v(u-c)>vu=0$ for some 
$c\in K$, then also $c$ is a 1-unit. Conversely, if $u$ and $c$ are 1-units, then $v(u-c)>0$. 

\begin{remark}                           \label{rem1-un}
Take a Kummer extension $(L|K,v)$ of degree $p$ with 
any Kummer generator $\eta$. Assume that $v\eta\in vK$, so that there is $c_1\in K$ 
such that $vc_1=-v\eta$, whence $vc_1\eta =0$. Assume further that $c_1\eta v\in Kv$, 
so that there is $c_2\in K$ such that $c_2v=(c_1\eta v)^{-1}$. Then $vc_2c_1\eta=0$ 
and $c_2c_1\eta v=1$. Furthermore, $K(c_2c_1\eta)=K(\eta)$ and $(c_2c_1\eta)^p=
c_2^pc_1^p\eta^p\in K$. Hence $c_2c_1\eta$ is a Kummer generator of $(L|K,v)$
and a $1$-unit. Therefore $v(c_2c_1\eta-1)>0$, whence $v(\eta-(c_2c_1)^{-1})>
v(c_2c_1)^{-1}=v\eta$. Consequently, for $c:=(c_2c_1)^{-1}\in K$ we have
$v(\eta-c)>v\eta$.
\end{remark}

\pars
We will need the following facts. 
If $(L|K,v)$ is a unibranched defectless extension of prime degree $p$, then either 
$\rme(L|K,v)=1$ and $\rmf(L|K,v)=p$, or $\rmf(L|K,v)=1$ and $\rme(L|K,v)=p$.
For $q\in\N$ let $\zeta_q$ denote a primitive $q$-th root of unity. We note that if 
$L|K$ is a Kummer extension of degree $q$, then $K$ contains all $q$-th roots of unity.
For a proof of the next well known results, see \cite[Lemma 2.5]{CuKuRz}.
\begin{lemma}
Take  $q\in\N$ and a valued field $(K,v)$ containing $\zeta_q\,$. Then 
\begin{equation}                       \label{prod1-z}
\prod_{i=1}^{q-1} (1-\zeta_q^i)\>=\> q \>.
\end{equation}
If in addition $q$ is prime, then
\begin{equation}                       \label{vz-1}
v(\zeta_q-1)\>=\>\frac{vq}{q-1} \>.  
\end{equation}
\end{lemma}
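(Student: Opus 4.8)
The plan is to establish the two assertions in turn, deducing (\ref{vz-1}) from (\ref{prod1-z}). For the product identity I would begin from the factorization $X^n-1=\prod_{i=0}^{n-1}(X-\zeta_n^i)$, valid because $\zeta_n$ primitive makes the powers $\zeta_n^0,\dots,\zeta_n^{n-1}$ the $n$ distinct roots of $X^n-1$. Cancelling the factor $X-1$ gives the polynomial identity
\[
\prod_{i=1}^{n-1}(X-\zeta_n^i)\>=\>1+X+\cdots+X^{n-1}\>,
\]
and evaluating both sides at $X=1$ yields $\prod_{i=1}^{n-1}(1-\zeta_n^i)=n$. This step is purely formal.

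The heart of the matter is (\ref{vz-1}), and here the strategy is to show that all $n-1$ factors in (\ref{prod1-z}) carry the \emph{same} value. For $1\le i\le n-1$ I would look at the quotient
\[
\frac{1-\zeta_n^i}{1-\zeta_n}\>=\>1+\zeta_n+\cdots+\zeta_n^{i-1}\>,
\]
a sum of roots of unity and hence integral over $\Z$. Because $n$ is prime we have $\gcd(i,n)=1$, so picking $j$ with $ij\equiv 1\pmod n$ gives $\zeta_n=(\zeta_n^i)^j$ and shows that the reciprocal
\[
\frac{1-\zeta_n}{1-\zeta_n^i}\>=\>1+\zeta_n^i+\cdots+(\zeta_n^i)^{j-1}
\]
is also integral over $\Z$. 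Since $\Z\subseteq\cO_K$ and $\cO_K$, being a valuation ring, is integrally closed in $K$, both quotients lie in $\cO_K$; thus each is a unit and $v(1-\zeta_n^i)=v(1-\zeta_n)$ for all $i$.

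Applying $v$ to (\ref{prod1-z}) then gives $vn=\sum_{i=1}^{n-1}v(1-\zeta_n^i)=(n-1)\,v(1-\zeta_n)$, and as the value group is torsion-free this pins down $v(1-\zeta_n)$ as the unique element $\frac{vn}{n-1}$, which is (\ref{vz-1}). I expect the only genuine obstacle to be the equal-values step: the tempting shortcut of transporting $v(1-\zeta_n)$ to $v(1-\zeta_n^i)$ via the automorphism $\zeta_n\mapsto\zeta_n^i$ is not available, since that automorphism of $\Q(\zeta_n)$ need not extend to $K$ nor preserve $v$. Working with the two reciprocal integral quotients keeps the argument intrinsic to $(K,v)$ and sidesteps this. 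The primality of $n$ enters exactly at $\gcd(i,n)=1$; for composite $n$ the factors need not share a common value, so (\ref{vz-1}) genuinely requires the hypothesis.
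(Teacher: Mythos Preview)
Your proof is correct. The paper does not supply its own argument for this lemma but simply cites \cite{CuKuRz}; your approach via the factorization of $X^n-1$ and the integrality of the reciprocal quotients $(1-\zeta_n^i)/(1-\zeta_n)$ is the standard one and goes through without issue.
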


\begin{lemma}                           \label{v(eta-1)}   
Take a unibranched Kummer extension $(L|K,v)$ of prime degree $q$ with Kummer 
generator $\eta$. Then for all $c\in K$,
\begin{equation}                           \label{eqv(eta-c)}
v(\eta-c)\>\leq\>v\eta(\zeta_q-1)\>=\>v\eta+\frac{vq}{q-1} \>.  
\end{equation}

Assume in addition that $\rmf (L|K,v)=q=\chara Kv$ and $c,\tilde{c}\in K$ are such that 
$v\tilde{c}(\eta-c)=0$ and $\tilde{c}(\eta-c)v$ generates the residue field extension
$Lv|Kv$. Then $Lv|Kv$ is inseparable if and only if $v(\eta-c)<v\eta(\zeta_q-1)$, and it
is separable if and only if $v(\eta-c)=v\eta(\zeta_q-1)$.
\end{lemma}
\begin{proof}
Take $c\in K$ and $\sigma\in\Gal L|K$ such that $\sigma\eta=\zeta_q\eta$. Then 
\begin{equation}                            \label{eta-c}
\eta-c -\sigma(\eta-c)\>=\>\eta-\sigma\eta\>=\>\eta(1-\zeta_q)\>.
\end{equation}
Hence if $v(\eta-c)>v\eta(1-\zeta_q)$, then
\[
v\sigma(\eta-c)\>=\>v(\eta-c -\eta(1-\zeta_q))\>=\>\min\{v(\eta-c),v\eta(1-\zeta_q)\}
\>=\> v\eta(1-\zeta_q)\><\>v(\eta-c)\>,
\]
which shows that $v\sigma\ne v$, i.e., the extension is not unibranched. This
contradiction proves the first assertion.

Now assume the situation as in the second part of the lemma. Since $L|K$ is a 
Galois extension, $Lv|Kv$ is a normal extension, with its automorphisms
induced by those of $L|K$. Take $\sigma$ to be a generator of $\Gal L|K$. Via the 
residue map, its action on $\cO_L^\times$ induces a generator $\bar\sigma$ of the
automorphism group of $Lv|Kv$. From (\ref{eta-c}) we infer that
\[
\tilde{c}(\eta-c)-\sigma \tilde{c}(\eta-c)\>=\>
\tilde{c}\eta(1-\zeta_q)\>.
\]
It follows that $\bar\sigma$ is the identity, i.e, $Lv|Kv$ is inseparable, if and 
only if $v\tilde{c}\eta(1-\zeta_q)>0$. This is equivalent to $v(\eta-c)=-v\tilde{c}<
v\eta(1-\zeta_q)$. Since $v(\eta-c)>v\eta(1-\zeta_q)$ is impossible according to
(\ref{eqv(eta-c)}), we can conclude that the residue field extension is separable 
if and only if $v(\eta-c)=v\eta(1-\zeta_q)$. 
\end{proof}

\begin{proposition}                   \label{trKext}
Take a Kummer extension $(L|K,v)$ of prime degree $q\ne\chara Kv$. 
\sn
1) If $\,\rmf (L|K,v)=q$, then there is a Kummer generator $\eta\in
\cO_L^\times$ such that $\eta v$ is a Kummer generator of $Lv|Kv$. 
\sn
2) If $\,\rme (L|K,v)=q$, then there is a Kummer generator $\eta\in L$ 
such that $v\eta$ generates the value group extension, that is, $vL=vK+\Z v\eta$.
\end{proposition}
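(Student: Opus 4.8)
The plan is to reduce both parts to a single observation about the residue of $q$-th roots of unity, and then to handle the two cases by normalizing a given Kummer generator. Throughout I take the extension to be unibranched, as is needed for $\rme$ and $\rmf$ to be well defined. Fix a Kummer generator $\eta_0$ with $a:=\eta_0^q\in K^\times$ and $L=K(\eta_0)$, and write $\zeta_q\in K$ for a primitive $q$-th root of unity. Since $q\ne\chara Kv$, the integer $q$ is a unit in $\cO_K$, so $vq=0$; by equation (\ref{vz-1}) this gives $v(1-\zeta_q^i)=vq/(q-1)=0$ for $1\le i\le q-1$. Hence each $\zeta_q^i$ lies in $\cO_K^\times$ with $\zeta_q^i v\ne 1$, so $\zeta_q v$ is a \emph{primitive} $q$-th root of unity in $Kv$. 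Because the extension is unibranched, every $\sigma\in\Gal L|K$ satisfies $v\circ\sigma=v$, hence fixes $\cO_L$ and $\cM_L$ and induces a $Kv$-automorphism $\bar\sigma$ of $Lv$. The key claim is then: \emph{if $\eta\in\cO_L^\times$ is any Kummer generator of $L|K$, then $\eta v\notin Kv$.} To prove it, pick a nontrivial $\sigma\in\Gal L|K$; as $\sigma$ fixes $\eta^q\in K$, the element $\sigma\eta$ is another root of $X^q-\eta^q$, so $\sigma\eta=\zeta_q^j\eta$ with $j\not\equiv 0\pmod q$ (else $\sigma\eta=\eta$ contradicts $L=K(\eta)$). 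Passing to residues and using that $\sigma$ fixes $\zeta_q\in\cO_K^\times$ gives $\bar\sigma(\eta v)=(\zeta_q^j v)\,\eta v$. Since $\zeta_q^j v\ne 1$ and $\eta v\ne 0$, we get $\bar\sigma(\eta v)\ne\eta v$; as $\bar\sigma$ fixes $Kv$, this forces $\eta v\notin Kv$.

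For part 1 we have $\rmf(L|K,v)=q$, hence $(vL:vK)=1$ and $vL=vK$. In particular $v\eta_0\in vK$, so there is $c\in K$ with $vc=v\eta_0$; then $\eta:=c^{-1}\eta_0\in\cO_L^\times$ satisfies $\eta^q=c^{-q}a\in K$ and $K(\eta)=K(\eta_0)=L$, so it is a unit Kummer generator. By the claim $\eta v\notin Kv$, whence $[Kv(\eta v):Kv]\ge 2$; since this degree divides $[Lv:Kv]=q$ and $q$ is prime, it equals $q$, so $Lv=Kv(\eta v)$. As $(\eta v)^q=(c^{-q}a)v\in Kv$ and $Kv$ contains the primitive $q$-th root of unity $\zeta_q v$, the residue $\eta v$ is a Kummer generator of $Lv|Kv$, as required.

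For part 2 we have $\rme(L|K,v)=q$, hence $[Lv:Kv]=1$, i.e.\ $Lv=Kv$. I first show $v\eta_0\notin vK$: if instead $v\eta_0\in vK$, the normalization from part 1 produces a unit Kummer generator $\eta$, and the claim yields $\eta v\notin Kv=Lv$, which is absurd. Thus $v\eta_0\notin vK$. Since $q\,v\eta_0=va\in vK$, the coset $v\eta_0+vK$ is a nonzero element of $vL/vK$ of order dividing $q$; as $(vL:vK)=q$ is prime, $vL/vK$ is cyclic of order $q$ and this coset generates it. Therefore $vL=vK+\Z v\eta_0$, so $\eta:=\eta_0$ is the desired generator. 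The main obstacle is precisely the key claim, i.e.\ ensuring that reducing a unit Kummer generator modulo $\cM_L$ does not collapse the extension; this is exactly where $q\ne\chara Kv$ is essential, as it is what makes $\zeta_q v$ a primitive $q$-th root of unity and thereby makes the Galois action on $\eta v$ nontrivial. Everything else is routine normalization together with the primality of $q$.
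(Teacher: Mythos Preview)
Your proof is correct. Both your argument and the paper's hinge on the same key fact, namely that $v(1-\zeta_q)=0$ when $q\ne\chara Kv$, but they deploy it differently. The paper routes the argument through Lemma~\ref{Kummerunibr}: after normalizing to $v\eta=0$, assuming $\eta v\in Kv$ yields some $c\in K$ with $v(\eta-c)>0=v\eta(1-\zeta_q)$, contradicting the bound of that lemma. You instead pass directly to the residue field via the induced Galois action, observing that $\bar\sigma(\eta v)=(\zeta_q^j v)\,\eta v\ne\eta v$ because $\zeta_q v$ is a \emph{primitive} $q$-th root of unity in $Kv$. Your route is slightly more self-contained, since it does not invoke the approximation inequality of Lemma~\ref{Kummerunibr}; on the other hand, the paper's route makes the role of the Krasner-type bound explicit, which is reused elsewhere (e.g.\ in Lemma~\ref{v(eta-1)}). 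Part~2) is handled identically in both proofs once part~1) (or your key claim) is available.
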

\begin{proof}
Since $q\ne\chara Kv$, we have $vq=0$ and thus $v(1-\zeta_q)=0$.
\n
1): Take a Kummer generator $\eta$. Since $\rmf (L|K,v)=q$, we have that $vL=vK$.
Therefore, as shown in Remark~\ref{rem1-un}, we can assume that $v\eta=0$. The
reduction of the minimal polynomial of $\eta$ over $K$ to the residue field is $X^q-
\eta^q v$ with $\eta^q v\ne 0$. Suppose that this polynomial has a root in $Kv$.  
Since $\Gal Lv|Kv$ is cyclic (generated by the reduction of a 
generator of $\Gal L|K$), it follows that $X^q-\eta^q v$ splits. Hence its root 
$\eta v$ lies in $Kv$ and there is $c\in K$ such that $cv=\eta v$. It follows that 
$v(\eta-c)>0=v\eta(1-\zeta_q)$, so by Lemma~\ref{v(eta-1)}, $(L|K,v)$ is
not unibranched. As this contradicts our assumption, $X^q-\eta^q v$ must be irreducible
(cf.\ \cite{stackex}), which means that $\eta v$ generates the extension $Lv|Kv$. Since 
$\eta^q\in K$, we have that $(\eta v)^q\in Kv$, i.e., $\eta v$ is a Kummer generator of
$Lv|Kv$.

\sn
2) Take a Kummer generator $\eta$. We will show that $v\eta\notin vK$; as $q$ is prime,
it then follows that $vL=vK+\Z v\eta$. Suppose that $v\eta\in vK$. Since $\rme (L|K,v)
=q=[L:K]$, we have that $Lv=Kv$. Thus as shown in Remark~\ref{rem1-un}, there is some
$c\in K$ such that $v(\eta-c)>v\eta=v\eta(1-\zeta_q)$. As in the proof of part 1), 
this leads to a contradiction. Hence $v\eta\notin vK$, as asserted.
\end{proof}

For the next lemma, see 
\cite[Lemma 2.1]{KuVl} and the proof of \cite[Theorem 2.19]{Ku30}.
\begin{lemma}                                         \label{dist_defectless}
If $(L|K,v)$ is a finite unibranched defectless extension, then 
for every element $x\in L$ the set 
\[
v(x-K)\>:=\>\{v(x-c)\mid c\in K\}
\]
admits a maximal element. If $c\in K$ is 
such that $v(x-c)$ is maximal, then $v(x-c)\notin vK$ or otherwise, for every $\tilde{c}
\in K$ such that $v\tilde{c}(x-c)=0$ we have $\tilde{c}(x-c)v\notin Kv$.
\end{lemma}

Using this lemma, we prove:
\begin{proposition}                               \label{ASKgen}
1) Take a valued field $(K,v)$ of equal positive characteristic $p$ and a unibranched
defectless Artin-Schreier extension $(L|K,v)$. 

If $\rmf(L|K,v)=p$, then the extension has an Artin-Schreier generator $\vartheta$ of 
value $v\vartheta\leq 0$ such that $Lv=Kv(\tilde{c}\vartheta v)$ for every $\tilde{c}
\in K$ with $v\tilde{c}\vartheta=0$; the extension $Lv|Kv$ is separable if and only if
$v\vartheta=0$. 

If $\rme(L|K,v)=p$, then the extension has an Artin-Schreier generator $\vartheta$ such 
that $vL=vK+\Z v\vartheta$. Every such $\vartheta$ satisfies $v\vartheta<0$.

\sn
2) Take a valued field $(K,v)$ of mixed characteristic and a unibranched defectless 
Kummer extension $(L|K,v)$ of degree $p=\chara Kv$. Then the extension has a Kummer 
generator $\eta$ such that: 
\sn
a) if $\,\rmf(L|K,v)=p$, then either $\eta v$ generates the residue field extension, 
in which case it is inseparable, or $\eta$ is a $1$-unit and for some $\tilde{c}\in K$, 
$\tilde{c}(\eta-1)v$ generates the residue field extension;
\sn
b) if $\,\rme(L|K,v)=p$, then either $v\eta$ generates the value group extension, or 
$\eta$ is a $1$-unit and $v(\eta-1)$ generates the value group extension.
\end{proposition}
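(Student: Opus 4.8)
The plan is to handle all four cases with one device. Since $(L|K,v)$ is unibranched, defectless and of prime degree $p$, exactly one of $\rmf(L|K,v)=p$ (whence $\rme=1$ and $vL=vK$) or $\rme(L|K,v)=p$ (whence $\rmf=1$ and $Lv=Kv$) occurs. In each case I would start from an arbitrary generator, use Lemma~\ref{dist_defectless} to pass to a representative realizing the maximal distance to $K$, and then decide which of the two alternatives of that lemma must hold from the numerical data: when $vL=vK$ the alternative ``$v(x-c)\notin vK$'' is impossible, so the residue alternative holds and produces a residue generator; when $Lv=Kv$ the residue alternative is impossible, so we get $v(x-c)\notin vK$ and hence a value generator. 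Throughout, primality of $p=[Lv:Kv]$ (resp.\ $(vL:vK)=p$) means that any element of $Lv\setminus Kv$ generates $Lv|Kv$ (resp.\ any value not in $vK$ generates $vL/vK$).

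For Part~1 I would first apply Lemma~\ref{dist_defectless} to an Artin-Schreier generator $\vartheta_0$, let $c$ attain the maximum of $v(\vartheta_0-K)$, and set $\vartheta:=\vartheta_0-c$, which is again an Artin-Schreier generator with $0$ maximizing $v(\vartheta-K)$. When $\rmf(L|K,v)=p$ the residue alternative yields $\tilde c$ with $v\tilde c\vartheta=0$ and $\tilde c\vartheta v\notin Kv$, so $Lv=Kv(\tilde c\vartheta v)$. To get $v\vartheta\le 0$ I would argue by contradiction: if $v\vartheta>0$, then from $\vartheta^p-\vartheta=b$ we get $vb=v\vartheta>0$, so $b\in\cM_{K^h}$; by Hensel's Lemma $X^p-X-b$ has a root in $K^h$, all its roots differ by elements of $\F_p$ and hence lie in $K^h$, so $\vartheta\in K^h$ and $L^h=K^h$, contradicting $[L^h:K^h]=[L:K]=p$. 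For separability I would reduce the relation: if $v\vartheta=0$ then $\vartheta v\notin Kv$ (take $\tilde c=1$) is a root of the separable Artin-Schreier polynomial $X^p-X-bv$ over $Kv$, so $Lv|Kv$ is separable; if $v\vartheta<0$ then $\eta:=\tilde c\vartheta$ has value $0$ and multiplying by $\tilde c^p$ gives $\eta^p-\tilde c^{p-1}\eta=\tilde c^pb$ with $\tilde c^{p-1}\eta\in\cM_L$, so $(\eta v)^p=(\tilde c^pb)v\in Kv$ and $Lv=Kv(\eta v)$ is purely inseparable. When $\rme(L|K,v)=p$ the alternative $v\vartheta\notin vK$ holds, so $v\vartheta$ generates $vL/vK$; and for any generator with $v\vartheta\notin vK$ one cannot have $v\vartheta\ge 0$, since $v\vartheta>0$ forces $vb=v\vartheta\in vK$ and $v\vartheta=0\in vK$, so $v\vartheta<0$.

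For Part~2 I would normalize a Kummer generator to $v\eta=0$, possible by $vL=vK$ and the rescaling described at the start of Subsection~\ref{sectASKext}. When $\rmf(L|K,v)=p$: if $\eta v\notin Kv$ it generates $Lv|Kv$, and since $(\eta v)^p=\eta^pv\in Kv$ this generator is purely inseparable, so the last clause of a) holds; if $\eta v\in Kv$, rescale $\eta$ to a $1$-unit, let $c$ attain the maximal distance, note $v(\eta-c)\ge v(\eta-1)>0=v\eta$ forces $vc=0$ and $cv=\eta v=1$ (so $c$ is a $1$-unit), replace $\eta$ by $c^{-1}\eta$, and observe that $\tilde c(\eta-1)v=\tilde c' (\eta'-c)v\notin Kv$ (with matching constants) generates $Lv|Kv$. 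When $\rme(L|K,v)=p$: if $v\eta\notin vK$ it already generates $vL/vK$; otherwise normalize $v\eta=0$, take the maximizer $c$, use that now $v(\eta-c)\notin vK$ forces $vc=0$ and $cv=\eta v$ (hence $c$ a $1$-unit), replace $\eta$ by $c^{-1}\eta$, and conclude $v(\eta-1)=v(\eta-c)\notin vK$ generates $vL/vK$.

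The main obstacle is the inequality $v\vartheta\le0$ in the Artin-Schreier case with $\rmf=p$: the maximal-distance and ultrametric bookkeeping alone do not exclude a positive value, and I expect to need the passage to the henselization together with the identity $[L^h:K^h]=[L:K]=p$ and Hensel's Lemma, as above. A secondary subtlety, which I would address in the Kummer case a), is that the dichotomy ``$\eta v\in Kv$ versus $\eta v\notin Kv$'' is intrinsic: if one value-$0$ Kummer generator has residue in $Kv$ then so do all of them (the others being of the form $d\eta^i$ with $d$ a unit), so the two displayed forms genuinely exhaust the possibilities and the residue generator $\eta v$ appears precisely when $Lv|Kv$ is (purely) inseparable, in agreement with Lemma~\ref{v(eta-1)}.
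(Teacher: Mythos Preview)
Your argument is correct and follows essentially the same route as the paper: apply Lemma~\ref{dist_defectless} to a chosen generator, read off from $vL=vK$ (resp.\ $Lv=Kv$) which alternative of the lemma must occur, then use Hensel's Lemma in the henselization to exclude $v\vartheta>0$ and, in the Kummer case, pass to a $1$-unit and replace $\eta$ by $c^{-1}\eta$. One small slip: in Part~2b) you claim ``hence $c$ a $1$-unit'' after only normalizing to $v\eta=0$; what you actually get is $cv=\eta v$, so $c^{-1}\eta$ is a $1$-unit, which is all you use --- either say this, or (as in your $\rmf$ case and as the paper does) rescale $\eta$ to a $1$-unit first, which is possible since $Lv=Kv$.
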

\begin{proof}
1): Take any Artin-Schreier generator $y$ of $(L|K,v)$. Then by 
Lemma~\ref{dist_defectless} there is $c\in K$ such that either $v(y-c)\notin vK$, or
for every $\tilde{c}\in K$ such that $v\tilde{c}(x-c)=0$ we have $\tilde{c}(y-c)v\notin 
Kv$. Since $p$ is prime, 
in the first case it follows that $\rme(L|K,v)=p$ and that $v(y-c)$ generates the value
group extension. In the second case it follows that $\rmf(L|K,v)=p$ and that $\tilde{c}
(y-c)v$ generates the residue field extension. In both cases, $\vartheta=y-c$ is an 
Artin-Schreier generator. Let $\vartheta^p-\vartheta=b\in K$.

Assume that $\rmf(L|K,v)=p$. If $v\vartheta<0$, then $v(\vartheta^p-b)=v\vartheta>
pv\vartheta=v\vartheta^p$, whence $v((\tilde{c}\vartheta)^p-\tilde{c}^pb)=
v\tilde{c}^p\vartheta>v(\tilde{c}\vartheta)^p$ for $\tilde{c}\in K$ with $v\tilde{c}\vartheta=0$ and therefore, $(\tilde{c}\vartheta)^p v
=\tilde{c}^pb v\in Kv$. In this case, the residue field 
extension is inseparable. Now assume that $v\vartheta\geq 0$ and hence also $vb\geq 0$.
The reduction of $X^p-X-b$ to $Kv[X]$ is a separable polynomial, so $Lv|Kv$ is 
separable. The polynomial $X^p-X-bv$ cannot have a zero in $Kv$, since otherwise 
the $p$ distinct roots of this polynomial give rise to $p$ distinct extensions of $v$
from $K$ to $L$, contradicting our assumption that $(L|K,v)$ is unibranched. 
Consequently, $bv\ne 0$, whence $vb=0$ and $v\vartheta= 0$.

Assume that $\rme(L|K,v)=p$. If 
$v\vartheta\geq 0$, then $vb\geq 0$ and $\vartheta v$ is a root of $X^p-X-bv$. If this 
polynomial does not have a zero in $Kv$, then $\vartheta v$ generates a nontrivial residue 
field extension, contradicting our assumption that $\rme(L|K,v)=p$. If the polynomial has 
a zero in $Kv$, then similarly as before one deduces that
$(L|K,v)$ is not unibranched, contradiction. Hence $v\vartheta<0$.

\sn
2): Take any Kummer generator $y$ of $(L|K,v)$. If there is a Kummer generator $\eta$
such that $v\eta\notin vK$, then it follows as before that $\rme(L|K,v)=p$ and that 
$v\eta$ generates the value group extension. Now assume that there is no such $\eta$.

If there is a Kummer generator $y$ and some $\tilde{c}\in K$ such that $v\tilde{c}y=0$ 
and $\tilde{c}yv\notin Kv$, then it follows as before that $\rmf(L|K,v)=p$ and that 
$\tilde{c}yv$ generates the residue field extension. We set $\eta=\tilde{c}y$ and 
observe that also $\eta$ is a Kummer generator.
Since $(\eta v)^p\in Kv$, $Lv|Kv$ is purely inseparable in this case.

Now assume that the above cases do not appear, and choose an arbitrary Kummer generator 
$y$ of $(L|K,v)$. Consequently, we have that $vy\in vK$ and $\tilde{c}yv\in Kv$ for all 
$\tilde{c}\in K$ with $v\tilde{c}y=0$. Then as described in Remark~\ref{rem1-un}, there 
are $c_1,c_2\in K$ such that $c_2c_1y$ is a Kummer generator of $(L|K,v)$ which is a 
$1$-unit. We replace $y$ by $c_2c_1y$.

By Lemma~\ref{dist_defectless} there is $c\in K$ such that $v(y-c)$ is maximal in 
$v(y-K)$ and either $v(y-c)\notin vK$ or there is some $\tilde{c}\in K$ 
such that $v\tilde{c}(y-c)=0$ and $\tilde{c}(y-c)v\notin Kv$. Since $y$ is a $1$-unit,
we know that $v(y-1)>0$, hence also $v(y-c)>0=vy$, showing that also $c$ is a $1$-unit.
Then $\eta:=c^{-1}y$ is again a Kummer generator of $(L|K,v)$ which is a $1$-unit. Since
$vc=0$, we know that $v(\eta-1)=vc(\eta-1)=v(y-c)$. Hence if $v(y-c)\notin vK$, then 
$v(\eta-1)$ generates the value group extension.

Now assume that there is $\tilde{c}\in K$ such that $v\tilde{c}(y-c)=0$ and 
$\tilde{c}(y-c)v\notin Kv$. Since $c$ is a $1$-unit, it follows that $v\tilde{c}(\eta-1)
=v\tilde{c}c(\eta-1)=v\tilde{c}(y-c)=0$ and $\tilde{c}(\eta-1)v=\tilde{c}c
(\eta-1)v=\tilde{c}(y-c)v$. We find that $\tilde{c}(\eta-1)v$ generates the residue field
extension.
\end{proof}

\sn
%
%
%
\subsection{Ramification ideals}        \label{sectri}
\mbox{ }\sn
Take a unibranched Galois extension $\cE=(L|K,v)$ and let $G=\Gal L|K$ denote its 
Galois group. An $\cO_L$-ideal 
\begin{equation}                     \label{genI}
\left(\frac{\sigma b -b}{b}\>\left|\;\> \sigma\in H\,,\>b\in L^\times\right.
\right)\>,
\end{equation}
where $H$ is a nontrivial subgroup of $G$, is called a \bfind{ramification ideal} of 
$\cE$. Hence if $\cE$ is of prime degree, then it has a unique ramification ideal, which we 
denote by $I_\cE\,$. For further background on ramification ideals, see
\cite{CuKuRz,KuRz,KuTI}. In \cite{KuTI}, the following is shown:
\begin{proposition}                              \label{PropASKgen}
Take a unibranched defectless Galois extension $\cE=(L|K,v)$ of prime degree $q$. 
\sn
1) Let $(L|K,v)$ be an Artin-Schreier extension and $\vartheta$ an Artin-Schreier
generator as in part 1) of Proposition~\ref{ASKgen}. Then 
\begin{equation}
I_{\cE}\>=\> \left(\frac{1}{\vartheta} \right)\>.
\end{equation}
We have $I_{\cE}=\cO_L$ if and only if $v\vartheta=0$, and this holds if and only if 
$Lv|Kv$ is separable of degree $q$.
\sn
2) Let $(L|K,v)$ be a Kummer extension. Then there are two cases:
\sn
a) Let $\eta$ be a Kummer generator as in part 2)a) of Proposition~\ref{ASKgen}. Then 
\begin{equation}                       
I_{\cE}\>=\> (\zeta_q-1)\>.
\end{equation}
\sn
b) Let $\eta$ be a Kummer generator as in part 2) b) of Proposition~\ref{ASKgen}. Then 
\begin{equation}                   \label{Icase2b}
I_{\cE}\>=\> \left(\frac{\zeta_q-1}{\eta-1}\right)\>.
\end{equation}
We have $I_{\cE}=\cO_L$ if and only if $v(\eta-1)=v(\zeta_q-1)$, and this holds if 
and only if $Lv|Kv$ is separable of degree $q$.
\end{proposition}

Also the ramification ideals of Artin-Schreier defect extensions and Kummer defect 
extensions of prime degree are computed in \cite{KuTI}.

\bn
%
%
\section{Generation of extensions of valuation rings} 
In this section we will assume that $\cE=(L|K,v)$ is a finite unibranched defectless
extension and develop the groundwork needed for the computation of the
K\"ahler differential of $\cE$
in Sections~\ref{sectASextf=p} to~\ref{sectOKume}.

%
%
\subsection{Generating the $\cO_K$-algebra $\cO_L$}     \label{sectgen}
\mbox{ }\sn
In order to use Proposition~\ref{LimProp} below to compute $\Omega_{\cO_L|\cO_K}$, we need 
to present $\cO_L$ as a union over a chain of simple ring extensions of 
$\cO_K\,$. We consider finite extensions $\cE=(L|K,v)$ of degree $q$ that satisfy
\[
[L:K]\>=\> [Lv:Kv]\quad \mbox{ or }\quad [L:K]\>=\> (vL:vK)\>.
\]
Such extensions are unibranched and defectless. We distinguish the following two cases:
\sn
{\bf Case (DL1)}: $[L:K]=[Lv:Kv]$.
In this case, we can choose elements $a_1,\ldots,a_q\in\cO_L^\times$ such that
$a_1v,\ldots,a_q v$ form a basis of $Lv|Kv$. Then $a_1,\ldots,a_q$ form a valuation basis
of $(L|K,v)$, which by definition means that every element of $z\in L$ can be written as
\begin{equation}                           \label{valbas}
z\,=\,c_1a_1+\ldots +c_qa_q\quad\mbox{ with }\quad vz\,=\,\min_i vc_ia_i\>,
\end{equation}
and we have that $vc_ia_i=vc_i\,$.
Consequently, $z\in\cO_L$ if and only if $c_1,\ldots,c_q\in\cO_K\,$. This shows that
$\cO_L$ is a free $\cO_K$-module with basis $a_1,\ldots,a_q\,$.

In the case where $Lv|Kv$ is simple, that is, there is $\xi\in Lv$ such that $Lv=
Kv(\xi)$, we can choose $x\in L$ such that $xv=\xi$; then $1,x,\ldots,x^{q-1}$ form a
valuation basis of $(L|K,v)$. In this special case (which by the Primitive Element
Theorem always appears when $Lv|Kv$ is separable), we have
\begin{equation}                      \label{(DL2a)}
\cO_L\>=\>\cO_K[x]\>.
\end{equation}

\sn
{\bf Case (DL2)}: $[L:K]=(vL:vK)$. We assume in addition that $q$ is a prime. In this
case we define $H_\cE$ to be the largest convex subgroup of $vL$ which is also
a convex subgroup of $vK$; it exists since unions over arbitrary collections of convex
subgroups are again convex subgroups. The subgroup $H_\cE$ defined here has important
similarities with the convex subgroup $H_\cE$ defined in the defect case in \cite{CuKuRz}.
We will discuss them in detail in \cite{KuTII}. In case (DL1) we set $H_\cE:=\{0\}$.

Now we diivide (DL2) into three mutually exclusive cases:
\sn
(DL2a): there is no smallest convex subgroup of $vL$ that properly contains $H_\cE\,$;
\sn
(DL2b): there is a smallest convex subgroup $\tilde H_\cE$ of $vL$ that properly contains
$H_\cE\,$, and the archimedean quotient $\tilde H_\cE/H_\cE$ is dense;
\sn
(DL2c): there is a smallest convex subgroup $\tilde H_\cE$ of $vL$ that properly contains
$H_\cE\,$, and the archimedean quotient $\tilde H_\cE/H_\cE$ is discrete.
\sn
We will freely use the facts outlined in Section~\ref{sectcsac}.

\pars
Pick any
$x\in L$ such that $vx\notin vK$. Then $vL=vK+\Z vx$ with $qvx\in vK$, and we have that
$1,x,\ldots,x^{q-1}$ form a valuation basis of $(L|K,v)$. This means that every element
of $L$ can be written as a $K$-linear combination of these elements
and for every choice of $c_0,\ldots,c_{q-1}\in K$,
\[
v\sum_{i=0}^{q-1} c_i x^i\>=\> \min_i vc_ix^i\>.
\]
Again, the sum is an element of $\cO_L$ if and only if all summands $c_ix^i$ are, but
the latter does not necessarily imply that $c_i\in\cO_K\,$. We set
\[
A_x\>:=\>\{c_ix^i\mid c_i\in K^\times\mbox{ and } 1\leq i<q\mbox{ such that }
vc_ix^i>0\}
\]
and
\[
vA_x\>:=\>\{va\mid a\in A_x\}\>.
\]
(Note that $vc_ix^i=0$ is impossible for $1\leq i<q$.) We obtain that
\begin{equation}              \label{Axgen}
\cO_L\>=\>\cO_K[A_x]\>.
\end{equation}
However, we wish to derive a much more useful representation of $\cO_L\,$. Our goal is 
to find an element $x$ as above such that
\begin{equation}                      \label{(DL2b)}
\cO_L\>=\> \bigcup_{c\in K\mbox{\tiny\ with } vcx>0} \cO_K[cx] \>.
\end{equation}
If $c,c'\in K$ with $vc\geq vc'\,$, then $cx=\frac{c}{c'}c' x\in
\cO_K[c' x]$, hence $\cO_K[c x] \subseteq \cO_K[c' x]$. So the right hand side 
is an increasing union of rings and thus is itself a ring. For (\ref{(DL2b)}) to hold,
it suffices that
\begin{equation}                      \label{(DL2b)A}
A_x\,\subseteq\, \bigcup_{c\in K\mbox{\tiny\ with } vcx>0} \cO_K[cx] \>.
\end{equation}
This in turn will hold if
\begin{equation}                      \label{Acx}
\left\{\begin{array}{l}
\mbox{for every element $c_ix^i\in A_x$ there is $c\in K$ with $cx\in A_x$}\\
\mbox{such that $c_ix^i\in (cx)^i\cO_K\,$.}
\end{array}\right.
\end{equation}

\begin{lemma}                            \label{HEAx}
The convex subgroup $H_\cE$ of $vL$ is the largest that has empty intersection with
$vA_x\,$.
\end{lemma}
\begin{proof}
From (\ref{Axgen}) it follows that the positve values in $vL\setminus vK$ all lie in
the smallest final segment of $vL$ generated by $vA_x\,$. On the other hand, from the
definition of $H_\cE$ it follows that it is the largest convex subgroup of $vL$ that does
not contain elements of $vL\setminus vK$. This proves our assertion.
\end{proof}

\pars
As a preparation for what follows, let us prove two useful facts.
\sn
(F1) \ For each $c_mx^m\in A_x\,$, there is $c\in K$ such that $cx\in
A_x$ and $\cC_{vL}(vc_mx^m)=\cC_{vL}(vcx)$.
\begin{proof}
As $q$ is prime, there is $k\in\N$ such that $mk=1+rq$ for some $r\in\Z$. Taking
$c:=c_m^k b^r\in K$ where $b\in K$ with $vb=qvx\,$, we obtain $vcx=
v(c_mx^m)^k>0$ and $\cC_{vL}(vcx)=\cC_{vL}(kv(c_mx^m))=\cC_{vL}(vc_mx^m)$.
\end{proof}

\sn
(F2) \ If $c_ix^i\,,\,\tilde{c}x\in A$ and $vc_ix^i\notin \cC_{vL}
(v\tilde{c}x)$, then $c_ix^i\in (\tilde{c}x)^i\cO_K\subseteq\cO_K[\tilde{c}x]$.
\begin{proof}
Since $v\tilde{c}^ix^i=iv\tilde{c}x\in \cC_{vL}(v\tilde{c}x)$ and $vc_ix^i\notin
\cC_{vL}(v\tilde{c}x)$, we have $v\tilde{c}^i x^i<vc_i x_0^i\,$. Thus $v\tilde{c}^i
<vc_i$ and therefore, $c_i x^i\in (\tilde{c}x)^i\cO_K$.
\end{proof}

\parm
Inspired by case (DL1) we ask whether (\ref{(DL2b)A}) will hold with $x=x_0$ for any
$x_0\in L$ such that $vx_0\notin vK$. We choose such an $x_0$ and set $A_0:=A_{x_0}\,$.
It can be shown that the element $x$ we are looking for cannot always be chosen to be
equal to $x_0\,$. However, we will show that in cases (DL2a) and (DL2b) it can.

\sn
(DL2a): Take any $c_ix_0^i \in A_0\,$. By assumption, $\cC_{vL}^+(vc_ix_0^i)$ properly
contains $H_\cE$. By Lemma~\ref{HEAx}, this means that $\cC_{vL}^+(vc_ix_0^i)\cap vA_0
\neq \emptyset$, so take some $c_mx_0^m\in A_0$ such that
$vc_mx_0^m\in \cC_{vL}^+(vc_ix_0^i)\cap vA_0\,$. By (F1), there is $\tilde{c}\in K$
such that $\tilde{c}x_0\in A_0$ and $\cC_{vL}(v\tilde{c}x_0)=\cC_{vL}(vc_mx_0^m)\subseteq
\cC_{vL}^+(vc_ix_0^i)$. Hence $vc_ix_0^i\notin \cC_{vL}(v\tilde{c}x_0)$ and by (F2),
$c_ix_0^i\in (\tilde{c}x_0)^i\cO_K\subseteq \cO_K[\tilde{c}x_0]$. Hence in this case,
(\ref{Acx}) and thus also (\ref{(DL2b)A}) and (\ref{(DL2b)}) hold for $x=x_0\,$.

\mn
(DL2b): By Lemma~\ref{HEAx}, $\tilde H_\cE$ is the smallest convex subgroup of $vL$ that
contains some element of $vA_0\,$, say $vc_mx_0^m$. The archimedean component $\cA_{vL}
(vc_mx_0^m)$ is equal to $\tilde H_\cE/H_\cE$, which is dense. The archimedean component
$\cA_{vK}(vqc_mx_0^m)$ is equal to $(\tilde H_\cE\cap vK)/H_\cE$. Since $(vL:vK)$ is
finite, so is this quotient. This shows that also $\cA_{vK}(vqc_mx_0^m)$ is dense, so it
is dense in $\cA_{vL}(vc_mx_0^m)$. We have $\cC_{vL}^+(c_mx_0^m)\cap vA_0=H_\cE\cap
vA_0=\emptyset$. From (F1) we know that there is $\tilde{c}\in K$ such that $\tilde{c}
x_0\in A$ and $\cC_{vL}(v\tilde{c}x_0)=\cC_{vL}(vc_mx_0^m)$.

Take any element $c_ix_0^i\in A_0\,$. If $vc_ix_0^i\notin \cC_{vL}(v\tilde{c}x_0)$,
then  $c_ix_0^i\in \cO_K[\tilde{c}x_0]$ by (F2). So let us assume that $vc_ix_0^i\in
\cC_{vL}(v\tilde{c}x_0)$. Denote by $\alpha$ the image of $v\tilde{c}x_0$ and by
$\beta$ the image of $vc_ix_0^i$ in $\cA_{vL}(v\tilde{c}x_0)$. Note that both of them
are positive, so
\[
-i\alpha\><\> \beta-i\alpha\>.
\]
By the density of $\cA_{vK}(qv\tilde{c}x_0)=\cA_{vK}(qvc_mx_0^m)$ in $\cA_{vL}
(vc_mx_0^m)=\cA_{vL}(v\tilde{c}x_0)$ there is $c_0\in K$ such
that the image $\gamma$ of $vc_0$ in $\cA_{vL}(v\tilde{c}x_0)$ satisfies
\[
-i\alpha\><\> i\gamma\><\> \beta-i\alpha\>,
\]
whence $0<i\gamma+i\alpha<\beta$.
This leads to $0<vc_0^i \tilde{c}^i x_0^i<vc_ix_0^i$. Setting $c=c_0\tilde{c}$, we
obtain that $0<vc^ix_0^i<vc_ix_0^i$, whence $c_ix_0^i\in (cx_0)^i\cO_K\subseteq 
\cO_K[cx_0]$ with $vcx_0>0$. We have proved that also in this case, (\ref{Acx}),
(\ref{(DL2b)A}) and (\ref{(DL2b)}) hold for $x=x_0\,$.

\parm
In case (DL2a), fact (F2) shows that (\ref{(DL2b)}) holds for $x=x_0$ because for every
$c_ix_0^i\in A_0$ there is some $\tilde{c}x_0\in A_0$ with $\cC_{vL}(v\tilde{c}x_0)
\subseteq \cC_{vL}^+(vc_ix_0^i)$. In case (DL2b) the latter is not true, but using
density we were able to show $c_ix_0^i\in\cO_K[cx_0]$ for some $c\in K$ with $vcx_0>0$
even when $vc_ix_0^i\in\cC_{vL}(v\tilde{c}x_0)$. In cases (DL2a) and (DL2b) we set
$x:=x_0\,$. The next case treats the instance where we do not have density at hand.

\mn
(DL2c): In this case, $\tilde H_\cE/H_\cE$ is discrete. Choose the element $c_mx_0^m$ as
in case (DL2b). Now we have that $\cA_{vL}(vc_mx_0^m)$ and $\cA_{vK}(qvc_mx_0^m)$ are
discrete. From (F1) we know that there is $\tilde{c}\in K$ such that $\tilde{c}x_0\in
A_0$ and $\cC_{vL}(v\tilde{c}x_0)=\cC_{vL}(vc_mx_0^m)$. The image $\alpha$ of
$v\tilde{c}x_0$ in $\cA_{vL}
(v\tilde{c}x_0)$ may not be its smallest positive element, which creates the problem that 
not all elements $c_ix_0^i\in A_0$ with $vc_ix_0^i\in \cC_{vL}(v\tilde{c}x_0)$ may lie in
$\cO_L[\tilde{c}x_0]$. So take any $c_jx_0^j\in A_0$ ($j\in\{1,\ldots,q-1\}$) with
$vc_jx_0^j\in\cC_{vL}(v\tilde{c}x_0)$ whose image $\gamma$ in $\cA_{vL}(v\tilde{c}x_0)$ 
is its smallest positive element. Since $j\in\{1,\ldots,q-1\}$, also
$1,x,\ldots,x^{q-1}$ form a valuation basis of $(L|K,v)$. Hence we may set $x:=x_0^j$ and
from now on work with $A_x$ in place of $A_0\,$.

Now we have that $vc_jx$ is the smallest positive element in $\cA_{vL}(vc_j x)=\cA_{vL}
(v\tilde{c}x_0)$ and $qvc_jx$ is the smallest positive element in $\cA_{vK}(qvc_j x)$. 
Further, the only elements strictly between $0$ and $q\gamma$ are $\gamma, 2\gamma,
\ldots,(q-1)\gamma$.

Take any element $c_ix^i\in A$. If $vc_ix^i\notin \cC_{vL}(vc_j x)$, then $c_ix^i\in 
(c_j x)^i\cO_K\subseteq\cO_K[c_j x]$ by (F2). So let us assume that $vc_ix^i\in\cC_{vL}
(vc_j x)$, and denote the image of $vc_ix^i$ in $\cA_{vL}
(vc_j x)$ by $\beta$. Write $c_ix^i=dc_j^ix^i$ with $d=c_ic_j^{-i}$, and denote 
by $\delta$ the image of $d$ in $\cA_{vL}(vc_j x)$, so that $0\leq\beta=\delta
+i\gamma$. Suppose that $\delta<0$; then $\delta+i\gamma=k\gamma$ for some $k\in
\{0,\ldots,i-1\}$, but as $\delta\in \cA_{vK}(qvc_j x)$, this is impossible. Hence
$\delta\geq 0$. If $\delta>0$, then $vd>0$, whence $c_ix^i\in (c_jx)^i\cO_K\subseteq
\cO_K[c_jx]$. 

Now assume that $\delta=0$. Then $vd\in \cC^+_{vL}(vc_j x)$.
If $d\in\cO_K\,$, then we are done again. So assume that $vd<0$ and write $c_ix^i=
dc_j^ix^i=d^{1-i}(dc_jx)^i$. Then $d^{1-i}\in\cO_K\,$, hence for $c:=dc_j\,$,
$c_ix^i\in (cx)^i\cO_K\subseteq \cO_K[cx]$. As $vd\in \cC^+_{vL}(vc_jx)$ and $vc_jx>0$, 
we have $vcx=vd+vc_jx>0$.

We have proved that in this case, (\ref{Acx}), (\ref{(DL2b)A}) and (\ref{(DL2b)}) hold for
$x=x_0^j\,$.

\pars
\begin{remark}
\rm Assume that $vK$ is $i$-divisible for all $i\in\{2,\ldots,q-1\}$, with $q$ not
necessarily prime. Take $c_ix_0^i\in A_0\,$. Then there is $c\in K$ such that $vc_i=ivc$.
We obtain that $vc^ix_0^i=vc_ix_0^i>0$, hence also $vcx_0>0$. Consequently, $c_ix_0^i\in
(cx_0)^i\cO_K^\times\subseteq\cO_K[cx_0]$. It follows that (\ref{Acx}), (\ref{(DL2b)})
and (\ref{Acx}) hold for $x=x_0\,$.

This case appears when $q=\chara Kv>0$ and $(K,v)$ is equal to its own absolute
ramification field, since then $vK$ is divisible by all primes other than $q$.
\end{remark}

\pars
Assume that $\cE$ is of type (DL2c) and, using the notation of that case, that $\cC^+_{vL}
(v\tilde{c}x_0)=\{0\}$ or equivalently, $H_\cE=\{0\}$. Then in the case of $\delta=0$
we have $vd=0$, whence $vc_ix^i=vc_j^ix^i$ and $c_ix^i\in (c_jx)^i\cO_K^\times\subseteq
\cO_K[c_jx]$. This shows that $\cO_L=\cO_K[c_jx]$. The assumption $H_\cE=\{0\}$ holds
in case (DL2c) if and only if $[L:K]=(vL:vK)$ equals the initial index of the extension
$(L|K,v)$, which is the number of nonnegative values of $vL$ that are smaller than any
positive element in $vK$. Therefore, our result is a proof of Knaf's conjecture about
essentially finite generation of $\cO_L$ over $\cO_K$
for the case of extensions of prime degree. The formulation and the (considerably more
involved) full proof of Knaf's conjecture
is given in \cite{D}. See also \cite{CuNo,No} for proofs of important special cases.

\pars
We summarize what we have shown in case (DL2):
\begin{theorem}                                \label{thmgen}
Take an extension $\cE=(L|K,v)$ of prime degree $q=\rme (L|K,v)$, with
$x_0\in L$ such that $vx_0\notin vK$.
\sn
1) If $\cE$ is of type (DL2a) or (DL2b), then (\ref{(DL2b)}) -
(\ref{Acx}) hold for $x=x_0\,$.
\sn
2) If $\cE$ is of type (DL2c), then (\ref{(DL2b)}) - (\ref{Acx}) hold for
$x=x_0^j$ with suitable $j\in\{1,\ldots,q-1\}$. If in addition $H_\cE=\{0\}$,
then $\cO_L=\cO_K[cx]$ for suitable $c\in K$.

\sn
The assumption of part 1) holds in particular when every archimedean component of $vK$
is dense, and this in turn holds for every deeply ramified field $(K,v)$.

\pars
In all cases, $1,x,\ldots,x^{q-1}$ form a valuation basis of $(L|K,v)$, and for all 
$c,c'\in K$,
\[
\cO_K[c x] \>\subseteq\> \cO_K[c' x] \>\Leftrightarrow vc\leq vc'\>.
\]
\end{theorem}
\begin{proof}
Only the implication ``$\Rightarrow$'' of the last assertion needs a proof.
Take $c,c'\in K$.
If $\cO_K[c x]
\subseteq\cO_K[c' x]$, then $c x\in \cO_K[c' x]$. Since the elements $1,c'x,\ldots,
(c'x)^{n-1}$ form a valuation basis of $(L|K,v)$, it follows that $c x=\frac{c}{c'}c' x$ 
with $\frac{c}{c'}\in\cO_K\,$, whence $vc\leq vc'\,$.
\end{proof}

\sn
%
%
\subsection{Valuation rings and ideals associated with the generation of $\cO_L|\cO_K$}\label{sectivr}
\mbox{ }\sn

The convex subgroup $H_\cE$ of $vL$ and the associated valuation ring and maximal ideal
turn out to be important invariants of the extension $\cE$. We take a closer look at them 
in this section.

The convex subgroups $H$ of $vL$ are in one-to-one correspondence with the coarsenings 
$v_H$ of $v$ on $L$ in such a way that $v_H L=vL/H$. The valuation ring of $v_H$ on $L$ 
is $\cO_{v_H}=\{a\in L \mid \exists\gamma\in H:\,va\geq\gamma\}$, and its maximal ideal 
is $\cM_{v_H}=\{a\in L \mid va>H\}$. We write $\cO_\cE$ for $\cO_{v_{H_\cE}}$, 
$\,\cM_\cE$ for $\cM_{v_{H_\cE}}$, and $v_\cE$ for~$v_{H_\cE}\,$. 

We note that $\cM_\cE$ is a nonprincipal $\cO_\cE$-ideal if $\cE$ is of type (DL2a) or
(DL2b), and a principal $\cO_\cE$-ideal if $\cE$ is of type (DL2c). Indeed, the value group
$v_\cE L$ is (up to equivalence) the quotient $vL/H_\cE$. In case (DL2a), this does not
have a smallest convex subgroup and thus no smallest positive element. In case (DL2b)
the quotient has a smallest convex subgroup. As it is dense, it does not have a
smallest positive element, and therefore the same holds for $v_\cE L$. Also in case (DL2c)
the quotient has a smallest convex subgroup. As now it is discrete, it has a smallest
positive element, and the same holds for $v_\cE L$.

\begin{proposition}                           \label{I}
Take an extension $\cE=(L|K,v)$ of prime degree $q=\rme (L|K,v)$, with $x$ determined by
Theorem~\ref{thmgen}. Then for every  $a\in L$ such that $va>H_\cE$ there is
$c\in K$ with $0<vcx\leq va$. Further, $\cM_\cE$ is equal to the $\cO_L$-ideal 
\begin{equation}                 \label{Ixdef}
I_x\>:=\> (cx\mid c\in K \mbox{ with } vcx>0)\>.
\end{equation}
\end{proposition}
\begin{proof}
Take $a\in L$ such that $va>H_\cE$ and write $a=\sum_{i=0}^{q-1} c_i x^i$ with $c_i\in K$.
Since $1,x,\ldots,x^{q-1}$ form a valuation basis, we have $vc_i x^i\geq va>0$ for 
$0\leq i\leq q-1$ with $va=\min_i vc_i x^i\,$.
In particular, $c_i x^i\in A_x$ for $1\leq i\leq q-1$. Hence it follows
from (\ref{Acx}) that for each such $i$ there is $d_i\in K$ with $0<vd_ix\leq v(d_ix)^i
\leq vc_ix^i$. It remains to consider the case of $i=0$. We have $vc_0\geq va>H_\cE\,$.

If $H_\cE\subsetneq\cC_{vL}^+(vc_0)$, then there is some $c_\ell x^\ell\in A_x$ with
$vc_\ell x^\ell\in \cC_{vL}^+(vc_0)$.
By (\ref{Acx}) there is $d_0\in K$ with $0<vd_0x\leq v(d_0x)^\ell\leq vc_\ell x^\ell
<vc_0$.

Now assume that $H_\cE=\cC_{vL}^+(vc_0)$, so $\cE$ is of type
(DL2b) or (DL2c). We will use the notation as in the computations for these two cases
in Section~\ref{sectgen}. With the element $c_mx_0^m$
appearing in these cases, we have $\cC_{vL}(vc_0)=\cC_{vL}
(vc_mx_0^m)$ and $\cA_{vL}(vc_0)=\cA_{vL}(vc_mx_0^m)$. 

In case (DL2b), $\cA_{vK}(qvc_mx_0^m)$
is dense in $\cA_{vL}(vc_mx_0^m)=\cA_{vL}(vc_0)$. Denote by $\alpha$ the image of 
$vc_mx_0^m$ and by $\gamma$ the image of $vc_0$ in $\cA_{vL}(vc_0)$. By density, there 
is $b\in K$ such that $\alpha-\gamma<\beta<\alpha$ with $\beta$ the image of 
$vb$ in $\cA_{vL}(vc_0)$. This leads to $vc_mx_0^m c_0^{-1}<vb<vc_mx_0^m\,$, that is,
$vc_0>vc_mx_0^m-vb>0$. Since $x=x_0$ in the present case, we obtain that
$b^{-1}c_mx_0^m=b^{-1}c_mx^m\in A_x\,$. Hence by (\ref{Acx}) there is
$d_0\in K$ with $0<vd_0x\leq v(d_0x)^m\leq vb^{-1}c_m x^m<vc_0$.

In case (DL2c), $\cA_{vK}(qvc_mx_0^m)$ is not dense in $\cA_{vL}(vc_mx_0^m)=
\cA_{vL}(vc_0)$. With $c_jx_0^j\in A_0$ chosen as in case (DL2c), the image $\gamma$ of
$vc_jx_0^j$ in $\cA_{vL}(vc_mx_0^m)$ is its smallest positive element. Then the image
$q\gamma$ of $qvc_jx_0^j$ is the smallest positive element of $\cA_{vK}(qvc_mx_0^m)
=\cA_{vK}(vc_0)$. Since $x=x_0^j$ in the present case, we obtain that $vc_0
\geq qvc_jx_0^j>vc_jx_0^j=vc_jx$, and we set $d_0=c_j\,$.

We have now proved that in all cases there is $d_0\in K$ such that $0<vd_0x<vc_0\,$.
We choose some $i_0\in \{0,\ldots,q-1\}$ such that $vd_{i_0}=\min \{vd_i\mid
0\leq i\leq q-1\}$ and set $c:=d_{i_0}$. Then
\begin{equation}                           \label{vcxleq}
vcx\>\leq\> vd_i x\>\leq\> vc_i x^i\quad\mbox{ for }\>0\leq i\leq
q-1\>.
\end{equation}
Hence $0<vcx\leq va$ as required.

\parm
Now we prove the second assertion.
All elements $cx$ as in (\ref{Ixdef}) lie in $A_x$ and therefore have value $>H_\cE$. It
follows that all elements in $I_x$ have value $>H_\cE$ and thus lie in $\cM_\cE\,$. This 
proves the inclusion $I_x\subseteq\cM_\cE\,$. 

For the converse, take $a\in\cM_\cE$, so $va>H_\cE\,$. By the first assertion of our
proposition, there is $c\in K$ with $0<vcx\leq va$. This implies $a\in cx\cO_L\subseteq 
I_x\,$.
\end{proof}

We give an application of this proposition.
\begin{corollary}                           
Take an extension $\cE=(L|K,v)$ of prime degree $q=\rme(L|K,v)$, with $x\in L$
determined by Theorem~\ref{thmgen}. If $\cE$ is of type (DL2a) or (DL2b), then
for every $a\in\cM_\cE$ there is $c\in K$ such that $a\,\cO_L\subseteq \cO_K[cx]$.
\end{corollary}
\begin{proof}
Since $a\in\cM_\cE\,$, we have $va>H_\cE\,$. In case (DL2a), $H_\cE\subsetneq
\cC_{vL}^+(va)$. Then there is an element $c_\ell x^\ell \in A_x\cap\cC_{vL}^+(va)$.
By Proposition~\ref{I}, there is $c\in K$ such that $0<vcx\leq vc_\ell x^\ell<va$. It 
follows that $vcx\in\cC_{vL}^+(va)$, hence $qvcx\in \cC_{vL}^+(va)$ and therefore, 
$qvcx\leq va$.

In case (DL2b), $H_\cE=\cC_{vL}^+(va)$ and $\cA_{vL}(va)$ is dense.
Hence there is $b\in L$ such that $H_\cE=\cC_{vL}^+(va)<vb$ (so $b\in
\cM_\cE$) and $qvb\leq va$. By Proposition~\ref{I} there is $c\in K$ such that
$0<vcx\leq vb$, whence again, $qvcx\leq va$.

Take any $a'\in a\,\cO_L$, so $va'\geq va$. Write $a'=\sum_{i=0}^{q-1} c_i x^i$. Then 
$vc_i x^i \geq va'\geq va\geq qvcx\geq vc^ix^i$ for $0<i<q-1$, hence $c_i x^i\in
c^ix^i\cO_K\subseteq \cO_K[cx]$. Since also $c_0\in\cO_K\subseteq \cO_K[cx]$, we obtain
that $a'\in\cO_K[cx]$, which shows that $a\,\cO_L\subseteq \cO_K[cx]$.
\end{proof}
\noindent
Note that the assertions of this corollary are trivially satisfied if $q=\rmf(L|K,v)$.
Moreover, the last assertion also holds if $\cE$ is of type (DL2c) with $H_\cE=\{0\}$.

\pars
The ideal $\cM_\cE$ will be useful in the computation of the K\"ahler differentials in
Theorems~\ref{OASe} and~\ref{OKume}. In preparation, we need a small technical lemma.

\begin{lemma}                     \label{aM^n}
Take a valuation ring $\cO$ with maximal ideal $\cM$. Whenever $2\leq n\in\N$ and
$a\in\cO_L$, then
\sn
1) \ $a\cM=\cM$ if and only if $a\notin\cM$,
\sn
2) \ $\cM^n=\cM$ if and only if $\cM$ is a nonprincipal $\cO$-ideal,
\sn
3) \ $(a\cM)^n= a\cM$ if and only if $a\notin\cM$ and $\cM$ is a nonprincipal $\cO$-ideal.
\end{lemma}
\begin{proof}
Denote by $w$ the valuation associated with $\cO$.
\sn
1): We have $a\notin a\cM$, hence if $a\in\cM$, then $a\cM\ne\cM$. If $a\notin\cM$, then
$a$ is a unit in $\cO$, so $a\cM=\cM$.
\sn
2): The value group of $w$ is not discrete, and hence dense, if and only if $\cM$ is a
nonprincipal $\cO$-ideal. If it is discrete and $\gamma$ is its smallest positive element,
then $\cM=\{b\in K\mid wb\geq\gamma\}$ and $\cM^n=\{c\in K\mid wc\geq n\gamma\}\subsetneq
\cM$ since $n\gamma>\gamma$. If it is dense, then for every $b\in\cM$ there is $c\in K$
such that $0<nwc<wb$, whence $b\in \cM^n$; therefore, $\cM^n\subseteq\cM\subseteq\cM^n$ and
consequently, $\cM^n=\cM$.
\sn
3): If $a\notin\cM$ and $\cM$ is a nonprincipal $\cO$-ideal, then by parts 1) and 2),
$(a\cM)^n= \cM^n=\cM=a\cM$.
If $a\in\cM$, then $wa>0$, whence $a\cM=\{c\in K\mid wc>wa\}$ and $(a\cM)^n\subseteq
a^n\cM=\{c\in K\mid wc>nwa\}\subsetneq a\cM$ since $nwa>wa$. If $\cM$ is a principal
$\cO$-ideal, say $\cM=b\cO$ with $b\in\cM$, then $a\cM=ab\cO=\{c\in K\mid wc\geq wab\}$
and $(a\cM)^n=(ab\cO)^n=\{c\in K\mid wc\geq nwab\}\subsetneq a\cM$ since $nwab>wab$ and
$ab\in a\cM$.
\end{proof}

\sn
%
%
\subsection{Differents of generators for Artin-Schreier and Kummer 
extensions}                                \label{sectdiff}
%
The proofs in Sections~\ref{sectseprfe} to~\ref{sectOKume} make use of the differents of 
the chosen generators for $\cO_L$ as an $\cO_K$-algebra. In this section we compute those
differents. 

If $b\in L$ and $h_b$ is its minimal polynomial over $K$, then $\delta(b):=h_b'(b)$ is
called the \bfind{different of $b$}. The $\cO_L$-ideal
\begin{equation}              \label{D0}
\cD_0(\cO_L|\cO_K)\>:=\>(h_b'(b)\mid b\in \cO_L\setminus\cO_K)\>.
\end{equation}
generated by the differents of all elements in $\cO_L\setminus\cO_K$ will be
called the \bfind{naive different ideal}.
%
\begin{proposition}                    \label{ndi}
Assume that $(L|K,v)$ is a nontrivial finite unibranched Galois extension and that
\[
\cO_L\>=\>\bigcup_{\alpha\in S} \cO_K[b_\alpha]
\]
for some (possibly finite) index set $S$ and elements $b_\alpha\in\cO_L\setminus
\cO_K\,$. Then $\cD_0(\cO_L|\cO_K)$ is equal to the $\cO_L$-ideal $(\delta(b_\alpha)
\mid\alpha\in S)$.
\end{proposition}
\begin{proof}
In the proof of \cite[Proposition~4.1]{CuKuRz} it is shown that $b\in\cO_K[b_\alpha]$
implies $v\delta(b)\geq v\delta(b_\alpha)$. Hence,
\[
(\delta(b) \mid b\in\cO_L\setminus\cO_K)\>=\>\bigcup_{\alpha\in S}
(\delta(b) \mid b\in \cO_K[b_\alpha]\setminus\cO_K)\>=\>\bigcup_{\alpha\in S}
(\delta (b_\alpha))\>=\> (\delta(b_\alpha)\mid\alpha\in S)\>.
\]
\end{proof}

\pars
In the case of Artin-Schreier and Kummer extensions $(L|K,v)$ with Galois group $G$ 
we have sufficient information about the minimal polynomials $f$ of the various 
generators $x$ we have worked with in the previous sections, and about
their conjugates, to work out the values $vf'(x)$ of their differents $f'(x)$. In order 
to do this, we can either compute $f'$, or we can use the formula
\begin{equation}                              \label{vf'}
f'(x)\>=\>\prod_{\sigma\in G\setminus\{\mbox{\tiny\rm id}\}} (x-\sigma x)\>.
\end{equation}

We keep the notations from the previous sections.

\mn
%
%
\subsubsection{Artin-Schreier extensions}
\mbox{ }\sn
Take an Artin-Schreier polynomial $f$ with $\vartheta$ as its root. Then its minimal
polynomial is $f(X)=X^p-X-\vartheta^p+\vartheta$ with $f'(X)=-1$, whence 
\begin{equation}                 \label{vf'AS}
f'(\vartheta)=-1\>.
\end{equation} 

\pars
For $c\in K^\times$, denote by $f_c$ the minimal polynomial of $c\vartheta$. Then
\begin{equation}                                 \label{vg'}
f_c'(c\vartheta)\>=\>\prod_{\sigma\in G\setminus\{\mbox{\tiny\rm id}\}} 
(c\vartheta-\sigma c\vartheta)\>=\>c^{p-1}f'(\vartheta)\>=\>-c^{p-1}\>. 
\end{equation}

\begin{lemma}                               \label{duASpa}
Take an Artin-Schreier extension $\cE=(L|K,v)$ of prime degree $p=\rmf (L|K,v)$. 
If the extension $Lv|Kv$ is purely inseparable, then $\cE$ admits an
Artin-Schreier generator $\vartheta$ of value $v\vartheta<0$ and $\tilde{c}\in K$
such that $v\tilde{c}\vartheta=0$, $Lv=Kv(\tilde{c}\vartheta v)$, $\cO_L= 
\cO_K[\tilde{c}\vartheta]$ and 
\begin{equation}                               \label{duASpadisp}
\cD_0(\cO_L|\cO_K)\>=\>f_{\tilde{c}}'(\tilde{c}\vartheta)\cO_L\>=\>\tilde{c}^{p-1}\cO_L
\>=\> I_\cE^{p-1} \>.
\end{equation}
\end{lemma}
\begin{proof}
The first assertions follow from part 1) of Proposition~\ref{ASKgen} and case (DL1). 
Applying Proposition~\ref{ndi} with $S=\{1\}$ and $b_1=\tilde{c}\vartheta$, we obtain
the first equality of (\ref{duASpadisp}).
Since $v\tilde{c}=-v\vartheta$, we have $\cD_0(\cO_L|\cO_K)=
f_{\tilde{c}}'(\tilde{c}\vartheta)\cO_L=\tilde{c}^{p-1}\cO_L=
(\vartheta^{-1})^{p-1}=I_\cE^{p-1}$ by (\ref{vg'}) and part 1) of
Proposition~\ref{PropASKgen}. This proves (\ref{duASpadisp}).
\end{proof}

\begin{lemma}                               \label{duASpb}
Take an Artin-Schreier extension $\cE=(L|K,v)$ of prime degree $p=\rme (L|K,v)$. 
Then $\cE$ admits an Artin-Schreier generator $\vartheta$ of value $v\vartheta<0$ 
such that $vL=vK+\Z v\vartheta$, (\ref{(DL2b)}) holds for $x=\vartheta^j$ 
with suitable $j\in \{1,\ldots,p-1\}$, and we have the equality of $\cO_L$-ideals
\begin{equation}                      \label{hjc'cth^j}
\cD_0(\cO_L|\cO_K)\>=\>
(\vartheta^{-1}I_{\vartheta^j})^{p-1}\>=\>(I_\cE\cM_\cE)^{p-1} \>.
\end{equation}
\end{lemma}
\begin{proof}
The existence of such $\vartheta$ and $j$ follows from part 1) of
Proposition~\ref{ASKgen} together with Theorem~\ref{thmgen}. Since
(\ref{(DL2b)}) holds for $x=\vartheta^j$, we can apply Proposition~\ref{ndi} with
$S=\{c\in K^\times\mid vc\vartheta^j>0\}$ and $b_c=c\vartheta^j$ to obtain:
\[
\cD_0(\cO_L|\cO_K)\>=\>(h_{j,c}'(c\vartheta^j)\mid c\in K^\times\mbox{ with }
vc\vartheta^j>0)\>,
\]
where $h_{j,c}$ denotes the minimal polynomial of $c\vartheta^j$. Now we compute:
\[
c\vartheta^j-\sigma c\vartheta^j\>=\>c(\vartheta^j -(\sigma\vartheta)^j)\>=\>
c(\vartheta^j -(\vartheta+k)^j)\>=\> -c\sum_{i=1}^j \binom{j}{i}\vartheta^{j-i} k^i
\]
for suitable $k\in\F_p^\times$. The summand of least value in the sum on 
the right hand side is the one for $i=1$. Using (\ref{vf'}), we obtain:
\begin{equation}                                \label{vh'}
vh_{j,c}'(c\vartheta^j)\>=\> (p-1)(vc\vartheta^{j-1})\>.
\end{equation}
Hence,
\begin{equation*}
\cD_0(\cO_L|\cO_K)\>=\>(c\vartheta^{j-1}\mid vc\vartheta^j>0)^{p-1}\>=\>
(\vartheta^{-1} I_{\vartheta^j})^{p-1}\>=\> (I_\cE\cM_\cE)^{p-1} \>,
\end{equation*}
where the last two equalities follow from
part 1) of Proposition~\ref{PropASKgen} and Proposition~\ref{I}.
This proves (\ref{hjc'cth^j}).
\end{proof}

\mn
%
%
\subsubsection{Kummer extensions}
\mbox{ }\sn
In what follows, $\zeta_p$ will denote a primitive $p$-th root of unity. If $L|K$ is 
a Kummer extension, then $\zeta_p\in K$.
Take a Kummer polynomial $f$ of degree $q$ with $\eta$ as its root. Then $f(X)=
X^q-\eta^q$ and $f'(X)=qX^{q-1}$, whence 
\begin{equation}                    \label{vf'eta}
f'(\eta)\>=\>q\eta^{q-1}\>.
\end{equation}

\begin{lemma}                                     \label{duKpf}
Take a Kummer extension $\cE=(L|K,v)$ of degree $p=\chara Kv$. Assume that
$\rmf (L|K,v)=p$. Then there exists a Kummer generator $\eta\in L$ such that 
one of the following cases holds:
\sn
i) $v\eta=0$, $Lv=Kv(\eta v)$ with $Lv|Kv$ inseparable, and $\cO_L=\cO_K[\eta]$, 
\sn
ii) $\eta$ is a 1-unit, $v\tilde{c}(\eta-1)=0$, $Lv=Kv(\tilde{c}(\eta-1)v)$ and 
$\cO_L=\cO_K[\tilde{c}(\eta-1)]$ for suitable $\tilde{c}\in K^\times$. 
\sn
In case i), for $f$ the minimal polynomial of $\eta$, 
\begin{equation}                         \label{f'etaO}
\cD_0(\cO_L|\cO_K)\>=\>f'(\eta)\cO_L\>=\>p\cO_L\>=\> I_\cE^{p-1}\>.
\end{equation}

In case ii), for $h_{\tilde{c}}$ the minimal polynomial of $\tilde{c}(\eta-1)$, 
\begin{equation}                         \label{h_tcO}
\cD_0(\cO_L|\cO_K)\>=\> h_{\tilde{c}}'(\tilde{c}(\eta-1))\cO_L\>=\> p\tilde{c}^{p-1}\cO_L
\>=\> I_\cE^{p-1}\>.
\end{equation}
%
\end{lemma}
\begin{proof}
The existence of such $\eta$ and $\tilde{c}$ follows from part 2)a) of 
Proposition~\ref{ASKgen}. The presentation of $\cO_L$ follows from case (DL1).
Applying Proposition~\ref{ndi} with $S=\{1\}$ and setting $b_1=\eta$ and $b_1=\tilde{c}
(\eta-1)$, respectively, we obtain the first equalities of (\ref{f'etaO}) and
(\ref{h_tcO}).

In case i), the second equality of 
(\ref{f'etaO}) follows from (\ref{vf'eta}) since $v\eta=0$. The third equality holds since
$vp=(p-1)v(\zeta_p-1)$ by (\ref{vz-1}), whence $p\cO_L=I_\cE^{p-1}$ by part 2)a) of 
Proposition~\ref{PropASKgen}.

\pars
For case ii) we compute with $\sigma$ a generator of $\Gal L|K$, using (\ref{vf'}):
\[
h_{\tilde{c}}'(\tilde{c}(\eta-1))\>=\> \prod_{i=1}^{p-1} \tilde{c}(\eta -\sigma^i\eta )
\>=\> (\tilde{c}\eta)^{p-1}\prod_{i=1}^{p-1}(1-\zeta_p^i)\>,
\]
whence by (\ref{prod1-z}),
\begin{equation}                                \label{vh'2}
vh_{\tilde{c}}'(\tilde{c}(\eta-1))\>=\> vp(\tilde{c}\eta)^{p-1}\>.
\end{equation}
This yields the second equality of (\ref{h_tcO}) since $v\eta=0$. The third holds as
$v\tilde{c}=-v(\eta-1)$ yields $vp\tilde{c}^{p-1}=(p-1)(v(\zeta_p-1)-v(\eta-1))$, whence
$p\tilde{c}^{p-1}\cO_L=I_\cE^{p-1}$ by part 2)b) of 
Proposition~\ref{PropASKgen}.
\end{proof}

\begin{lemma}                                     \label{duKpe}
Take a Kummer extension $\cE=(L|K,v)$ of prime degree $q=\rme (L|K,v)$. Then there are
two possible cases.
\sn
i) There is a Kummer generator $\eta\in L$ such that $vL=vK+\Z v\eta$,  
(\ref{(DL2b)}) holds for $x=\eta$, and we have the equality 
\begin{equation}                         \label{fc'etaI}
\cD_0(\cO_L|\cO_K)\>=\>qI_\eta^{q-1} \>=\>q\cM_\cE^{q-1}\>
\end{equation}
of $\cO_L$-ideals. If $q=\chara Kv$, then 
\begin{equation}                         \label{fc'etaIp}
\cD_0(\cO_L|\cO_K)\>=\>(I_\cE\cM_\cE)^{q-1}\>.
\end{equation}

If $q\ne\chara Kv$, then always this case i) holds, and the factor $q$ can be dropped
in (\ref{fc'etaI}) since $vq=0$.

\mn
ii) There is a Kummer generator $\eta\in L$ which is a $1$-unit such that for 
\begin{equation}                      \label{xi}
\xi\>:=\> \frac{\eta-1}{\zeta_q-1}\>,
\end{equation}
we have that $v\xi<0$, $vL=vK+\Z v\xi$, (\ref{(DL2b)}) holds for $x=\xi^j$ 
with suitable $j\in \{1,\ldots,q-1\}$, and we have the equality of $\cO_L$-ideals
\begin{equation}                         \label{hc'xiI}
\cD_0(\cO_L|\cO_K)\>=\>(\xi^{-1}I_{\xi^j})^{q-1} \>=\>(I_\cE\cM_\cE)^{q-1}\>.
\end{equation}
\end{lemma}
\begin{proof}
By part 2) of Proposition~\ref{trKext} and part 2)b) of Proposition~\ref{ASKgen}, the
extension admits a Kummer generator $\eta$ such that either $v\eta$ 
generates the value group extension, or $\eta$ is a $1$-unit and $v(\eta-1)$ generates 
the value group extension; moreover, the first case always holds if $q\ne\chara Kv$. 

Let us consider the first case.
Applying Theorem~\ref{thmgen} with $x_0=\eta$, we find that (\ref{(DL2b)}) holds for
$x=\eta^j$ with suitable $j\in \{1,\ldots,q-1\}$.  Since $\eta^j$ is
again a Kummer generator and also $v\eta^j$ generates the value group
extension as $j$ is prime to $q$, we may replace $\eta$ by $\eta^j$. As now
(\ref{(DL2b)}) holds for $x=\eta$, we can apply Proposition~\ref{ndi} with
$S=\{c\in K^\times\mid vc\eta>0\}$ and $b_c=c\eta$ to obtain:
\[
\cD_0(\cO_L|\cO_K)\>=\>(f_c'(c\eta)\mid c\in K^\times\mbox{ with } vc\eta>0)\>,
\]
where $f_c$ denotes the minimal polynomial of $c\eta$.

As also $c\eta$ is a Kummer generator, we can apply equation (\ref{vf'eta}) to obtain
that $f_c'(c\eta)=q(c\eta)^{q-1}$. Hence,
\begin{equation*}
\cD_0(\cO_L|\cO_K)\>=\> q(c\eta\mid c\in K \mbox{ with } vc\eta>0)^{q-1}\>=\>
q I_{\eta}^{q-1}\>=\> q\cM_\cE^{q-1}
\end{equation*}
where the last equation follows from Proposition~\ref{I}. This proves (\ref{fc'etaI}).

If $q=\chara Kv$, then $q\cM_\cE^{q-1}=((\zeta_q-1)\cM_\cE)^{q-1}=
(I_\cE\cM_\cE)^{q-1}$ since $vq=(q-1)v(\zeta_q-1)$ by (\ref{vz-1}) and the last 
equality follows from part 2)a) of Proposition~\ref{PropASKgen}. This proves
(\ref{fc'etaIp}).

\parm
Now we consider the second case. Since $L|K$ is a Kummer extension, $K$ contains 
$\zeta_q\,$. By Lemma~\ref{v(eta-1)}, $v(\eta-1)\leq v(\zeta_q-1)\in vK$ because $v\eta=0$.
Since $v(\eta-1)\notin vK$, inequality must hold. Hence with $\xi$ defined by 
(\ref{xi}), we have $v\xi<0$. Further, applying Theorem~\ref{thmgen} with $x_0=\xi$,
we find that (\ref{(DL2b)}) holds for $x=\xi^j$ with suitable $j\in \{1,\ldots,q-1\}$.
We apply Proposition~\ref{ndi} with $S=\{c\in K^\times\mid vc\xi^j>0\}$ and $b_c=c\xi^j$
to obtain:
\[
\cD_0(\cO_L|\cO_K)\>=\>(h_{j,c}'(c\xi^j)\mid c\in K^\times\mbox{ with } vc\xi^j>0)\>,
\]
where $h_{j,c}$ denotes the minimal polynomial of $c\xi^j$.

\pars
We note that $v(1-\zeta_q)=v(1-\zeta)$ for each primitive $q$-th root of unity $\zeta$.
We set $a:=\eta-1$. Then for every $\sigma\in G$, $v(a-\sigma a)=v(\eta-\sigma\eta)=
v(1-\zeta_q)>va$, hence
\[
a^j-\sigma a^j\>=\> a^j-(\sigma a)^j\>=\> a^j-(a+\sigma a -a)^j\>=\> 
-\sum_{i=0}^{j-1} \binom{j}{i} a^i (\sigma a -a)^{j-i}\>.
\]
Since $va<v(\sigma a -a)$, the summand of least value in the sum on the right 
hand side is the one for $i=j-1$. Consequently,
\begin{eqnarray*}
v(\xi^j-\sigma \xi^j) &=& v(a^j-\sigma a^j) -jv(1-\zeta_q)\>=\> (j-1)va+v(a-\sigma a) 
-jv(1-\zeta_q)\\
&=& (j-1)va+v(1-\zeta_q)-jv(1-\zeta_q)\>=\> (j-1)(va-v(1-\zeta_q))\\
&=& v\xi^{j-1}\>.
\end{eqnarray*}
Hence, equation (\ref{vf'}) shows that
\begin{equation}                                \label{vhc'K}
vh_{j,c}'(c\xi^j)\>=\> (q-1)vc\xi^{j-1}\>.
\end{equation}
Hence,
\begin{equation*}
\cD_0(\cO_L|\cO_K)\>=\> (c\xi^{j-1}\mid vc\xi^j>0)^{q-1}\>=\>
(\xi^{-1} I_{\xi^j})^{q-1}\>=\> (I_\cE\cM_\cE)^{q-1} \>
\end{equation*}
where the last two equalities follow from
by part 2)b) of Proposition~\ref{PropASKgen} and Proposition~\ref{I}.
This proves equation (\ref{hc'xiI}). 
%
\end{proof}

\bn
%
%
\section{K\"ahler differentials for Galois extensions of prime 
degree}                                          \label{sectcalc}
%
%
%
%
\subsection{Motivation} 
\mbox{ }\sn
We prove a proposition that will be a main tool for our handling of K\"ahler
differentials in the subsequent sections. It will provide a motivation for the 
calculation of the K\"ahler differentials for Artin-Schreier extensions and Kummer
extensions of prime degree which will be dealt with in this section.
\pars
Given a Galois extension $(L|K,v)$, we denote by $(L|K,v)\ina$ its inertia field
(cf.\ \cite[Section 19]{En}). 
%
\begin{proposition}                        \label{towerprop}
Let $(L|K,v)$ be
 a finite Galois extension. Then the following assertions hold.
\sn
1) There exists a tower of field extensions
\begin{equation}             \label{tow1}
K\>\subset\>K\ina\>=\>K_0\>\subset\> K_1\>\subset\,\cdots\,\subset\> K_\ell\>=\>L 
\end{equation}
where $K\ina=(L|K,v)\ina$ and each extension $K_{i+1}|K_i$ is a Galois extension of prime
degree. Note that if $K$ is henselian, then the extension $K\ina v|Kv$ is separable of
degree equal to $[K\ina:K]$.

\sn
2) Further, $(L|K,v)$ can be embedded in a finite 
Galois extension $(M|K,v)$ having the following properties:
\begin{equation}                         \label{tower}
\left\{\begin{array}{l}
\mbox{there exists a tower of field extensions}\\ 
\qquad\qquad\qquad\quad K\subset M_0\subset M_1\subset\cdots\subset M_m=M,\\ 
\mbox{where $M_0=(M|K,v)\ina$}\\ 
\mbox{and each extension $M_{i+1}|M_i$ is a Kummer extension of prime degree,}\\ 
\mbox{or an Artin-Schreier extension if the extension is of degree $p=\chara K$.}
\end{array}\right.
\end{equation}
\end{proposition}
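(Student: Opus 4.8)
The goal is to establish a tower (\ref{tow1}) for part 1) and then, in the henselian case, to embed $(L|K,v)$ in a possibly larger Galois extension $(M|K,v)$ whose tower (\ref{tower}) is built from steps that are \emph{either} Kummer extensions of prime degree \emph{or} Artin-Schreier extensions of degree $p=\chara K$. My plan is to treat the two parts separately, using the theory of the inertia field and standard solvability properties of the Galois group above it.

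For part 1), I would begin by recalling that above the inertia field $K\ina=(L|K,v)\ina$ the relevant Galois group is solvable: the ramification filtration gives that $\Gal(L|K\ina)$ has a normal series with cyclic quotients, the wild part (a $p$-group when $\chara Kv=p>0$) sitting inside the tamely ramified part whose quotient is abelian of order prime to $p$. Concretely, since $\Gal(L|K\ina)$ is solvable, I can refine any composition series so that each successive quotient is cyclic of prime order; pulling this back to the field side yields a chain $K\ina=K_0\subset K_1\subset\cdots\subset K_\ell=L$ in which each $K_{i+1}|K_i$ is Galois of prime degree. The final remark---that when $K$ is henselian the extension $K\ina v|Kv$ is separable of degree $[K\ina:K]$---is exactly the defining property of the inertia field, so I would simply cite the structure theory of the inertia field (e.g.\ \cite[Section 19]{En}).

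For part 2), the extra difficulty is that a Galois extension of prime degree $q$ need not be a Kummer extension unless $K$ contains a primitive $q$-th root of unity $\zeta_q$, and when $q=p=\chara K$ it can never be Kummer (there one wants Artin-Schreier form instead). So my plan is to first enlarge $L$ to a Galois extension $M'$ of $K$ by adjoining all the roots of unity $\zeta_q$ needed at each prime $q\ne p$ dividing $[L:K\ina]$; adjoining roots of unity is an abelian (hence solvable) and tame operation over the henselian base, so it does not destroy henselianity and the composite $M'=L\cdot K(\zeta_q:q)$ is again finite Galois over $K$. Over this enlarged base, every step of prime degree $q\ne p$ now has the requisite root of unity available in the ground field of that step, so by the theory of Subsection~\ref{sectASKext} (Kummer theory for $q\ne\chara K$, Artin-Schreier theory for $q=p$) each cyclic prime step can be realized in Kummer or Artin-Schreier form. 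I would then take $M$ to be $M'$ (or a suitable Galois closure thereof), set $M_0=(M|K,v)\ina$, and apply part 1) to $M$ to obtain the tower; the point is that refining the solvable group $\Gal(M|M_0)$ into prime cyclic steps, \emph{with the roots of unity now in place}, makes each step Kummer or Artin-Schreier as claimed.

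The main obstacle I anticipate is bookkeeping around the embedding and the placement of roots of unity: one must ensure that after adjoining the $\zeta_q$ the inertia field $M_0=(M|K,v)\ina$ still sits correctly below the wild/tame steps, so that each cyclic prime quotient of $\Gal(M|M_0)$ genuinely has its root of unity inside the field \emph{immediately below it} in the refined tower (not merely somewhere in $M$), which is what Kummer theory requires at each stage. Handling the degree-$p$ steps is comparatively clean, since in equal characteristic every cyclic degree-$p$ extension is automatically Artin-Schreier with no root-of-unity hypothesis; the delicate case is guaranteeing the tame cyclic steps can be ordered so that $\zeta_q$ is available at the base of each. I would resolve this by adjoining all needed roots of unity as the \emph{first} (tame, abelian) steps above $M_0$, so that they are present before any further cyclic refinement, which reduces the claim to a routine application of Kummer and Artin-Schreier theory at each prime.
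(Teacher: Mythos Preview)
Your approach for both parts is essentially the paper's: solvability of $\Gal(L|K\ina)$ for part 1), and adjunction of roots of unity for part 2). There is, however, one genuine subtlety in mixed characteristic that your plan glosses over. You say that adjoining roots of unity is ``tame'', and propose to adjoin $\zeta_q$ for all primes $q\ne\chara K$ dividing $[L:K\ina]$. In mixed characteristic ($\chara K=0$, $\chara Kv=p>0$) the extension $K(\zeta_p)|K$ is indeed tame (its degree divides $p-1$), but it is \emph{not} inertial: one has $v(\zeta_p-1)=\frac{vp}{p-1}$, so $\zeta_p$ typically lies strictly above the inertia field $M_0$. Consequently, the tower from $M_0$ to $M_0(\zeta_p)$ given by part 1) consists of cyclic steps of prime degrees $q'$ dividing $p-1$, and these $q'$ need not divide $[L:K\ina]$. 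So your set of adjoined roots of unity may be too small to make \emph{those} steps Kummer.

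The paper handles this by a two-stage construction: first it adjoins $\zeta_p$ and rebuilds the tower (noting explicitly that the new steps reaching $\zeta_p$ have degree $<p$), and only \emph{then} collects the full set $Q$ of primes $q\ne p$ appearing in the enlarged tower and adjoins $\zeta_q$ for $q\in Q$; since these latter adjunctions are genuinely inertial over a henselian base, they are absorbed into $M_0$ and the bookkeeping closes. Your resolution (``adjoin all needed roots of unity as the first steps above $M_0$'') can be made to work, but only after you enlarge the set of primes to include those arising from the $\zeta_p$-tower, or argue by iteration on degree; as written, the claim that ``every step of prime degree $q\ne p$ now has the requisite root of unity available'' is not yet justified.
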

\begin{proof} 
1): Set $K_0:=K\ina:=(L|K,v)\ina$. Since the extension $L|K\ina$ is solvable (cf.\ 
Theorems 24 
and 25 on pages 77 and 78 of \cite{ZS2}), there exists a tower (\ref{tow1}) of Galois
extensions such that each extension $K_{i+1}|K_i$ is Galois of prime degree. The 
assertions about the extension $K\ina v|Kv$ are part of the general properties of inertia
fields.
\sn
2): This proof is essentially the same argument as in the Galois characterization of sovability by radicals. If an extension $K_{i+1}|K_i$ in the tower (\ref{tow1}) is of degree $p=\chara K$, 
then it is an Artin-Schreier extension. If it is of prime degree $q\ne\chara K$, it is 
a Kummer extension if $K_i$ contains a primitive $q$-th root of unity. We will now 
explain how to enlarge the extension $(L|K,v)$ so that this will be the case for each
extension of prime degree $q\ne\chara K$ in a resulting new tower.

\pars
Assume that $(K,v)$ is of characteristic $0$ with $\chara Kv=p>0$ and that some 
extension $K_{i+1}|K_i$ is Galois of degree $p$, but $K$ does not contain a 
primitive $p$-th root of unity. In this case we will have to replace tower 
(\ref{tow1}) by a larger one. Let $\zeta_p$ denote a primitive $p$-th root of unity.
Then $K(\zeta_p)|K$ is a Galois extension, and so is $L(\zeta_p)|K$ since $L|K$ is
assumed to be Galois.
%

Set $K'_0:=(L(\zeta_p)|K,v)\ina$; then $K_0=K\ina\subset K'_0\,$. As before, $K'_0|K$ 
is Galois, hence so are $K'_0(\zeta_p)|K$ and $K'_0(\zeta_p)|K'_0\,$. By part 1) of our
proposition, there exists a tower of Galois extensions $K'_0\subset K'_1\subset\cdots
\subset K'_{r'}=K'_0(\zeta_p)$ such that each extension $K'_{i+1}|K'_i$ is Galois 
of prime degree. Since $[K'_0(\zeta_p):K'_0]<p$, none of the Galois extensions
$K'_{i+1}|K'_i$ is of degree $p$. 
\pars
We replace the tower (\ref{tow1}) by the tower
\begin{equation}             \label{tow2}
K'_0\,\subset\, K'_1\,\subset\cdots\subset\, K'_{r'}\>=\>K'_0(\zeta_p)\,\subset\,
K_1(\zeta_p)\,\subset\cdots\subset\, K_{\ell}(\zeta_p)\>=\>L(\zeta_p)\>.
\end{equation}
Now we have that if in mixed characteristic any extension in the tower (\ref{tow1}) is
Galois of degree $p=\chara Kv$, then it is a Kummer extension.

\pars 
We now return to the general case, with no restriction on the characteristic of $K$, first 
making the above change if necessary.

\pars
In order to also make sure that  all Galois extensions of prime degree $q\ne p$ in the
tower are Kummer extensions, we take $Q$ to be the set consisting of all such primes 
$q$. For every $q\in Q$, we choose a primitive $q$-th root of unity $\zeta_q$
and set $M:=L(\zeta_q\mid q\in Q)$. Every extension $K(\zeta_q)|K$ is
Galois, so $M|K$ is also a Galois extension.

Let us show that for every $q\in Q$, $\zeta_q$ lies in the inertia field of $(M|K,v)$. This is a standard fact, but we give a proof for completeness. 
The reduction of $X^q-1$ modulo $v$ is $X^q-1v$ with $1v$ being the $1$ in $Kv$. 
Since $q\ne\chara Kv$, the polynomial $X^q-1v$ has $q$ distinct roots. The minimal 
polynomial $f$ of $\zeta_q$ over $K$ divides $X^q-1$, so its reduction $fv$ divides
$X^q-1v$ and has therefore only simple roots. It follows that if $\sigma\in\Gal M|K$
with $\sigma\zeta_q\ne\zeta_q\,$, then $(\sigma\zeta_q)v\ne\zeta_q v$, whence 
$v(\sigma\zeta_q-\zeta_q)=0$. Hence every automorphism in the inertia group
$\{\sigma\in\Gal M|K\mid \forall x\in \cO_M: v(\sigma x-x)=0\}$ must fix $\zeta_q\,$,
which proves our claim. It follows that $M_0:=K_0(\zeta_q\mid 
q\in Q)$ is the inertia field of $(M|K,v)$. Finally, we set $M_i:=
K_i(\zeta_q\mid q\in Q)$. By our construction, now also all extensions of prime degree
$q\ne p$ in the tower are Kummer extensions. So we have obtained a tower as described 
in (\ref{tower}).
\end{proof}

\sn
%
%
\subsection{Some calculations of K\"ahler differentials} 
\mbox{ }\sn
Let $L|K$ be an algebraic field extension. Let $A\subseteq K$ be a normal 
domain whose quotient field is $K$. Assume that $z\in L$ is integral over $A$ and 
let $f(X)$ be the minimal polynomial of $z$ over $K$. Then $f(X)\in A[X]$ 
(see \cite[Theorem 4, page 260]{ZSI}). Since $f(X)$ is monic, $(f(X)K[X])\cap A[X]=
f(X)A[X]$, so $A[z]\cong A[X]/(f(X))$. Thus,
\begin{equation}\label{Kahlercomp}
\Omega_{A[z]|A}\>\cong\> [A[X]/(f(X),f'(X))]dX \>\cong\> [A[z]/(f'(z))]dX
\end{equation}
by \cite[Example 26.J, page 189]{Mat} and \cite[Theorem 58, page 187]{Mat}. There is a
universal derivation 
$d_{A[z]|A}:A[z]\rightarrow \Omega_{A[z]|A}$ defined by 
\begin{equation}\label{Simpd}
g(z)\mapsto 
[g'(z)]dX\mbox{ for }g(X)\in A[X],
\end{equation}
where $[g'(z)]$ is the class of $g'(z)$ in $A[z]/(f'(z))$.

We will also require the following theorem to calculate K\"ahler differentials. 
\sn 
\begin{proposition}{(\cite[Theorem 1.1]{CuKuRz})}                    \label{LimProp} 
Take an algebraic field extension $L|K$ of degree $n$, a normal domain $A$ with 
quotient field $K$ and a domain $B$ with
quotient field $L$ such that $B|A$ is an integral extension. Assume that there 
exist generators $b_{\alpha}\in B$ of $L|K$, which are indexed by a totally ordered set 
$S$, such that $A[b_{\alpha}]\subset A[b_{\beta}]$ if $\alpha\le\beta$ and
\begin{equation}                              \label{union}
\bigcup_{\alpha\in S}A[b_{\alpha}]\>=\>B.
\end{equation}
Further assume that there exist $a_{\alpha},a_\beta\in A$ such that $a_{\beta}\mid
a_{\alpha}$ if $\alpha\le \beta$ and for $\alpha\le\beta$, there exist $c_{\alpha,\beta}
\in A$ and expressions 
\begin{equation}                              \label{b_ab_b}
b_{\alpha}\>=\>\frac{a_{\alpha}}{a_{\beta}}b_{\beta}+c_{\alpha,\beta}\>.
\end{equation}
Let $h_{\alpha}$ be the minimal polynomial of $b_{\alpha}$ over $K$. Take $U$ 
and $V$ to be the $B$-ideals 
\begin{equation}                      \label{UV}
U\>=\>(a_{\alpha}\mid \alpha\in S)\quad\mbox{ and }\quad
V\>=\>(h_{\alpha}'(b_{\alpha})\mid \alpha\in S)\>.
\end{equation}

Then we have a $B$-module isomorphism
\begin{equation}                      \label{U/UV}
\Omega_{B|A}\>\cong\> U/UV\>.
\end{equation}

For the case where $(L|K,v)$ is a valued field extension and $A=\cO_K$ and $B=\cO_L\,$, 
for arbitrary $\gamma\in S$ the isomorphism (\ref{U/UV}) yields an $\cO_L$-module
isomorphism
\begin{equation}                 \label{altpres}
\Omega_{\cO_L|\cO_K}\>\cong\> U/b_\gamma^\dagger U^n \quad \mbox{ with }\quad
b_\gamma^\dagger\,:=\, \frac{h'_\gamma(b_{\gamma})}{a_\gamma^{n-1}}\>.
\end{equation}
\end{proposition}

Identifying the $B$-module $\Omega_{B/A}$ with $U/UV$ in the above theorem, the universal
derivation $d_{B|A}:B\rightarrow \Omega_{B|A}$ is defined by
\begin{equation}                                        \label{Compd}
d_{B|A}(z)\>=\>[a_{\alpha}g'_{\alpha}(b_{\alpha})]\in U/UV\quad\mbox{ for }\> z\>=\>
g_{\alpha}(b_{\alpha})\in A[b_{\alpha}]
\end{equation}
where  $[a_{\alpha}g'_{\alpha}(b_{\alpha})]$ is the class of $a_{\alpha}g'_{\alpha}
(b_{\alpha})$ in $U/UV$.

\parm
By definition, $V$ is the $B$-ideal generated by the differents of all $b_\alpha\,$. For
the case where $(L|K,v)$ is a valued field extension, we obtain from Proposition~\ref{ndi}:
\begin{lemma}
Under the assumptions of Proposition~\ref{LimProp}, the $\cO_L$-ideal $V$ defined in
(\ref{UV}) is equal to the $\cO_L$-ideal $\cD_0(\cO_L|\cO_K)$.
\end{lemma}
\noindent
This will be applied in the proofs of Theorems~\ref{OASe} and~\ref{OKume}.

\sn
%
%
\subsection{Finite extensions $(L|K,v)$ of degree  $[L:K]=\rmf(L|K)$
with separable residue field extension}                \label{sectseprfe}
\mbox{ }\sn
\begin{theorem}                 \label{seprfe}
Take a finite extension $(L|K,v)$  with $Lv|Kv$ separable of degree $[Lv:Kv]=
[L:K]$. Then $\cO_L=\cO_K[x]$ for some $x\in L$ with $vx=0$ and $Lv=Kv(xv)$, and we have
\[ 
\Omega_{\cO_L|\cO_K}=0\>.
\]
\end{theorem}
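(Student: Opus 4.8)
The plan is to obtain the stated presentation $\cO_L=\cO_K[x]$ directly from the analysis of Case (DL1), and then feed this single generator into the basic calculation in order to reduce $\Omega_{\cO_L|\cO_K}$ to a cyclic $\cO_L$-module $\cO_L/(f'(x))$, whose vanishing I would verify by a residue computation.

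First I would produce the generator. Since $Lv|Kv$ is separable of degree $[Lv:Kv]=[L:K]=n$, the Primitive Element Theorem makes it simple, say $Lv=Kv(\xi)$. Choosing $x\in\cO_L$ with $xv=\xi$, so that $vx=0$, the elements $1,x,\ldots,x^{n-1}$ form a valuation basis of $(L|K,v)$ exactly as in Case (DL1), and therefore $\cO_L=\cO_K[x]$ by equation~(\ref{(DL2a)}). This already establishes the first assertion of the theorem, including $Lv=Kv(xv)$.

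Next, to compute the differentials I would apply the basic calculation. The element $x\in\cO_L$ is integral over the valuation ring $\cO_K$, which is a normal domain with quotient field $K$; hence the minimal polynomial $f$ of $x$ over $K$ lies in $\cO_K[X]$, and equation~(\ref{Kahlercomp}) yields the $\cO_L$-module isomorphism $\Omega_{\cO_L|\cO_K}\cong\cO_L/(f'(x))\cO_L$. Thus the whole statement reduces to showing that $f'(x)$ is a unit of $\cO_L$, i.e.\ that $vf'(x)=0$.

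The key step, and the only one requiring any care, is this residue computation. Since $f$ is monic of degree $n$ with coefficients in $\cO_K$, its reduction $\bar f\in Kv[X]$ is monic of degree $n$ and has $\xi=xv$ as a root; because $[Kv(\xi):Kv]=n$, the polynomial $\bar f$ must be the minimal polynomial of $\xi$ over $Kv$, which by hypothesis is separable, so $\bar f'(\xi)\neq 0$. As $vx=0$, passing to residues commutes with differentiation and evaluation, whence the residue of $f'(x)\in\cO_L$ equals $\bar f'(\xi)\neq 0$. Therefore $vf'(x)=0$, the element $f'(x)$ is a unit, $(f'(x))=\cO_L$, and $\Omega_{\cO_L|\cO_K}\cong\cO_L/\cO_L=0$. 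I expect no genuine obstacle here; the only subtlety is confirming by degree-matching that $\bar f$ is indeed the separable minimal polynomial of $\xi$, which is precisely where the separability and the equality $[Lv:Kv]=[L:K]$ enter.
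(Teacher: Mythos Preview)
Your proof is correct and follows essentially the same approach as the paper: lift a primitive element of the separable residue extension via Case (DL1) to obtain $\cO_L=\cO_K[x]$, then apply (\ref{Kahlercomp}) and use degree-matching to identify $\bar f$ as the separable minimal polynomial of $xv$, forcing $f'(x)$ to be a unit. The paper's proof is more terse but identical in structure and content.
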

\begin{proof}
By (\ref{(DL2a)}), $\cO_{L}=\cO_K[x]$ where $x$ is a lift of a generator $\chi$ of $Lv$
over $Kv$. Let $f(X) \in K[X]$ be the minimal polynomial of $x$ over $K$. Since $vx=0$
and the extension is unibranched, also the conjugates of $x$ have value $0$ and thus, 
$f$ has coefficients in $\cO_K\,$. As $\deg f=
[L:K]=[Lv:Kv]$, the reduction $\bar f$ of $f$ in $Kv[X]$ is the minimal polynomial 
of $\chi$ over $Kv$. We have that $f'(x)v=\bar f'(\chi)$ which is nonzero since $\chi$ 
is separable over $Kv$. Thus $f'(x)$ is a unit in $\cO_{L}\,$. By (\ref{Kahlercomp}),
$\Omega_{{\cO_{L}} | \cO_K}\cong \cO_{L}/(f'(x))=0$.
\end{proof}

We note that this theorem always applies when $(L|K,v)$ is a Kummer extension of prime 
degree $q=\rmf(L|K)\ne \chara Kv$ since then $Lv|Kv$ is separable.

\sn
%
%
\subsection{Artin-Schreier extensions $(L|K,v)$ of degree $p$ with $\rmf(L|K)=p$ and
inseparable residue field extension}                \label{sectASextf=p}
\mbox{ }\sn
\begin{theorem}                      \label{OASf}
Take an Artin-Schreier extension $\cE=(L|K,v)$ of degree $p=\rmf(L|K)=\chara K$ with
$Lv|Kv$ inseparable. Then there exists an Artin-Schreier generator $\vartheta$ as in
Lemma~\ref{duASpa}, and we have
\begin{equation}                     \label{OASfdisp}
\Omega_{\cO_L|\cO_K}\>\cong\> \cO_L/(\tilde c^{p-1})\>\cong\>I_\cE/I_\cE^p
\end{equation}
as $\cO_L$-modules. Consequently, $\Omega_{\cO_L|\cO_K}\ne 0$.
\end{theorem}
\begin{proof}
The first isomorphism in
(\ref{OASfdisp}) follows from (\ref{Kahlercomp}) together with Lemma~\ref{duASpa}.
Since $v\tilde{c}=-v\vartheta$, we have $v\tilde{c}\ne 0$, whence $(\tilde c^{p-1})\ne
\cO_L\,$, as well as $\cO_L/(\tilde c^{p-1})\cong (\tilde c)/(\tilde c)^p=
I_\cE/I_\cE^p\,$ by part 1) of Proposition~\ref{PropASKgen}. This proves the second 
isomorphism in (\ref{OASfdisp}).
\end{proof}

With the notation of the statement and proof of Theorem \ref{OASf}, we have that for $z\in
\mathcal O_L$, $z=g(\tilde c \vartheta)$ for some $g(X)\in \mathcal O_K[X]$, and the 
universal derivation $d_{\mathcal O_L|\mathcal O_K}$ is defined by 
\[
d_{\mathcal O_L|\mathcal O_K}(z)\>=\>[g'(\tilde c\vartheta)]\in \mathcal O_L/(\tilde c^{p-1})
\]
by equation (\ref{Simpd}).

\sn
%
%
\subsection{Artin-Schreier extensions $(L|K,v)$ of degree $p$ with
$\rme(L|K,v)=p$}                      
\mbox{ }\sn
\begin{theorem}                       \label{OASe}
Take an Artin-Schreier extension $\cE=(L|K,v)$ of degree $p=\rme(L|K)$. Then there 
exists an Artin-Schreier generator $\vartheta$ as in Lemma~\ref{duASpb}, and we have
\begin{equation}                \label{Ome=p}
\Omega_{\cO_L|\cO_K}\>\cong\> \vartheta^{-1}\cM_\cE/(\vartheta^{-1}\cM_\cE)^p
\>=\> I_\cE\cM_\cE/(I_\cE\cM_\cE)^p
\end{equation}
as $\cO_L$-modules; in particular, $\Omega_{\cO_L|\cO_K}\ne 0$.
\end{theorem}
\begin{proof}
We will apply Proposition~\ref{LimProp} with $A=\cO_K$ and $B=\cO_L\,$. We set $S=
\{\alpha\in vK\mid \alpha+v\vartheta^j>0\}$, endowed with the reverse ordering of $vK$.
For each $\alpha\in S$ we choose $c_\alpha\in K$ such that $vc_\alpha=\alpha$. We
set $b_{\alpha}=c_\alpha\vartheta^j$, $a_{\alpha}=c_\alpha$, and $c_{\alpha,\beta}=0$.
Then $a_\beta | a_{\alpha}$ and $A[b_\alpha]\subseteq A[b_\beta]$ if $\alpha\leq\beta$,
and we have that $c_1\vartheta^j=\frac{c_1}{c_2}c_2\vartheta^j$.
We denote by $h_{\alpha}$ the minimal polynomial of $b_{\alpha}=c_\alpha\vartheta^j$
over $K$. Thus in the notation of Lemma~\ref{duASpb}, $h_{\alpha}=h_{j,\alpha}$ so that
$h_{\alpha}'(b_\alpha)=h_{j,\alpha}'(c_\alpha\vartheta^j)$ and $V=\cD_0(\cO_L|\cO_K)
=(\vartheta^{-1}I_{\vartheta^j})^{p-1}$ by equation
(\ref{hjc'cth^j}) of Lemma~\ref{duASpb}. Further, $U=(a_\alpha\mid\alpha\in S)=
(c_\alpha\mid\alpha\in S)=\vartheta^{-j}I_{\vartheta^j}\,$. Hence by 
Proposition~\ref{LimProp}, 
\[
\Omega_{\cO_L|\cO_K}\cong U/UV\>\cong\> \vartheta^{-j}I_{\vartheta^j}/\vartheta^{-j}
I_{\vartheta^j}\,(\vartheta^{-1}I_{\vartheta^j})^{p-1}\>\cong\> \vartheta^{-1}
I_{\vartheta^j}/(\vartheta^{-1}I_{\vartheta^j})^p\>. 
\]
Together with part 1) of Proposition~\ref{PropASKgen} and
Proposition~\ref{I}, this proves (\ref{Ome=p}).

Since $0<v\vartheta^{-1}\notin vK$, we have $v\vartheta^{-1}>H_\cE$ and therefore, 
$\vartheta^{-1}\in\cM_\cE\,$. By part 3) of Lemma~\ref{aM^n} it follows that
$(\vartheta^{-1}\cM_\cE)^p\subsetneq\vartheta^{-1}\cM_\cE\,$, which shows 
that $\Omega_{\cO_L|\cO_K}\ne 0$.
%
\end{proof}

With the notation of the statement and proof of Theorem \ref{OASe}, we have that for $z\in
\cO_L$, $z=g( c_{\alpha} \vartheta^j)$ for some $g(X)\in \cO_K[X]$, where $c_{\alpha}\in K$ 
is such that $vc_{\alpha}\vartheta^j>0$ and the universal derivation $d_{\cO_L|\cO_K}$ is 
defined by 
\[
d_{\cO_L|\cO_K}(z)\>=\>[c_{\alpha}g'(\tilde c_{\alpha}\vartheta^j)]\in I_\cE\cM_\cE/
(I_\cE\cM_\cE)^p
\]
by equation (\ref{Compd}).

\sn
%
%
\subsection{Kummer extensions $(L|K,v)$ of degree $p=\chara Kv$ with $\rmf(L|K)=p$}
\mbox{ }\sn
\begin{theorem}                                    \label{OKumf}
Let $(L|K,v)$ be a Kummer extension of degree $p=\rmf(L|K)=\chara Kv$. 
\pars
In case i) of Lemma~\ref{duKpf},
\begin{equation}                     \label{OKumfdisp1}
\Omega_{\cO_L|\cO_K}\>\cong\> \cO_L/(p)\>\cong\>I_\cE/I_\cE^p
\end{equation}
as $\cO_L$-modules. Consequently, $\Omega_{\cO_L|\cO_K}\ne 0$.

In case ii) of Lemma~\ref{duKpf},
\begin{equation}                     \label{OKumfdisp2}
\Omega_{\cO_L|\cO_K}\>\cong\> \cO_L/(p\tilde c^{p-1}) \>\cong\>I_\cE/I_\cE^p
\end{equation}
as $\cO_L$-modules, and
\begin{equation}                 \label{Ome=0}
\Omega_{\cO_L|\cO_K}=0\mbox{ if and only if $Lv|Kv$ is separable.}
\end{equation}
\end{theorem}
\begin{proof}
The first isomorphisms in (\ref{OKumfdisp1})
and (\ref{OKumfdisp2}) follow from Lemma~\ref{duKpf} and (\ref{Kahlercomp}).

In case i),
\[
\cO_L/(p)\>=\>\cO_L/((\zeta_p-1)^{p-1})\>\cong\> (\zeta_p-1)/(\zeta_p-1)^p
\>=\>I_\cE/I_\cE^p
\]
by part 2)a) of Proposition~\ref{PropASKgen}.

\pars
In case ii), where $v\tilde{c}=-v(\eta-1)$,
\[
\cO_L/(p\tilde c^{p-1})\>=\>\cO_L/(\tilde c (\zeta_p-1))^{p-1}\>\cong\> (\tilde c
(\zeta_p-1))/(\tilde c (\zeta_p-1))^p \>=\>I_\cE/I_\cE^p
\]
by part 2)b) of Proposition~\ref{PropASKgen}.

By Lemma~\ref{v(eta-1)}, $Lv|Kv$ is separable if and only if $-v\tilde{c}=v(\eta-1)=
\frac{vp}{p-1}$, i.e., $vp\tilde c^{p-1}=0$. This is equivalent to $(p\tilde c^{p-1})
=\cO_L\,$, and thus to $\Omega_{\cO_L|\cO_K}=0$.
%
\end{proof}

Let the notation be as in  the statement and proof of Theorem \ref{OKumf}. In case i), we 
have that for $z\in \cO_L$,  $z=g(\eta)$ 
for some $g(X)\in \cO_K[X]$ and the universal derivation $d_{\cO_L|\cO_K}$ is defined by 
\[
d_{\mathcal O_L|\cO_K}(z)\>=\>[g'(\eta)]\in\cO_L/(p)
\]
by equation (\ref{Simpd}). 
  
In case ii), we have that for $z\in \cO_L$,  $z=g(\tilde c(\eta-1))$ 
for some $g(X)\in \cO_K[X]$ and the universal derivation $d_{\cO_L|\cO_K}$ is defined by 
\[
d_{\mathcal O_L|\cO_K}(z)\>=\>[g'(\tilde c(\eta-1))]\in\cO_L/(p\tilde c)^{p-1}
\]
by equation (\ref{Simpd}).

\sn
%
%
\subsection{Kummer extensions $(L|K,v)$ of prime degree $q$ with
$\rme(L|K)=q$}                                      \label{sectOKume}
\mbox{ }\sn
\begin{theorem}                                      \label{OKume}
Let $\cE=(L|K,v)$ be a Kummer extension of prime degree $q$ with $\rme(L|K)=q$.
\sn
In case i) of Lemma~\ref{duKpe},
\begin{equation}                                     \label{OKumec}
\Omega_{\cO_L|\cO_K}\>\cong\> \cM_\cE/q\cM_\cE^q
\end{equation}
as $\cO_L$-modules. If $q\ne\chara Kv$, then
\begin{equation}                                     \label{OKumec1}
\cM_\cE/q\cM_\cE^q\>=\> \cM_\cE/\cM_\cE^q \>.
\end{equation}
If $q=\chara Kv$, then
\begin{equation}                                     \label{OKumecp} 
\cM_\cE/q\cM_\cE^q \>\cong\> (\zeta_q-1)\cM_\cE/((\zeta_q-1)\cM_\cE)^q
\>=\> I_\cE\cM_\cE/(I_\cE\cM_\cE)^q\>.
\end{equation}

We have that $\Omega_{\cO_L|\cO_K}= 0$ if and only if $q\notin\cM_\cE$ and $\cM_\cE$ is 
a nonprincipal $\cO_\cE$-ideal. The condition $q\notin\cM_\cE$ always holds when $q\ne 
\chara Kv$.


\mn
In case ii) of Lemma~\ref{duKpe},
\begin{equation}                                     \label{OKumec2} 
\Omega_{\cO_L|\cO_K}\>\cong\> \xi^{-1}\cM_\cE/(\xi^{-1}\cM_\cE)^q
\>=\> I_\cE\cM_\cE/(I_\cE\cM_\cE)^q
\end{equation}
as $\cO_L$-modules; in particular, $\Omega_{\cO_L|\cO_K}\ne 0$.
%
\end{theorem}
\begin{proof}
Assume that case i) holds. 
We will apply Proposition~\ref{LimProp} with $A=\cO_K$ and $B=\cO_L\,$. We set $S=\{\alpha
\in vK\mid \alpha +v\eta>0\}$, endowed with the reverse ordering of $vK$. For each
$\alpha\in S$ we choose $c_\alpha\in K$ such that $vc_\alpha=\alpha$. We
set $b_{\alpha}=c_\alpha\eta$, $a_{\alpha}=c_\alpha$, and $c_{\alpha,\beta}=0$.
Then $a_\beta | a_{\alpha}$ and $A[b_\alpha]\subseteq A[b_\beta]$ if $\alpha\leq\beta$,
and we have that $c_1\eta=\frac{c_1}{c_2}c_2\eta$.
We denote by $h_{\alpha}$ the minimal polynomial of $b_{\alpha}=c_\alpha\eta$
over $K$. Thus in the notation of Lemma~\ref{duKpe}, $h_{\alpha}=f_{c_\alpha}$ so that
$h_{\alpha}'(b_{\alpha})=f_{c_\alpha}'(c_\alpha\eta)$ and $V=\cD_0(\cO_L|\cO_K)=
qI_\eta^{q-1}$ by equation (\ref{fc'etaI}) of Lemma \ref{duKpe}. Further,
$U=(a_\alpha\mid\alpha\in S)=(c_\alpha\mid\alpha\in S)=\eta^{-1}I_\eta\,$. Hence by
Proposition~\ref{LimProp},
\[
\Omega_{\cO_L|\cO_K}\cong U/UV\>\cong\> \eta^{-1}I_\eta/\eta^{-1}I_\eta
\,qI_\eta^{q-1}\>\cong\> I_\eta/qI_\eta^q\>. 
\]
From Proposition~\ref{I} we know that $I_\eta=\cM_\cE\,$. This proves (\ref{OKumec}).

\pars
Assume that $q\ne\chara Kv$. Then $vq=0$, hence $q\notin\cM_\cE$ and  by part 1) of
Lemma~\ref{aM^n}, $q\cM_\cE^q=(q\cM_\cE)\cM_\cE^{q-1}=\cM_\cE\cM_\cE^{q-1}=\cM_\cE^q$.
This proves (\ref{OKumec1}).

\pars
Assume that $q=\chara Kv$. Then 
\[
\cM_\cE/q\cM_\cE^q \>\cong\>(\zeta_q-1)\cM_\cE/(\zeta_q-1)q\cM_\cE^q\>=\>(\zeta_q-1)
\cM_\cE/(\zeta_q-1)^q\cM_\cE^q \>=\> I_\cE\cM_\cE/I_\cE^q\cM_\cE^q\>,
\]
where we have used that $vq=(q-1)v(\zeta_q-1)$ and that $I_\cE=(\zeta_q-1)$ by
part 2)a) of Proposition~\ref{PropASKgen}. This proves (\ref{OKumecp}).

\pars
Now we determine when $\Omega_{\cO_L|\cO_K}= 0$ holds in the present case. If $q\in
\cM_\cE$, then $q\cM_\cE^q\subseteq q\cM_\cE\subsetneq \cM_\cE$ by part 1) of 
Lemma~\ref{aM^n}, and if $\cM_\cE$ is a principal $\cO_\cE$-ideal, then $q\cM_\cE^q
\subseteq \cM_\cE^q\subsetneq \cM_\cE$ by part 2) of Lemma~\ref{aM^n}; hence in both cases,
$\Omega_{\cO_L|\cO_K}\ne 0$. On the other hand, if $q\notin\cM_\cE$ and $\cM_\cE$ is a
nonprincipal $\cO_\cE$-ideal, then by parts 1) and 2) of Lemma~\ref{aM^n},
$q\cM_\cE^q=q\cM_\cE=\cM_\cE\,$, whence $\Omega_{\cO_L|\cO_K}= 0$. 
If $q\ne \chara Kv$, then $vq=0$, hence $q\notin\cM_\cE\,$.

\parm
Assume that case ii) holds. 
Again we will apply Proposition~\ref{LimProp} with $A=\cO_K$ and $B=\cO_L\,$. We set
$S=\{\alpha\in vK\mid \alpha+v\xi^j>0\}$, endowed with the reverse ordering of $vK$.
For each $\alpha\in S$ we choose $c_\alpha\in K$ such that $vc_\alpha=\alpha$. We
set $b_{\alpha}=c_\alpha\xi^j$, $a_{\alpha}=c_\alpha$, and $c_{\alpha,\beta}=0$.
Then $a_\beta | a_{\alpha}$ and $A[b_\alpha]\subseteq A[b_\beta]$ if $\alpha\leq\beta$; 
we have that $c_1\xi^j=\frac{c_1}{c_2}c_2\xi^j$.
We denote by $h_{\alpha}$ the minimal polynomial of $b_{\alpha}=c_\alpha\xi^j$ over $K$.
Thus in the notation of Lemma~\ref{duKpe}, $h_{\alpha}=h_{j,\alpha}$ so that
$h_{\alpha}'(b_{\alpha})=h_{j,\alpha}'(c_\alpha\xi^j)$ and $V=\cD_0(\cO_L|\cO_K)=
(\xi^{-1}I_{\xi^j})^{q-1}$ by equation (\ref{hc'xiI}) of Lemma \ref{duKpe}. 
Further, $U=(a_\alpha\mid\alpha\in S)=(c_\alpha\mid\alpha\in S)=\xi^{-j}I_{\xi^j}\,$. 
Hence by Proposition~\ref{LimProp}, $\Omega_{\cO_L|\cO_K}\cong U/UV\cong \xi^{-j}I_{\xi^j}
/\xi^{-j}I_{\xi^j}\,(\xi^{-1}I_{\xi^j})^{q-1}\cong \xi^{-1}I_{\xi^j}/\xi^{-1}I_{\xi^j}\,
(\xi^{-1}I_{\xi^j})^{q-1}=\xi^{-1}I_{\xi^j}/(\xi^{-1} I_{\xi^j})^q$. Again from
Proposition~\ref{I} we know that $I_{\xi^j}=\cM_\cE\,$. Further, $(\xi^{-1})=I_\cE$ by 
part 2)b) of Proposition~\ref{PropASKgen}. This proves (\ref{OKumec2}).

Since $0<v\xi^{-1}\notin vK$, we have $v\xi^{-1}>H_\cE$ and therefore, $\xi^{-1}\in
\cM_\cE\,$. By part 3) of Lemma~\ref{aM^n} it follows that $(\xi^{-1}\cM_\cE)^q
\subsetneq\xi^{-1}\cM_\cE\,$, which shows that $\Omega_{\cO_L|\cO_K}\ne 0$.
%
\end{proof}

Let the notation be as in the statement and proof of Theorem \ref{OKume}. In case i), we have
that for $z\in \cO_L$,  $z=g(c_{\alpha}\eta)$ for some $c_{\alpha}\in K$ such that 
$vc_{\alpha}\vartheta>0$ and $g(X)\in \cO_K[X]$ and the universal derivation 
$d_{\cO_L|\cO_K}$ is defined by 
\[
d_{\cO_L|\cO_K}(z)\>=\>[c_{\alpha}g'(c_{\alpha}\eta)]\in \cM_\cE/q\cM_\cE^q
\]
by equation (\ref{Compd}). 
  
In case ii), we have that for $z\in \cO_L$,  $z=g(c_{\alpha} \xi^j)$ 
for some $c_{\alpha}\in K$ such that $vc_{\alpha}\xi^j>0$ and $g(X)\in \cO_K[X]$ and the
universal derivation $d_{\cO_L|\cO_K}$ is defined by 
\[
d_{\mathcal O_L|\cO_K}(z)\>=\>[c_{\alpha}g'(c_{\alpha}\xi^j)]\in  
I_\cE\cM_\cE/(I_\cE\cM_\cE)^p
\]
by equation (\ref{Compd}).

\bn
%
%
\section{K\"ahler differentials of towers of Galois extensions}     \label{secKahExact}
In this section, our goal is the proof of the following two theorems, which will be given
in Subsection~\ref{sectpfKahler}. We begin by  preparing the
ingredients for the proofs.

We first state the ``first fundamental exact sequence'' of K\"ahler differentials.
\begin{theorem}(\cite[Theorem 25.1]{Mat})                  \label{FES}
A composite $k\rightarrow A\rightarrow B$ of ring homomorphisms leads to a natural exact 
sequence of $B$-modules
$$
\Omega_{A|k}\otimes_AB\rightarrow \Omega_{B|k}\rightarrow\Omega_{B|A}\rightarrow 0.
$$ 
\end{theorem}

We will verify that in relevant situations, the left most homomorphism is injective, giving 
a short exact sequence. The following theorem is a consequence of the more general 
Theorem~6.3.32 of \cite{GR}. However, we will give an alternate proof in
Section~\ref{sectpfKahler}.

\begin{theorem}\label{GalTow}
Assume that $L|K$ and $M|L$ are towers of finite Galois extensions of valued fields. 
Then there is a natural short exact sequence
$$
0\rightarrow \Omega_{\cO_L|\cO_K}\otimes_{\cO_L}\cO_M\rightarrow \Omega_{\cO_M|\cO_K}
\rightarrow \Omega_{\cO_M|\cO_L}\rightarrow 0.
$$
In particular, $\Omega_{\cO_M|\cO_K}=0$ if and only if $\Omega_{\cO_M|\cO_L}=0$ and 
$\Omega_{\cO_L|\cO_K}=0$.
\end{theorem}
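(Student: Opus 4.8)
The plan is to start from the standard (second) fundamental exact sequence of Kähler differentials for the tower $\cO_K\subseteq\cO_L\subseteq\cO_M$, that is, the right-exact sequence
\[
\Omega_{\cO_L|\cO_K}\otimes_{\cO_L}\cO_M\xrightarrow{\ \alpha\ }\Omega_{\cO_M|\cO_K}\xrightarrow{\ \beta\ }\Omega_{\cO_M|\cO_L}\to 0,
\]
which is exact at its two right-hand terms for purely formal reasons (cf.\ the references used for (\ref{Kahlercomp})). Since $M|K$ is separable we have $\Omega_{M|K}=0$, so all three modules are torsion $\cO_M$-modules. The entire content of the theorem is therefore the injectivity of $\alpha$; once that is in hand, the ``in particular'' clause follows. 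Indeed, if both outer terms vanish then so does the middle by exactness, while conversely $\Omega_{\cO_M|\cO_K}=0$ forces $\Omega_{\cO_L|\cO_K}\otimes_{\cO_L}\cO_M=0$ and $\Omega_{\cO_M|\cO_L}=0$; and since $\cO_M$ is torsion-free, hence flat, over the valuation ring $\cO_L$ and satisfies $\cM_L\cO_M\subseteq\cM_M\subsetneq\cO_M$, it is faithfully flat, so $\Omega_{\cO_L|\cO_K}\otimes_{\cO_L}\cO_M=0$ already yields $\Omega_{\cO_L|\cO_K}=0$.

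The hard part is the injectivity of $\alpha$, which I would isolate in a single-step lemma: if $\cO_M=\cO_L[x]$ is monogenic with $x$ separable over $L$, so that $f'(x)$ is a nonzerodivisor in $\cO_M$ for $f$ the minimal polynomial, then $\alpha$ is injective. To prove it I would fix any presentation $\cO_L=\cO_K[\{y_j\}]/\mathfrak a$ and adjoin $X$ with the relation $f(X)$, giving $\Omega_{\cO_M|\cO_K}=\bigl(\bigoplus_j\cO_M\,dy_j\oplus\cO_M\,dX\bigr)/N$ with $N$ generated by the $dg$ ($g\in\mathfrak a$) and by $df=f'(x)\,dX+\sum_j b_j\,dy_j$. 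An element of $\ker\alpha$ is a combination $\sum_j c_j\,dy_j$ lying in $N$; comparing $dX$-coefficients forces the $\cO_M$-multiplier $\mu$ of $df$ to satisfy $\mu f'(x)=0$, hence $\mu=0$ by separability, and then $\sum_j c_j\,dy_j$ is already a combination of the $dg$, i.e.\ it is zero in $\Omega_{\cO_L|\cO_K}\otimes_{\cO_L}\cO_M$. This is the step where the ramification-controlled, separable structure of the extension enters decisively, the crucial input being that $\mathfrak a$ does not involve $X$.

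Next I would drop the monogenicity hypothesis. By the structure theory of Section~\ref{sectgen}, every prime-degree unibranched step (and the inertial step, via (\ref{(DL2a)})) is either monogenic or a filtered union $\bigcup_c\cO_L[c\,x]$ of monogenic subrings with separable generators (Theorem~\ref{thmgen}; the defect case is handled in \cite{CuKuRz}). Since $\Omega$, the tensor product, and exactness all commute with filtered colimits, the single-step lemma passes to these unions verbatim, so the short exact sequence holds for any such top step over an arbitrary base $\cO_K$.

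Finally I would splice the single-step result up an arbitrary finite Galois tower. After reducing to a henselian base (henselization is ind-étale, hence changes none of the relative differentials up to the faithfully flat base change $\otimes_{\cO_M}\cO_M^h$), I would invoke part~1) of Proposition~\ref{towerprop} to refine $K\subseteq M$ through $L$ into prime-degree Galois steps $\cO_K=\cO_{K_0}\subseteq\cdots\subseteq\cO_{K_\ell}=\cO_M$ with $\cO_L=\cO_{K_a}$. Writing $\cO_N$ for the ring one step below $\cO_M$, I would factor the left map of $(\cO_K,\cO_L,\cO_M)$ as
\[
\Omega_{\cO_L|\cO_K}\otimes_{\cO_L}\cO_M\to\Omega_{\cO_N|\cO_K}\otimes_{\cO_N}\cO_M\to\Omega_{\cO_M|\cO_K},
\]
where the first arrow is the inductive short exact sequence for $(\cO_K,\cO_L,\cO_N)$ tensored with the flat $\cO_L$-module $\cO_M$, and the second is the single-step lemma for $(\cO_K,\cO_N,\cO_M)$; both are injective, hence so is $\alpha$. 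Combined with the formal right-exactness, this gives the short exact sequence, and the ``in particular'' statement follows as above. I expect the genuine obstacles to be the monogenic kernel computation and the clean reduction to the henselian prime-degree situation, rather than any deeper homological subtlety.
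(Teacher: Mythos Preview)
Your proposal is correct and follows essentially the same strategy as the paper: reduce to the henselian situation by faithfully flat base change, refine the tower into inertial and prime-degree Galois steps whose valuation rings are monogenic or filtered unions of monogenic extensions (via Theorem~\ref{thmgen}, Theorem~\ref{seprfe}, and \cite{CuKuRz}), verify injectivity at each such step by the nonvanishing of the derivative of the separable minimal polynomial, and splice using flatness. The paper packages the monogenic/filtered-colimit injectivity into Theorem~\ref{KahCalc9}, refining the \emph{entire} tower from $K^h$ upward so as to work with explicit finitely presented approximations $A_{k_1,\ldots,k_i}$, whereas you use a universal presentation of $\cO_L$ over $\cO_K$ and only refine the top segment $L\subseteq M$; this is a legitimate simplification of the bookkeeping but the key mechanism is identical. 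One small slip: in your inductive factorization you need flatness of $\cO_M$ over $\cO_N$ (the ring one step below), not over $\cO_L$, but this holds for the same torsion-free-over-a-valuation-ring reason.
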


\begin{theorem}                                       \label{Kahler}
Let $(K,v)$ be a valued field. Then
\begin{enumerate}
\item[1)] $\Omega_{\cO_{K\sep}|\cO_K}=0$ if and only if $\Omega_{\cO_L|\cO_K}=0$
for all finite Galois subextensions $L|K$ of $K\sep$.
\item[2)] Let $L|K$ be a finite Galois subextension of $K\sep$ and assume that 
$$
K\subset K\ina=K_0\subset K_1\subset \cdots\subset K_\ell=L
$$
is a tower of field extensions factoring $L|K$ such that 
$K\ina$ is the inertia field of $(L|K,v)$ and $K_{i+1}|K_i$ is Galois of prime 
degree for all $i$. Then $\Omega_{\cO_L|\cO_K}=0$ if and only if 
$\Omega_{\cO_{K_{i+1}}|\cO_{K_i}}=0$ for $0\le i\le \ell-1$.
\end{enumerate}
\end{theorem}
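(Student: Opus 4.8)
The plan is to derive both parts formally from Theorem~\ref{GalTow}, the right-exact transitivity sequence for K\"ahler differentials, and the fact that $\Omega$ commutes with filtered colimits; the only genuinely new input is that the inertia field contributes nothing. For part 2), I would first dispose of the bottom step $K\ina|K$ by showing $\Omega_{\cO_{K\ina}|\cO_K}=0$, and then run Theorem~\ref{GalTow} up the tower $K\ina=K_0\subset K_1\subset\cdots\subset K_\ell=L$. For the latter, proceeding by induction on $\ell$ and applying the ``in particular'' clause of Theorem~\ref{GalTow} to the tower $K_0\subset\cdots\subset K_{\ell-1}$ together with the Galois step $K_{\ell-1}\subset K_\ell$, one obtains that $\Omega_{\cO_L|\cO_{K\ina}}=0$ if and only if $\Omega_{\cO_{K_{i+1}}|\cO_{K_i}}=0$ for all $0\le i\le\ell-1$. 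The transitivity sequence
\[
\Omega_{\cO_{K\ina}|\cO_K}\otimes_{\cO_{K\ina}}\cO_L\to\Omega_{\cO_L|\cO_K}\to\Omega_{\cO_L|\cO_{K\ina}}\to 0
\]
together with $\Omega_{\cO_{K\ina}|\cO_K}=0$ then shows $\Omega_{\cO_L|\cO_K}=0\Leftrightarrow\Omega_{\cO_L|\cO_{K\ina}}=0$, which combined with the previous equivalence yields part 2).

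For the vanishing $\Omega_{\cO_{K\ina}|\cO_K}=0$, write $K\dec$ for the decomposition field of the chosen extension of $v$, so that $K\subseteq K\dec\subseteq K\ina$. The extension $\cO_{K\dec}|\cO_K$ has trivial value-group and residue-field extension ($vK\dec=vK$ and $K\dec v=Kv$), whence $\cM_K\cO_{K\dec}=\cM_{K\dec}$ and the residue extension is (trivially) separable; thus $\cO_K\to\cO_{K\dec}$ is flat and unramified, i.e.\ \'etale, and $\Omega_{\cO_{K\dec}|\cO_K}=0$. Above $K\dec$ the valuation is unique, so $K\ina|K\dec$ is a unibranched inertial extension with $[K\ina:K\dec]=[K\ina v:K\dec v]$ and $K\ina v|K\dec v$ separable; Theorem~\ref{seprfe} gives $\Omega_{\cO_{K\ina}|\cO_{K\dec}}=0$. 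The transitivity sequence for $\cO_K\to\cO_{K\dec}\to\cO_{K\ina}$ then forces $\Omega_{\cO_{K\ina}|\cO_K}=0$. (When $K$ is henselian one has $K\dec=K$ and this is immediate from Theorem~\ref{seprfe}.)

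For part 1), fix the chosen extension of $v$ to $K\sep$ and write $\cO_{K\sep}=\varinjlim_M\cO_M$ as the directed union over finite Galois subextensions $M|K$. Since $\Omega$ commutes with filtered colimits, $\Omega_{\cO_{K\sep}|\cO_K}=\varinjlim_M\Omega_{\cO_M|\cO_K}$, which gives the implication ``all finite vanish $\Rightarrow$ separable closure vanishes'' at once. For the converse, suppose $\Omega_{\cO_{K\sep}|\cO_K}=0$ and fix a finite Galois $L|K$. For every finite Galois $M|K$ with $L\subseteq M$ (these are cofinal, and $M|L$ is then Galois), Theorem~\ref{GalTow} yields an injection $\Omega_{\cO_L|\cO_K}\otimes_{\cO_L}\cO_M\hookrightarrow\Omega_{\cO_M|\cO_K}$; passing to the filtered colimit over such $M$ (which preserves injections) gives $\Omega_{\cO_L|\cO_K}\otimes_{\cO_L}\cO_{K\sep}\hookrightarrow\Omega_{\cO_{K\sep}|\cO_K}=0$, so this tensor product vanishes. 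As $\cO_L\hookrightarrow\cO_{K\sep}$ is a flat local homomorphism of valuation rings, it is faithfully flat, and therefore $\Omega_{\cO_L|\cO_K}=0$.

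Granting Theorem~\ref{GalTow}, the argument is essentially formal, and I expect the main obstacle to be the unconditional vanishing $\Omega_{\cO_{K\ina}|\cO_K}=0$: since the decomposition field $K\dec|K$ need not be Galois, Theorem~\ref{GalTow} does not apply to it directly, and one must instead invoke the \'etaleness of $\cO_{K\dec}|\cO_K$ and the plain right-exact transitivity sequence. The remaining care is bookkeeping: checking that the indexing systems are filtered, that $\Omega$ and $\otimes$ commute with these colimits, and that $\cO_L\to\cO_{K\sep}$ is faithfully flat.
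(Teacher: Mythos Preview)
Your overall strategy matches the paper's, and Part 1) is correct and essentially identical to the paper's argument (phrased as ``filtered colimits preserve the injections coming from Theorem~\ref{GalTow}'' rather than the paper's contrapositive). For Part 2), your induction via Theorem~\ref{GalTow} is equivalent to the paper's direct use of Theorem~\ref{KahCalc9}, once $\Omega_{\cO_{K\ina}|\cO_K}=0$ is established.

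The gap is precisely where you anticipated it: your argument for $\Omega_{\cO_{K\dec}|\cO_K}=0$ does not work as written. The conditions $\cM_K\cO_{K\dec}=\cM_{K\dec}$ and trivial residue extension do \emph{not} imply that $\cO_K\to\cO_{K\dec}$ is \'etale, because the EGA notion of unramified requires finite presentation, which you have not verified and which is not automatic over a non-Noetherian valuation ring. At the level of the implication you are invoking, the completion $\Z_{(p)}\hookrightarrow\Z_p$ is a flat local map with $p\Z_p=\cM_{\Z_p}$ and identical residue field $\F_p$, yet $\Omega_{\Z_p|\Z_{(p)}}\otimes_{\Z_p}\Q_p\cong\Omega_{\Q_p|\Q}\ne 0$. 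The conclusion $\Omega_{\cO_{K\ina}|\cO_K}=0$ is nonetheless true, but it needs Raynaud's theorem that $\cO_{K\ina}$ is a localization of an \'etale $\cO_K$-algebra; this is Proposition~\ref{KahCalc30} in the paper (applied with $H$ the inertia group). If you prefer your factorization through $K\dec$, observe instead that $K\dec\subseteq K^h$ and invoke Lemma~\ref{KahCalc18} for the bottom step---your use of Theorem~\ref{seprfe} for the unibranched step $K\ina|K\dec$ is then fine---but Lemma~\ref{KahCalc18} itself is proved via Proposition~\ref{KahCalc30}, so either route requires this structural input.
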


\begin{lemma}                          \label{KahCalc36} 
Assume that $(L|K,v)$  is a valued field extension. Then 
$\cO_{L}$ is a faithfully flat $\cO_K$-module. 
\end{lemma}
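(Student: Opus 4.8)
If $(L|K,v)$ is a valued field extension, then $\cO_L$ is a faithfully flat $\cO_K$-module.

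The plan is to reduce faithful flatness to two elementary observations, using only that $\cO_K$ and $\cO_L$ are valuation rings sitting inside the field $L$. First I would record that the inclusion $\cO_K \hookrightarrow \cO_L$ is a \emph{local} homomorphism of local rings. Indeed, the unique maximal ideals are the valuation ideals $\cM_K$ and $\cM_L$, and since $v$ on $K$ is the restriction of $v$ on $L$, an element $a \in \cO_K$ has $va > 0$ computed in $K$ if and only if it does computed in $L$; hence $\cM_L \cap \cO_K = \cM_K$. In particular $\cM_K \cO_L \subseteq \cM_L \subsetneq \cO_L$, so that $\cO_L / \cM_K \cO_L \neq 0$.

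Next I would establish flatness. Because $\cO_K$ is a valuation ring, every finitely generated ideal of $\cO_K$ is principal, so by the ideal criterion for flatness it suffices to check that for each nonzero $a \in \cO_K$ the multiplication map $\cO_L \to \cO_L$, $x \mapsto ax$, is injective. This is clear, since $\cO_L$ is a subring of the field $L$ and $a \neq 0$. Equivalently, $\cO_L$ is a torsion-free module over the valuation ring $\cO_K$, and over a valuation ring the torsion-free modules are exactly the flat ones; either way $\cO_L$ is $\cO_K$-flat.

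Finally, faithful flatness is immediate: a flat module $M$ over a local ring with maximal ideal $\mathfrak{m}$ is faithfully flat as soon as $\mathfrak{m}M \neq M$, and by the first paragraph $\cM_K \cO_L \neq \cO_L$. Hence $\cO_L$ is a faithfully flat $\cO_K$-module. I do not expect any genuine obstacle here: the only substantive input is the ideal criterion specialized to a valuation ring, where finitely generated ideals are principal and torsion-freeness suffices for flatness; everything else is a direct verification. This faithful flatness is exactly the base-change tool needed to deduce the exactness statement in Theorem~\ref{GalTow}.
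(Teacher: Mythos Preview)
Your proof is correct and follows essentially the same route as the paper: both arguments obtain flatness from the fact that a torsion-free module over a valuation ring is flat, and then upgrade to faithful flatness via the condition $\cM_K\cO_L\ne\cO_L$. You supply a little more detail (spelling out the ideal criterion and the reason $\cM_K\cO_L\subseteq\cM_L$), but the structure is identical to the paper's proof.
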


\begin{proof}
We have that  $\cO_{L}$ is a flat $\cO_{K}$-module by \cite[Theorem 4.33]{Rot}
(see also \cite[Theorem 4.35]{Rot2}), since 
$\cO_{K}$  is a valuation ring and $\cO_{L}$ is a torsion free $\cO_K$-module. Further, 
$\cO_{L}$ is a faithfully flat $\cO_{K}$-module by Theorem 7.2 \cite{Mat}, since 
$\cM_{K}\cO_{L}\ne \cO_{L}$.
\end{proof}



\begin{lemma} \label{KahCalc9} 
Let $(L|K,v)$ be a finite valued field extension which is unibranched and such that 
there is a tower of field extensions $K=K_0\subset K_1\subset \cdots \subset K_\ell=L$ 
such that for $1\le i\le \ell$ one of the following holds:
\begin{enumerate}
\item[1)]  $K_i|K_{i-1} $ is Galois of prime degree or 
\item[2)]  $[K_i:K_{i-1}]=[K_iv:K_{i-1}v]$ and $K_iv$ is separable over $K_{i-1}v$.
\end{enumerate}
Then for $2\le i\le \ell$, we have natural short exact sequences
 \begin{equation}\label{KahCalc8}
 0\rightarrow (\Omega_{\cO_{K_{i-1}}|\cO_K})\otimes_{\cO_{K_{i-1}}}\cO_{K_i}
 \rightarrow \Omega_{\cO_{K_i}|\cO_K}\rightarrow \Omega_{\cO_{K_i}|\cO_{K_{i-1}}}
 \rightarrow 0.
 \end{equation} 
\end{lemma}
%
%
\begin{proof}  
By Theorem \ref{seprfe}, Theorem \ref{thmgen} for unibranched defectless extensions 
of prime degree and \cite[Lemma 2.3,  Lemma 3.1, Lemma 3.2 and Proposition 3.3]{CuKuRz} 
for extensions of prime degree with nontrivial defect
for $1\le i\le \ell$ there exist directed sets $S_i$ 
with associated $\alpha(i)_j\in K_i$ for $j\in S_i$ such that 
$\cO_{K_{i-1}}[\alpha(i)_j]\subset \cO_{K_{i-1}}[\alpha(i)_k]$ if $j\le k$ and
$\cO_{K_i}=\cup_{j\in S_i}\cO_{K_{i-1}}[\alpha(i)_j]$. Further, $\cO_{K_i}[\alpha(i)_j]
\cong \cO_{K_i}[X]/(f_i^j(X))$ where $f_i^j(X)$ is the minimal polynomial $\alpha(i)_j$ 
over $K_{i-1}$.

Let $T_i$ be the set of $(k_1,k_2,\ldots,k_{i-1},k_i)\in S_1\times S_2\times\cdots\times
S_i$ such that 
$f_n^{k_n}(x)\in \cO_K[\alpha(1)_{k_1},\alpha(2)_{k_2},\ldots,\alpha(n-1)_{k_{n-1}}][x]$
for $2\le n\le i$. We define a partial order on $T_i$ by the rule $(k_1,\ldots,k_i)\le 
(l_1,\ldots,l_i)$ if $k_m\le l_m$ for $1\le m\le i$. The $T_i$ are directed sets since 
the $S_i$ are, and setting 
\[
A_{k_1,\ldots,k_i}\>=\>\cO_K[\alpha(1)_{k_1},\alpha(2)_{k_2},\ldots,
\alpha(i-1)_{k_{i-1}},\alpha(i)_{k_i}]
\]
for $(k_1,\ldots,k_i)\in T_i$, we have inclusions 
\[
A_{k_1,\ldots,k_i}\subset A_{l_1,
\ldots,l_i} \mbox{ for } (k_1,\ldots,k_i)\le (l_1,l_2,\ldots,l_i) \mbox{ in $T_i$.}
\]

By our construction, for $2\le m\le i$, there exist 
$$
g_m^{k_m}(X_1,\ldots, X_{m-1},X_m)\in \cO_K[X_1,X_2,\ldots,X_{m-1},X_m]
$$
such that $g_m^{k_m}(\alpha(1)_{k_1},\cdots ,\alpha(m-1)_{k_{m-1}},X_m)=f_m^{k_m}(X_m)$.
 
By \cite[Theorem 1]{HTT}, we have that
$$
A_{k_1,\ldots,k_i}\cong \cO_K[X_1,\ldots,X_i]/(g_1^{k_1}(X_1), g_2^{k_2}(X_1,X_2),
\ldots,g_i^{k_i}(X_1,\ldots,X_i)).
$$

By \cite[Theorem 25.2]{Mat}, $\Omega_{A_{k_1,\ldots,k_i}|\cO_K}\cong
(A_{k_1,\ldots,k_i}dX_1\oplus\cdots\oplus A_{k_1,\ldots,k_i}dX_i)/U_{k_1,\ldots,k_i}$, 
where $U_{k_1,\ldots,k_i}$ is the $A_{k_1,\ldots,k_i}$-submodule of 
 $A_{k_1,\ldots,k_i}dX_1\oplus\cdots\oplus A_{k_1,\ldots,k_i}dX_i$ generated  by
 \begin{equation}\label{KahCalc1}
  \left[\frac{\partial f_1^{k_1}}{\partial X_1}(\alpha(1)_{k_1})\right]dX_1
  \end{equation}
and
 \begin{equation}\label{KahCalc2} 
 \left[\frac{\partial g_m^{k_m}}{\partial X_1}(\alpha(1)_{k_1},\ldots,\alpha(m)_{k_m}
 )\right]dX_1 
 +\cdots+\left[\frac{\partial g_m^{k_m}}{\partial X_m}(\alpha(1)_{k_1},\ldots,
 \alpha(m)_{k_m})\right]dX_m
  \end{equation}
for $2\le m\le i$. We further have that 
 \begin{equation}\label{KahCalc3}
 \left[\frac{\partial f_1^{k_1}}{\partial X_1}(\alpha(1)_{k_1})
 \right]=(f_1^{k_1})'(\alpha(1)_{k_1})
 \end{equation}
and 
 \begin{equation}\label{KahCalc4}
 \left[\frac{\partial g_m^{k_m}}{\partial X_m}(\alpha(1)_{k_1},\ldots,\alpha(m)_{k_m})
 \right]=(f_m^{k_m})'(\alpha(m)_{k_m})
 \end{equation}
for $2\le m\le i$.

By Theorem \ref{FES}, we have a natural exact sequence of $A_{k_1,\ldots,
k_{i}}$-modules
  \begin{equation}\label{KahCalc10}
 \Omega_{A_{k_1,\ldots,k_{i-1}}|\cO_K}\otimes_{A_{k_1,\ldots,k_{i-1}}} A_{k_1,\ldots,k_i}
 \rightarrow
  \Omega_{A_{k_1,\ldots,k_{i}}|\cO_K}\rightarrow
  \Omega_{A_{k_1,\ldots,k_{i}}|  A_{k_1,\ldots,k_{i-1}}}\rightarrow 0.
   \end{equation}
   
For $(k_1,\ldots,k_i)\in T_i$, let 
 $$
 L_{k_1,\ldots,k_i}=\Omega_{A_{k_1,\ldots,k_{i-1}}|\cO_K}\otimes_{A_{k_1,\ldots,k_{i-1}}}
 \cO_{K_i},
 $$
  $$
  M_{k_1,\ldots,k_i}=\Omega_{A_{k_1,\ldots,k_{i}}|\cO_K}\otimes_{A_{k_1,\ldots,k_{i}}}
  \cO_{K_i},
  $$
  $$
  N_{k_1,\ldots,k_i}= \Omega_{A_{k_1,\ldots,k_{i}}|  A_{k_1,\ldots,k_{i-1}}}
  \otimes_{A_{k_1,\ldots,k_{i}}} \cO_{K_i}.
  $$

Applying the right exact functor $\otimes_{A_{k_1,\ldots,k_{i}}}\cO_{K_i}$ to
(\ref{KahCalc10}), we have an exact sequence of  $\cO_{K_i}$-modules
  \begin{equation}\label{KahCalc5}
  L_{k_1,\ldots,k_i}\stackrel{u}{\rightarrow} M_{k_1,\ldots,k_i}\rightarrow N_{k_1,
  \ldots,k_i}\rightarrow 0.
     \end{equation}

Now $\Omega_{A_{k_1,\ldots,k_{i-1}}|\cO_K}\otimes_{A_{k_1,\ldots,k_{i-1}}} \cO_{K_i}$ 
is the quotient of $\cO_{K_i}dX_1\oplus\cdots \oplus\, \cO_{K_i}dX_{i-1}$ by the
relations (\ref{KahCalc1}) and (\ref{KahCalc2}) for $2\le m\le i-1$ and 
$\Omega_{A_{k_1,\ldots,k_{i}}|\cO_{K}}\otimes_{A_{k_1,\ldots,k_{i}}} \cO_{K_i}$ is the
quotient of $\cO_{K_i}dX_1\oplus\cdots \oplus \,\cO_{K_i}dX_{i}$ by the relations
(\ref{KahCalc1}) and (\ref{KahCalc2}) for $2\le m\le i$. Since 
$(f_i^{k_i})'(\alpha(i)_{k_i})\ne 0$ (as $K_i$ is separable over $K_{i-1}$) we have by
(\ref{KahCalc4}) with $m=i$ that $u$ is injective,
so that (\ref{KahCalc5}) is actually short exact.

Let $(k_1,\ldots,k_i)$ and $(l_1,\ldots,l_i)$ in $T_i$ be such that $(k_1,\ldots,k_i)
\le (l_1,\ldots,l_i)$.  Then we have a natural commutative diagram  of 
$\cO_{K_i}$-modules with short exact rows
 \begin{equation}\label{KahCalc6}
 \begin{array}{ccccccccc}
 0&\rightarrow &L_{k_1,\ldots,k_i}&\rightarrow&
 M_{k_1,\ldots,k_i}&\rightarrow&
 N_{k_1,\ldots,k_i} &   \rightarrow& 0\\
 &&\downarrow&&\downarrow&&\downarrow\\
 0&\rightarrow &L_{l_1,\ldots,l_i}&\rightarrow&
 M_{l_1,\ldots,l_i}&\rightarrow&
 N_{l_1,\ldots,l_i} &   \rightarrow& 0  \end{array}
   \end{equation}
where  the vertical arrows are the natural maps determined by the differentials of the
inclusions of 
 $A_{k_1,\ldots,k_{i-1}}$ into $A_{l_1,\ldots,l_{i-1}}$ and of $A_{k_1,\ldots,k_{i}}$ 
 into $A_{l_1,\ldots,l_i}$.
By \cite[Theorem 2.18]{Rot} (see also \cite[Proposition 5.33]{Rot2}), we have a short 
exact sequence of $\cO_K$-modules
\begin{equation}\label{KahCalc7}
0\rightarrow \lim_{\rightarrow}L_{k_1,\ldots,k_i}\rightarrow
 \lim_{\rightarrow}M_{k_1,\ldots,k_i}\rightarrow
 \lim_{\rightarrow}N_{k_1,\ldots,k_i}    \rightarrow 0.
 \end{equation}
 
 By our construction of $T_i$, we have that $\cup A_{k_1,\ldots,k_i}=\cO_{K_i}$, where the
 union is over all $(k_1,\ldots,k_i)\in T_i$. Thus $\displaystyle\lim_{\rightarrow}
 M_{k_1,\ldots,k_i}
 \cong \Omega_{\cO_{K_i}|\cO_K}$ by \cite[Theorem 16.8]{Eis}.
 We also have that $\cup A_{k_1,\ldots,k_{i-1}}=\cO_{K_{i-1}}$, where the union is over 
 all $(k_1,\ldots,k_{i-1})$ such that $(k_1,\ldots,k_i)\in T_i$.  Thus 
 $$
 \lim_{\rightarrow}\left(\Omega_{A_{k_1,\ldots,k_{i-1}}|\cO_K}\otimes_{A_{k_1,
 \ldots,k_{i-1}}}\cO_{K_{i-1}}\right) 
 \cong \Omega_{\cO_{K_{i-1}}|\cO_K}
 $$
 again by \cite[Theorem 16.8]{Eis}. Now 
 $$
 \begin{array}{lll}
 \lim_{\rightarrow} L_{k_1,\ldots,k_i}&=&
 \lim_{\rightarrow}\left( \Omega_{A_{k_1,\ldots,k_{i-1}}|\cO_K}\otimes_{A_{k_1,
 \ldots,k_{i-1}}} \cO_{K_i}\right)\\
 &\cong&
 \lim_{\rightarrow}\left((\Omega_{A_{k_1,\ldots,k_{i-1}}|\cO_K}\otimes_{A_{k_1,
 \ldots,k_{i-1}}}\cO_{K_{i-1}})\otimes_{\cO_{K_{i-1}}} \cO_{K_i}\right)\\
 &\cong& \left(\lim_{\rightarrow}(\Omega_{A_{k_1,\ldots,k_{i-1}}|\cO_K}\otimes_{A_{k_1,
 \ldots,k_{i-1}}}\cO_{K_{i-1}})\right)\otimes_{\cO_{K_{i-1}}} \cO_{K_i}\\

 &\cong&\Omega_{\cO_{K_{i-1}}|\cO_K}\otimes_{\cO_{K_{i-1}}}\cO_{K_i}
   \end{array} 
 $$ 
 where the equality of the third row is by \cite[Corollary 2.20]{Rot}.
 
 We have that 
 $$
 A_{k_1,\ldots,k_i}=A_{k_1,\ldots,k_{i-1}}[\alpha(i)_{k_i}]\cong A_{k_1,\ldots,k_{i-1}}
 [X_i]/(f_i^{k_i}),
 $$
 so
 $$
 \Omega_{A_{k_1,\ldots,k_{i}}|  A_{k_1,\ldots,k_{i-1}}}
 \cong \left(A_{k_1,\ldots,k_i}/(f_i^{k_i})'(\alpha(i)_{k_i})\right)dX_i
 $$
  by equation (\ref{Kahlercomp}). Since $f_i^{k_i}$ is the minimal polynomial of 
  $\alpha(i)_{k_i}$ over $K_{i-1}$, we have that 
  $$
  \Omega_{\cO_{K_{i-1}}[\alpha(i)_{k_i}]|\cO_{K_{i-1}}}\cong
  \cO_{K_{i-1}}[\alpha(i)_{k_i}]/((f_i^{k_i})'(\alpha(i)_{k_i}))dX_i
  $$
  also by  (\ref{Kahlercomp}).
  Thus 
 $$
 \begin{array}{lll}
 N_{k_1,\ldots,k_i}&=&\Omega_{A_{k_1,\ldots,k_{i}}|  A_{k_1,\ldots,k_{i-1}}}
 \otimes_{A_{k_1,\ldots,k_{i}}} \cO_{K_i}
 \cong \left(\cO_{K_i}/(f_i^{k_i})'(\alpha(i)_{k_i})\right)dX_i\\
 &\cong&   \left(\Omega_{\cO_{K_{i-1}}[\alpha(i)_{k_i}]|\cO_{K_{i-1}}}\right)
 \otimes_{\cO_{K_{i-1}[\alpha(i)_{k_i}]}}\cO_{K_i}.
 \end{array}
  $$ 
  
Since $\cup \cO_{K_{i-1}}[\alpha(i)_{k_i}]=\cO_{K_i}$,  we have that
$\displaystyle\lim_{\rightarrow}N_{k_1,\ldots,k_i}\cong \Omega_{\cO_{K_i}|\cO_{K_{i-1}}}$ 
by \cite[Theorem 16.8]{Eis}.
  
In conclusion, for $1\le i\le r$, the sequence (\ref{KahCalc8}) is isomorphic to the 
short exact sequence (\ref{KahCalc7}).
\end{proof}

In Definition 2 of Chapter I, page 11 \cite{Ra}, an \'etale algebra is defined. Let $A$ be 
a ring and $B$ be an $A$-algebra. $B$ is \'etale over $A$ if
\begin{enumerate}
\item[1)] $B$ is an $A$-algebra of finite presentation and
\item[2)] For all $A$-algebra $D$ and ideals $J$ of $D$ such that $J^2=0$, the natural map 
$\mbox{Hom}_{A\mbox{-alg}}(B,D)\rightarrow \mbox{Hom}_{A\mbox{-alg}}(B,D/J)$ is a bijection.
\end{enumerate}
In Definition IV.17.3.1 \cite{EGA}, an \'etale morphism of schemes is defined. After the
definition, it is shown that a morphism of affine schemes $\mbox{Spec}(B)\rightarrow 
\mbox{Spec}(A)$ is \'etale if and only if $B$ is  \'etale over $A$.

\begin{proposition}                         \label{KahCalc30} 
Let $(L|K,v)$ be a finite Galois extension of valued fields. Let $G$ be the Galois group 
of $L|K$ and let $H$ be a subgroup of $G$ which contains the inertia group of $L|K$. 
Denote the fixed field of $H$ in $L$ by $L_0\,$. Then $\Omega_{\cO_{L_0}|\cO_K}=0$.
\end{proposition}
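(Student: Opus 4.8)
The plan is to first translate the group-theoretic hypothesis into a statement about ramification, and then to reduce to the henselian situation, where Theorem~\ref{seprfe} applies directly. Since $H$ contains the inertia group of $(L|K,v)$, its fixed field $L_0$ is contained in the inertia field, $L_0\subseteq K\ina=(L|K,v)\ina$. By the standard properties of the inertia field (cf.\ \cite[Section 19]{En}), this gives $vL_0=vK$ and a separable residue field extension $L_0v|Kv$; in other words, $(L_0|K,v)$ is unramified. I would stress at the outset that we need \emph{not} have $[L_0:K]=[L_0v:Kv]$, because the decomposition field may contribute a nontrivial immediate factor, so Theorem~\ref{seprfe} cannot be applied to $(L_0|K,v)$ directly.

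To remove this obstruction I would pass to the henselization. Let $(K^h,v)$ be the henselization of $(K,v)$ with respect to the fixed extension $w$ of $v$, and set $L_0^h:=L_0K^h$, a henselization of $(L_0,w)$. By the base change formula for K\"ahler differentials (cf.\ \cite[Theorem~25.1]{Mat}),
\[
\Omega_{\cO_{L_0}|\cO_K}\otimes_{\cO_K}\cO_{K^h}\;\cong\;\Omega_{\cO_{L_0}\otimes_{\cO_K}\cO_{K^h}\,|\,\cO_{K^h}}\>.
\]
Since $\cO_{K^h}$ is faithfully flat over $\cO_K$ by Lemma~\ref{KahCalc36}, it suffices to show that the right-hand side vanishes. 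Over the henselian field $K^h$ the extension $L_0^h|K^h$ is again unramified, and henselicity forces it to be defectless with $[L_0^h:K^h]=[(L_0^h)v:Kv]=[L_0v:Kv]$ and separable residue field extension. Hence Theorem~\ref{seprfe} yields $\cO_{L_0^h}=\cO_{K^h}[x]$ for a lift $x$ of a residue generator, and $\Omega_{\cO_{L_0^h}|\cO_{K^h}}=0$.

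The step that then remains is the identification
\[
\cO_{L_0}\otimes_{\cO_K}\cO_{K^h}\;\cong\;\cO_{L_0^h}\>,
\]
which, combined with the display above, gives $\Omega_{\cO_{L_0}|\cO_K}\otimes_{\cO_K}\cO_{K^h}\cong\Omega_{\cO_{L_0^h}|\cO_{K^h}}=0$, and therefore $\Omega_{\cO_{L_0}|\cO_K}=0$ by faithful flatness. This identification is where the unramifiedness, and in particular the defectlessness, of $L_0|K$ is essential: $\cO_{L_0}$ is the localization, at the place $w$, of the integral closure of $\cO_K$ in $L_0$, and precisely because the extension is defectless, tensoring with $\cO_{K^h}$ reproduces the valuation ring of the unique extension of $w$ to $L_0^h$ (in the presence of defect this compatibility of valuation rings with henselization can fail). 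I expect that verifying this tensor-product identification, rather than the two essentially formal applications of Theorem~\ref{seprfe} and Lemma~\ref{KahCalc36}, will be the delicate point of the argument.
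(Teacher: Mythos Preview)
Your strategy of passing to the henselization and invoking Theorem~\ref{seprfe} is reasonable, but the identification $\cO_{L_0}\otimes_{\cO_K}\cO_{K^h}\cong\cO_{L_0^h}$ that you single out as ``the delicate point'' is in fact \emph{false}, precisely in the situation you worried about where the decomposition field contributes nontrivially. Take $K=\Q$ with the $5$-adic valuation and $L=L_0=\Q(i)$ with the $(2+i)$-adic extension $w$; the inertia group is trivial, so $H=\{1\}$ is admissible. Writing $\cO_{L_0}=\Z[i]_{(2+i)}=\Z_{(5)}[i][\tfrac{1}{2-i}]$ and using that $X^2+1$ splits in $\cO_{K^h}$, one finds
\[
\cO_{L_0}\otimes_{\cO_K}\cO_{K^h}\;\cong\;\bigl(\cO_{K^h}\times\cO_{K^h}\bigr)\!\left[\tfrac{1}{2-i}\right]\;\cong\;K^h\times\cO_{K^h}\>,
\]
since $2-i$ becomes a uniformizer in one factor and a unit in the other. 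But $L_0\subset K^h$, so $\cO_{L_0^h}=\cO_{K^h}$, and the two rings are not isomorphic: the tensor product remembers the second place of $L_0$ above~$5$.

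Curiously, $\Omega_{K^h\times\cO_{K^h}\,|\,\cO_{K^h}}=0$ anyway (it is a localization of the \'etale $\cO_{K^h}$-algebra $\cO_{K^h}\times\cO_{K^h}$), so your conclusion survives this example. To make the argument go through in general, however, you would need to know that $\cO_{L_0}\otimes_{\cO_K}\cO_{K^h}$ is always a localization of a finite \'etale $\cO_{K^h}$-algebra; by flat base change this amounts to knowing that $\cO_{L_0}$ itself is locally \'etale over $\cO_K$. That is exactly the content of the result the paper cites: by \cite[Theorem~1 of Chapter~X, p.~103]{Ra} one has $\cO_{L_0}=B_n$ for some \'etale $\cO_K$-algebra $B$ and maximal ideal $n$, whence $\Omega_{\cO_{L_0}|\cO_K}\cong(\Omega_{B|\cO_K})\otimes_B B_n=0$ in one line. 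So once repaired, your route collapses to the paper's direct appeal to Raynaud. (A small side remark: the base-change formula $\Omega_{A|R}\otimes_R S\cong\Omega_{A\otimes_R S\,|\,S}$ is correct, but it is not \cite[Theorem~25.1]{Mat}, which is the first fundamental exact sequence.)
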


\begin{proof}  
Let $A=\cO_K$, $C$ be the integral closure of $A$ in $\cO_L$ and $B=C^H$ be the integral 
closure of $A$ in $L_0=L^H$. There exists a maximal ideal $r$ of $C$ such that $C_r=\cO_L$. 
Let $n=r\cap B$, the maximal ideal of $B$, so that $\cO_{L_0}=B_n$. By Theorem 1 of 
Chapter X, page 103 \cite{Ra}, there exists $f\in B\setminus n$ such that $B'=B_f$ is an 
\'etale $A$-algebra. We have that $(B')_{n_f}=\cO_{L_0}$. $\mbox{Spec}(B')\rightarrow 
\mbox{Spec}(A)$ is an \'etale morphism, so the map is formally unramified (Definition 
IV.17.1.1 \cite{EGA}). Thus $\Omega_{B'|A}=0$ by Proposition IV.17.2.1 \cite{EGA}. Thus 
$0=(\Omega_{B'|A})\otimes_{B'}(B')_{n_f}=\Omega_{\cO_{L_0}|\cO_K}$ by 
\cite[Proposition 16.9]{Eis}.
\end{proof}

\begin{proposition}                        \label{KahCalc20*} 
Assume that $(L|K,v)$ is a finite Galois extension. Then
 $$
 \Omega_{\cO_{L}|\cO_K}\cong \Omega_{\cO_L|\cO_{K\ina}}
 $$
where $K\ina$ is the inertia field of $(L|K,v)$.
\end{proposition}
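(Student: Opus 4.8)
The plan is to apply the second fundamental exact sequence of Kähler differentials to the tower of valuation rings $\cO_K\subseteq\cO_{K\ina}\subseteq\cO_L$ and then to annihilate its leftmost term by means of Proposition~\ref{KahCalc30}. This keeps the argument short because all of the real content is pushed into the vanishing statement already proved there.

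First I would invoke \cite[Theorem 25.1]{Mat} (the same tool that produced the sequence (\ref{KahCalc10}) above), which applied to the ring homomorphisms $\cO_K\to\cO_{K\ina}\to\cO_L$ yields the right-exact sequence
\[
\Omega_{\cO_{K\ina}|\cO_K}\otimes_{\cO_{K\ina}}\cO_L \xrightarrow{\;\alpha\;} \Omega_{\cO_L|\cO_K} \xrightarrow{\;\beta\;} \Omega_{\cO_L|\cO_{K\ina}} \to 0,
\]
in which $\beta$ is the canonical map $da\mapsto da$. This step is purely formal: it requires no Galois or flatness hypothesis on the extension $K\ina|K$, so in particular I do not need the more delicate left-exactness that was established for special towers in Theorem~\ref{KahCalc9}. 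The point worth flagging is exactly that the right-exact cotangent sequence genuinely suffices here, provided the source of $\alpha$ can be shown to vanish.

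Next I would apply Proposition~\ref{KahCalc30} with $H$ taken to be the inertia group of $(L|K,v)$ itself, which trivially contains the inertia group. Its fixed field $L_0$ is then precisely the inertia field $K\ina$, and the proposition gives $\Omega_{\cO_{K\ina}|\cO_K}=0$. Consequently the source of $\alpha$, namely $\Omega_{\cO_{K\ina}|\cO_K}\otimes_{\cO_{K\ina}}\cO_L$, is zero, so the image of $\alpha$ is zero and $\beta$ is injective. Being also surjective by right-exactness, $\beta$ is the asserted isomorphism $\Omega_{\cO_L|\cO_K}\cong\Omega_{\cO_L|\cO_{K\ina}}$.

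I do not expect a serious obstacle in this proof itself: the genuine difficulty has been externalized into Proposition~\ref{KahCalc30}, whose proof rests on the étale structure of the inertia field via Raynaud's theorem. Granting that input, the only thing to verify carefully is the bookkeeping that the vanishing of the left term of the cotangent sequence does the work of the left-exactness that we deliberately avoid proving in this generality.
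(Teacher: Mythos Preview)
Your proof is correct and matches the paper's own proof essentially verbatim: the paper simply writes ``This follows from Proposition~\ref{KahCalc30} and the exact sequence of \cite[Theorem 25.1]{Mat},'' and you have spelled out precisely those two steps.
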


\begin{proof} This follows from Proposition \ref{KahCalc30} and the exact 
sequence of Theorem \ref{FES}.
\end{proof}

\sn

We now give the proof of Theorem \ref{ThmI1}.
Let $p$ be the characteristic of the residue field $Kv$ 
and $q=[L:K]$ a prime number. The description of $\Omega_{\cO_L|\cO_K}$ 
and the characterization of vanishing of this module depend, among other information, 
on the invariants of the valued field extension that appear in the following product: 
$$
q\>=\>[L:K]\>=\>\rmd(L|K)\,\rme(L|K)\,\rmf(L|K)\,\rmg(L|K)
$$
where $\rme(L|K)=(vL:vK)$, $\rmf(L|K)=[Lv:Kv]$, $\rmg(L|K)$ is the number of distinct
extensions of $v|K$ to $L$ and $\rmd(L|K)$ is the defect of the extension, which is a 
power of $p$. Since $q$ is a prime, exactly one of the factors will be equal to $q$, 
and the others equal to $1$.
The description of $\Omega_{\cO_L|\cO_K}$ also depends on the rank and the structure of 
the value group of $(K,v)$ if $\rmd(L|K)\ne 1$ or $\rme(L|K)\ne 1$,
and on whether $Lv|Kv$ is separable or inseparable if $\rmf(L|K)\ne 1$.

In the case of $\rmd(L|K)=p$, our results are proven in \cite[Theorem 1.2]{CuKuRz}. 
In the case of $\rme(L|K)=q$, they are obtained in Theorem \ref{OASe} for Artin-Schreier
extensions and Theorem \ref{OKume} for Kummer extensions. If $\rmf(L|K)=q$, then they 
are obtained in  Theorems \ref{seprfe} and \ref{OASf} for Artin-Schreier extensions and
Theorems \ref{seprfe} and \ref{OKumf} for Kummer extensions. 

In the remaining case when $\rmg(L|K)=q$, the extension $(L|K,v)$ is an inertial 
extension. Thus $\Omega_{\cO_L|\cO_K}=0$ by Proposition \ref{KahCalc30}.

\sn
%
%
\subsection{Henselization}                      \label{secthens}
\mbox{ }\sn
We now recall some facts about henselization of fields and rings. 
A valued field $(K,v)$ is \bfind{henselian} if it satisfies Hensel's Lemma, or 
equivalently, all of its algebraic extensions are unibranched
(cf.\ \cite[Section 16]{En}). 

An extension $(K^h,v^h)$ of a valued field $(K,v)$ is called a \bfind{henselization} 
of $(K,v)$ 
if $(K^h,v^h)$ is henselian and for all henselian valued fields $(L,\omega)$ and all 
embeddings $\lambda:(K,v)\rightarrow (L,\omega)$, there exists a unique embedding 
$\tilde\lambda:(K^h,v^h)\rightarrow (L,\omega)$ which extends $\lambda$.

A henselization $(K^h,v^h)$ of $(K,v)$ can be constructed by choosing an extension $v^s$ 
of $v$ to a separable closure $K\sep$ of $K$ and letting $K^h$ be the fixed field of the
decomposition group  
$$
G^d(K\sep|K)=\{\sigma\in G(K\sep|K)\mid v^s\circ\sigma=v^s\}
$$
of $v^s$, and defining $v^h$ to be the restriction of $v^s$ to $K^h$ 
(\cite[Theorem 17.11]{En}).  The valuation ring $\mathcal O_{K^h}$ of $v^H$ is then 
\begin{equation}\label{eqN1}
\cO_{K^h}=\cO_{v^s}\cap K^h=\tilde A_{\tilde m}
\end{equation}
where $\tilde A$ is the integral closure of $\cO_v$ in $K^h$ and $\tilde m=\mathcal 
M_{K^{\rm sep}}\cap K^h$.

The definition of a henselian local ring is given in Definition 1, Chapter I, page 1 
of \cite{Ra}. A local ring $A$ is henselian if all finite $A$-algebras $B$ are a product 
of local rings. 

Assume that $A$ is a local ring and $g(X)\in A[X]$ is a polynomial. Let $\bar 
g(X)\in A/m_A[X]$ be the polynomial obtained by reducing the coefficients of $g(X)$ 
modulo $m_A$. 

By Proposition 5, Chapter I, page  2 \cite{Ra}, a local ring $A$ is a henselian local ring 
if and only if it has the following property: Let $f(X)\in
A[X]$ be a monic polynomial of degree $n$. If $\alpha(X)$ and $\beta(X)$ are relatively
prime monic polynomials in $A/m_A[X]$ of degrees $r$ and $n-r$ respectively such that 
$\bar f(X)=\alpha(X)\beta(X)$, then there exist monic polynomials $g(X)$ and 
$h(X)$ in $A[X]$ of degrees $r$ and $n-r$ respectively such that $\bar g(X)=
\alpha(X)$, $\bar h(X)=\beta(X)$ and $f(X)=g(X)h(X)$.

Henselization of a local ring is defined in Definition 1, Chapter VIII, page 80 \cite{Ra}.
If $A$ is a local ring, a local ring $A^h$ which dominates $A$ is called a henselization 
of $A$ if any local homomorphism from $A$ to a henselian local ring can be uniquely 
extended to $A^h$.  A henselization always exists, as is shown in 
\cite[Theorem 1, Chapter VIII, page 87]{Ra}. The 
construction is particularly nice when $A$ is a normal local ring, as shown in 
\cite[Theorem 2, Chapter X, page 110]{Ra} (cf.\ \cite[Theorem 43.5]{Na}).
We now explain this construction. Let $K$ be the 
quotient field of $A$ and Let $K\sep$ be a separable closure of $K$. Let $\bar A$ 
be the integral closure of $A$ in $K\sep$ and let $\bar m$ be a maximal ideal of 
$\bar A$.

Let $H$ be the decomposition group
$$
H=G^d(\bar A_{\bar m}|A)=\{\sigma\in G(K\sep|K)\mid \sigma(\bar 
A_{\bar m})=\bar A_{\bar m}\}.
$$
Then
\begin{equation}\label{eqN2}
A^h=(\tilde A)_{\bar m\cap\tilde A}
\end{equation}
where $\tilde A$ is the integral closure of $A$ in $(K^{\rm sep})^H$.

\begin{lemma}                                  \label{Lemma3} 
Assume that $(K,v)$ is a valued field and $(K^h,v^h)$ is a henselization of $K$.
 Then there is a natural isomorphism
$$
\cO_{K^h}\cong \cO_{K}^h.
$$
\end{lemma}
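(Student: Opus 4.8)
The plan is to show that, once both constructions recalled above are carried out with respect to a single chosen extension of the valuation, the two rings coincide literally as subrings of $K\sep$, so that the asserted ``natural isomorphism'' is in fact a canonical identification. Set $A:=\cO_K$; being a valuation ring it is a normal domain with quotient field $K$, so the construction of the local henselization $A^h=\cO_K^h$ described before the lemma applies.

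First I would fix the extension $v^s$ of $v$ to $K\sep$ that was used to build the valued-field henselization $(K^h,v^h)$, with valuation ring $\cO_{K\sep}$ and valuation ideal $\cM_{K\sep}$. Let $\overline A$ be the integral closure of $A$ in $K\sep$ and put $\overline m:=\cM_{K\sep}\cap\overline A$, the center of $v^s$ on $\overline A$. Here I would invoke the standard fact of valuation theory that, for the algebraic extension $K\sep|K$, the localizations of $\overline A$ at its maximal ideals are exactly the valuation rings of $K\sep$ lying over $\cO_K$; in particular $\overline A_{\overline m}=\cO_{K\sep}$.

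Next I would match the two decomposition groups. Since a valuation of $K\sep$ is determined by its valuation ring, for $\sigma\in G(K\sep|K)$ we have $\sigma(\overline A_{\overline m})=\overline A_{\overline m}$ if and only if $\sigma(\cO_{K\sep})=\cO_{K\sep}$, i.e.\ $v^s\circ\sigma=v^s$. Hence
$$
H\>:=\>G^d(\overline A_{\overline m}|A)\>=\>\{\sigma\in G(K\sep|K)\mid v^s\circ\sigma=v^s\}\>=\>G^d(K\sep|K)\>,
$$
so the fixed field $K^h=\Fix(H)$ appearing in the local construction of $A^h$ is exactly the field $K^h$ underlying the valued-field henselization, with $v^h=v^s|_{K^h}$.

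Finally I would compare the rings themselves. On the one hand, $\cO_{K^h}$ consists of those $x\in K^h$ with $v^h(x)\ge 0$, equivalently $v^s(x)\ge 0$, so $\cO_{K^h}=K^h\cap\cO_{K\sep}=K^h\cap\overline A_{\overline m}$. On the other hand, the displayed formula for the local henselization gives $\cO_K^h=A^h=\overline A_{\overline m}\cap K^h$. Therefore $\cO_{K^h}=\cO_K^h$ as subrings of $K^h$, which is the desired identification; its independence of the auxiliary choices of $v^s$ and $\overline m$, hence naturality, follows from the uniqueness up to canonical isomorphism of henselizations. The only steps requiring genuine care are the matching of the two decomposition-group definitions and the identification $\overline A_{\overline m}=\cO_{K\sep}$; once these are in place the equality of the two rings is immediate.
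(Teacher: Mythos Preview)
Your proof is correct and follows essentially the same approach as the paper: fix an extension $v^s$ of $v$ to $K\sep$, take the integral closure $\overline A$ of $\cO_K$ in $K\sep$, identify $\overline A_{\overline m}=\cO_{K\sep}$ for $\overline m=\cM_{K\sep}\cap\overline A$, match the two decomposition groups, and conclude that both rings equal $K^h\cap\cO_{K\sep}$. The paper cites \cite{ZS2} for the identification $\overline A_{\overline m}=\cO_{K\sep}$ and for the equality of the decomposition-group definitions, exactly the two points you flagged as requiring care.
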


\begin{proof} Let $v^s$ be an extension of $v$ to $K\sep$ and 
$$
H=\{\sigma\in {\rm Gal}(K\sep|K)\mid v^s\circ\sigma=v^s\},
$$
so that $K^h$ is the fixed field of $H$ in $K^{\rm sep}$. Let $\bar V$ be the integral 
closure of $\cO_K$ in $K\sep$,
and let $m=\bar V\cap \cM_{K\sep}$, a maximal ideal in $\bar V$. Since
$K\sep$  is algebraic over $K$, we have that $\cO_{K\sep}=\bar V_m$ by \cite[Theorem
12, page 27]{ZS2}. Now, as is shown on the bottom of page 68 of \cite{ZS2}, $H$ is the
decomposition group
$$
H=G^d(\cO_{K\sep}|\cO_K)=\{\sigma\in G(K\sep|K)\mid \sigma(\cO_{K\sep})=\cO_{K\sep}\},
$$
so that 
$$
\cO_K^h=\cO_{K^h}
$$
by (\ref{eqN1}) and (\ref{eqN2}),
establishing the lemma. 
\end{proof}

\begin{lemma}                            \label{KahCalc18} 
Let $K$ be a valued field and $L$ be a field such that $K\subset L\subset K^h$. Then 
$\Omega_{\cO_{L}|\cO_K}=0$.
\end{lemma}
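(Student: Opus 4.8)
The plan is to reduce to the case of a finite subextension and then invoke Proposition~\ref{KahCalc30}. First I would write $L$ as the filtered union of its finite subextensions $L'|K$ with $K\subseteq L'\subseteq L$. Since $L\subseteq K^h\subseteq K\sep$, the extension $L|K$ is separable-algebraic, so each $L'$ is finite separable over $K$, and $\cO_L=\bigcup_{L'}\cO_{L'}$ is the filtered union of the $\cO_K$-subalgebras $\cO_{L'}=\cO_L\cap L'$. Because the module of Kähler differentials commutes with direct limits (as used already in this paper via \cite[Theorem 16.8]{Eis}), we get $\Omega_{\cO_L|\cO_K}\cong\varinjlim_{L'}\Omega_{\cO_{L'}|\cO_K}$. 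Hence it suffices to prove $\Omega_{\cO_{L'}|\cO_K}=0$ for every finite subextension $L'$ of $K^h$.

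Fix such an $L'$. As $L'|K$ is finite and separable, there is a finite Galois extension $M|K$ with $L'\subseteq M$, and I take the extension of $v$ to $M$ induced by the extension $v^s$ to $K\sep$ used to construct $K^h$. The central step is to realize $L'$ as the fixed field of a subgroup of $G:=\Gal M|K$ that contains the inertia group. Recall that $K^h=\Fix(G^d(K\sep|K))$ is the decomposition field. Restricting to $M$ identifies $M\cap K^h$ with the decomposition field of $(M|K,v)$, that is, with $\Fix(G^d(M|K))$: indeed every $\sigma\in G^d(K\sep|K)$ restricts to an element of $G^d(M|K)$ and every element of $G^d(M|K)$ so arises, so an element of $M$ is fixed by $G^d(K\sep|K)$ exactly when it is fixed by $G^d(M|K)$. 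Since $L'\subseteq K^h$ and $L'\subseteq M$, this gives $L'\subseteq M\cap K^h=\Fix(G^d(M|K))$, whence $H:=\Gal M|L'$ contains $G^d(M|K)$ and therefore also the inertia group $G^i(M|K)\subseteq G^d(M|K)$.

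Now applying Proposition~\ref{KahCalc30} with the finite Galois extension $(M|K,v)$, the group $G=\Gal M|K$, the subgroup $H=\Gal M|L'$ containing the inertia group, and the fixed field $L'=\Fix(H)$, we conclude $\Omega_{\cO_{L'}|\cO_K}=0$. Combined with the limit formula from the first paragraph, this yields $\Omega_{\cO_L|\cO_K}=0$. I do not anticipate a genuine obstacle here: the substantive content—that passing to the fixed field of a group containing the inertia group annihilates the relative Kähler differentials over $\cO_K$—is already packaged in Proposition~\ref{KahCalc30}, and the only delicate point is the routine identification $M\cap K^h=\Fix(G^d(M|K))$ of the decomposition field, which is standard ramification-theoretic bookkeeping.
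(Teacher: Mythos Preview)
Your proof is correct and follows essentially the same approach as the paper: both reduce to finite subextensions via direct limits, place each such subextension inside the decomposition field of a finite Galois extension $M|K$, and then invoke Proposition~\ref{KahCalc30}. The only difference is organizational (the paper first exhibits $K^h$ as the union of the decomposition fields $M_i^{G^d(M_i|K)}$ and then intersects with $L$, while you start from finite pieces of $L$ and embed outward), and the paper spells out the surjectivity of $G^d(K\sep|K)\to G^d(M|K)$ that you cite as standard.
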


\begin{proof} 
Let $v^s$ be an extension of $v$ to $K^{\rm sep}$. The field $K^{\rm sep}$ is the directed 
union $K^{\rm sep}=\cup_iM_i$ of the finite Galois extensions $M_i$ of $K$ in $K^{\rm sep}$.  
If $M$ is a finite Galois extension of $K$ in $K^{\rm sep}$, then restriction induces a
surjection of Galois groups $G(K^{\rm sep}|K)\rightarrow G(M|K)$, and an isomorphism 
$G(M|K)\cong G(K^{\rm sep}|K)/G(K^{\rm sep}|M)$. We have an isomorphism of profinite 
groups (\cite[Example 1, page 271]{NE} or \cite[Theorem VI.14.1, page 313]{[L]})
$$
G(K^{\rm sep}|K)\cong \lim_{\leftarrow}G(M_i|K).
$$
Let $G^d(M|K)$ be the decomposition group of the valued field extension $M|K$, for $M$ a
Galois extension of $K$ which is contained in $K^{\rm sep}$ (where the valuation of $M$ 
is $v^s|M$). For $M$ a finite Galois
extension of $K$, restriction induces a homomorphism
\begin{equation}\label{eqDH}
G^d(K^{\rm sep}|K)\rightarrow G^d(M|K).
\end{equation} 
Let $\sigma\in G^d(M|K)$. If $N$ is a finite Galois extension of $M$ contained in 
$K^{\rm sep}$, then there exists $\bar\sigma\in G(N|K)$ such that $\bar\sigma
|_M=\sigma$. Let $A$ be the integral closure of $\cO_M$ in $N$. There exists a maximal 
ideal $p$ of $A$ such that $A_p\cong \cO_N$. Let $q=\bar\sigma(p)$, a maximal ideal 
of $A$. The group $G(N|M)$ acts transitively on the maximal ideals of $A$ 
(\cite[Lemma 21.8]{AG}) so there exists $\tau\in G(N|M)$ such that $\tau(q)=p$. Thus 
$\tau\bar\sigma(\cO_N)=\cO_N$ and $\tau\bar\sigma|_M=\sigma$ and so the 
homomorphism (\ref{eqDH}) is surjective with Kernel $G^d(K^{\rm sep}|K)\cap 
G(K^{\rm sep}|M)$. We have that 
$$
K^h=(K^{\rm sep})^{G^d(K^{\rm sep}|K)}=\cup M_i^{G^d(M_i|K)}.
$$
Thus 
$$
L=L\cap(\cup_iM_i^{G^d(M_i|K)})=\cup L_i
$$
where $L_i=L\cap M_i^{G^d(M_i|K)}$. We have that $\Omega_{\cO_{L_i}|\cO_K}=0$ for all 
$i$ by Proposition \ref{KahCalc30}. Thus 
$$
\Omega_{\cO_L|\cO_K}=\lim_{\rightarrow}(\Omega_{\cO_{L_i}|\cO_K}\otimes_{\cO_{L_i}}\cO_L)
=0
$$
by \cite[Theorem 16.8]{Eis}.
\end{proof}


Let $K$ be a valued field. Fix an extension $v^s$ of $v$ to the separable closure 
$K\sep$ of $K$. The field $K\sep$ is henselian (for instance by the construction before
Lemma \ref{Lemma3}); that is,  the henselization $(K\sep)^h=K\sep$ and $\cO_{(K\sep)^h}
=\cO_{K\sep}$.

\begin{proposition}                                   \label{KahCalc17} 
Let $(K,v)$ be a valued field. Then 
$\Omega_{\cO_{K\sep}|\cO_K}\cong \Omega_{\cO_{K\sep}|\cO_{K^h}}$.
\end{proposition}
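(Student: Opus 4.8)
The plan is to apply the first fundamental exact sequence of K\"ahler differentials (\cite[Theorem 25.1]{Mat}) to the tower of ring inclusions $\cO_K\subset\cO_{K^h}\subset\cO_{K\sep}$ coming from $K\subset K^h\subset K\sep$. This yields a natural exact sequence of $\cO_{K\sep}$-modules
$$
\Omega_{\cO_{K^h}|\cO_K}\otimes_{\cO_{K^h}}\cO_{K\sep}
\longrightarrow\Omega_{\cO_{K\sep}|\cO_K}
\longrightarrow\Omega_{\cO_{K\sep}|\cO_{K^h}}\longrightarrow 0,
$$
in which the right-hand arrow is the canonical surjection. Everything then reduces to controlling the left-most term.

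The key observation is that this left-most term vanishes. Indeed, applying Lemma~\ref{KahCalc18} to the (trivial) chain $K\subset K^h\subset K^h$, i.e.\ with $L=K^h$, gives $\Omega_{\cO_{K^h}|\cO_K}=0$; hence also $\Omega_{\cO_{K^h}|\cO_K}\otimes_{\cO_{K^h}}\cO_{K\sep}=0$. Conceptually, this is the statement that the valuation ring extension $\cO_{K^h}|\cO_K$ contributes no differentials, which is exactly what Lemma~\ref{KahCalc18} records for any intermediate field between $K$ and its henselization.

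With the first term equal to $0$, exactness of the sequence forces the canonical map $\Omega_{\cO_{K\sep}|\cO_K}\to\Omega_{\cO_{K\sep}|\cO_{K^h}}$ to have trivial kernel; since it is already surjective, it is an isomorphism, which is precisely the assertion. I do not expect any real obstacle here: once the vanishing of $\Omega_{\cO_{K^h}|\cO_K}$ supplied by Lemma~\ref{KahCalc18} is in place, the conclusion is a formal consequence of the functoriality of K\"ahler differentials along the tower, and requires no computation specific to the valuation.
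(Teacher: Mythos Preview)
Your proof is correct and is essentially identical to the paper's own argument: the paper also applies \cite[Theorem 25.1]{Mat} to the tower $\cO_K\subset\cO_{K^h}\subset\cO_{K\sep}$ and then invokes Lemma~\ref{KahCalc18} to kill the left-hand term. There is nothing to add.
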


\begin{proof} We may embed $K^h$ into  $K\sep$ (by the construction before Lemma
\ref{Lemma3}) giving a tower of valued field extensions $K\subset K^h\subset K\sep$. By
Theorem \ref{FES}, we have an exact sequence $\Omega_{\cO_{K^h}|\cO_K}
\otimes_{\cO_{K^h}}\cO_{K\sep}\rightarrow \Omega_{\cO_{K\sep}|\cO_K}\rightarrow
\Omega_{\cO_{K\sep}|\cO_{K^h}}\rightarrow 0$. The proposition now follows from Lemma
\ref{KahCalc18}.
\end{proof}

\begin{lemma}                               \label{LemmaRNs} 
Assume that  $(L|K,v)$ is a finite separable extension 
of valued fields. Then 
 \begin{equation}                              \label{KahCalc33}
 \Omega_{\cO_L^h|\cO_K^h}\cong (\Omega_{\cO_L|\cO_K})\otimes_{\cO_L}\cO_{L^h}\>.
 \end{equation}
In particular, by Lemma \ref{KahCalc36}, we have that $\Omega_{\cO_L|\cO_K}=0$ if and 
only if $\Omega_{\cO_{L^h}|\cO_{K^h}}=0$.
\end{lemma}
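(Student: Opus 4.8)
The plan is to reduce the stated isomorphism to a base-change statement and an \'etale-invariance statement, each obtained from the first fundamental exact sequence for K\"ahler differentials, and then to read off the ``in particular'' clause from faithful flatness. By Lemma~\ref{Lemma3} I may identify $\cO_K^h=\cO_{K^h}$ and $\cO_L^h=\cO_{L^h}$, so that (\ref{KahCalc33}) becomes $\Omega_{\cO_{L^h}|\cO_{K^h}}\cong\Omega_{\cO_L|\cO_K}\otimes_{\cO_L}\cO_{L^h}$. Since $L|K$ is separable I would fix an extension of $v$ to $K\sep$ and realise $K\subset L\subset L^h$ and $K\subset K^h\subset L^h$ inside $K\sep$, the inclusion $\cO_{K^h}\subseteq\cO_{L^h}$ coming from the universal property of the henselization applied to the henselian field $L^h\supseteq K$. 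The goal then splits into the two isomorphisms
\[
\Omega_{\cO_{L^h}|\cO_K}\cong\Omega_{\cO_{L^h}|\cO_{K^h}}\qquad\text{and}\qquad\Omega_{\cO_{L^h}|\cO_K}\cong\Omega_{\cO_L|\cO_K}\otimes_{\cO_L}\cO_{L^h}.
\]

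The first isomorphism is the easy one. Applying the first fundamental exact sequence \cite[Theorem~25.1]{Mat} to the tower $\cO_K\subset\cO_{K^h}\subset\cO_{L^h}$ produces the exact sequence $\Omega_{\cO_{K^h}|\cO_K}\otimes_{\cO_{K^h}}\cO_{L^h}\to\Omega_{\cO_{L^h}|\cO_K}\to\Omega_{\cO_{L^h}|\cO_{K^h}}\to 0$, and Lemma~\ref{KahCalc18} (with $L=K^h$) annihilates the leftmost term, so the remaining arrow is an isomorphism.

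The main obstacle is the second isomorphism, a base-change formula along the (generally non-\'etale) extension $\cO_K\to\cO_L$ after henselizing. I would prove it by exploiting that the henselization $\cO_L\to\cO_{L^h}$ is a filtered union of locally \'etale maps. As in the proof of Lemma~\ref{KahCalc18}, but now over the base $L$, write $L^h=\bigcup_i L_i$ where each $L_i$ is the fixed field in a finite Galois extension $M_i|L$ of the decomposition group $G^d(M_i|L)$; since this group contains the inertia group, Proposition~\ref{KahCalc30} gives $\Omega_{\cO_{L_i}|\cO_L}=0$, and its proof exhibits $\cO_{L_i}$ as a localization of an \'etale $\cO_L$-algebra, so $\cO_L\to\cO_{L_i}$ is formally \'etale and $\cO_{L^h}=\bigcup_i\cO_{L_i}$. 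For each $i$ the natural map $\Omega_{\cO_L|\cO_K}\otimes_{\cO_L}\cO_{L_i}\to\Omega_{\cO_{L_i}|\cO_K}$ is then an isomorphism, by the base-change property of differentials along a (formally) \'etale morphism, equivalently by the left exactness and splitting of the first fundamental sequence $0\to\Omega_{\cO_L|\cO_K}\otimes_{\cO_L}\cO_{L_i}\to\Omega_{\cO_{L_i}|\cO_K}\to\Omega_{\cO_{L_i}|\cO_L}\to 0$ whose last term vanishes (cf.\ \cite{EGA}). Passing to the direct limit over $i$ and using that $\Omega$ commutes with direct limits \cite[Theorem~16.8]{Eis}, as does the tensor product, I obtain the desired base-change isomorphism. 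The delicate point is precisely the left exactness of the fundamental sequence, available only because each step $\cO_L\to\cO_{L_i}$ is formally smooth; this is what forces the detour through the finite decomposition fields rather than a single application of the sequence to $\cO_K\subset\cO_L\subset\cO_{L^h}$.

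Combining the two isomorphisms yields (\ref{KahCalc33}). For the final equivalence I would invoke Lemma~\ref{KahCalc36} applied to the extension $(L^h|L,v)$: the $\cO_L$-module $\cO_{L^h}$ is faithfully flat, hence $\Omega_{\cO_L|\cO_K}\otimes_{\cO_L}\cO_{L^h}=0$ if and only if $\Omega_{\cO_L|\cO_K}=0$, and by (\ref{KahCalc33}) the left-hand module is $\Omega_{\cO_{L^h}|\cO_{K^h}}$. This gives $\Omega_{\cO_L|\cO_K}=0$ if and only if $\Omega_{\cO_{L^h}|\cO_{K^h}}=0$.
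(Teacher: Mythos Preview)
Your proof is correct and follows essentially the same strategy as the paper: both arguments reduce (\ref{KahCalc33}) to the isomorphism $\Omega_{\cO_{L^h}|\cO_K}\cong\Omega_{\cO_{L^h}|\cO_{K^h}}$ via Lemma~\ref{KahCalc18} and \cite[Theorem~25.1]{Mat}, then obtain the base-change isomorphism $\Omega_{\cO_{L^h}|\cO_K}\cong\Omega_{\cO_L|\cO_K}\otimes_{\cO_L}\cO_{L^h}$ by writing $\cO_{L^h}$ as a filtered direct limit of locally \'etale $\cO_L$-algebras, applying the formally smooth case of the first fundamental sequence at each stage (your reference to \cite{EGA} corresponds to the paper's use of \cite[Propositions~IV.17.2.1 and~IV.17.2.3]{EGA}), and passing to the limit via \cite[Theorem~16.8]{Eis}. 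The only cosmetic difference is that the paper invokes \cite[Theorem~1, page~87]{Ra} to produce the \'etale direct system abstractly, whereas you build it explicitly from decomposition fields $L_i=M_i^{G^d(M_i|L)}$ and then appeal to the proof of Proposition~\ref{KahCalc30} (which itself rests on Raynaud's theorem) to see that each $\cO_{L_i}$ is locally \'etale over $\cO_L$; this amounts to the same thing.
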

 
\begin{proof}
 We have that 
 \begin{equation}\label{eqRN1}
 \Omega_{\cO_{L^h}|\cO_{K^h}}\cong \Omega_{\cO_{L^h}|\cO_K}
 \end{equation}
by Lemma \ref{KahCalc18} and the exact sequence of Theorem \ref{FES}. By \cite[Theorem 1,
 page 87]{Ra}, there exist \'etale extensions $A_i|\cO_L$ and maximal ideals
 $m_i$ of $A_i$ such that $\cO_{L^h}=\displaystyle\lim_{\rightarrow}(A_i)_{m_i}$.  
 We have the exact sequences
 $$
 \Omega_{\cO_L|\cO_K}\otimes_{\cO_L}A_i\stackrel{\alpha}{\rightarrow} \Omega_{A_i|\cO_K}
 \rightarrow \Omega_{A_i|\cO_L}\rightarrow 0
 $$
 of Theorem \ref{FES}. Since $A_i|\cO_L$ is \'etale, we have that this 
 map is formally \'etale (\cite[Definition IV.17.3.1]{EGA}) and is thus formally 
 unramified and formally smooth (\cite[Definition IV.17.1.1]{EGA}). Thus 
 $\Omega_{A_i|\cO_L}=0$ by 
 \cite[Proposition IV.17.2.1]{EGA} and $\alpha$ is injective by \cite[Proposition 
 IV.17.2.3]{EGA}. By this calculation and \cite[Proposition~16.9]{Eis},
 \begin{equation}\label{eqRN2}
 \Omega_{\cO_L|\cO_K}\otimes_{\cO_L}(A_i)_{m_i}\cong (\Omega_{A_i|\cO_K})\otimes_{A_i}
 (A_i)_{m_i}\cong \Omega_{(A_i)_{m_i}|\cO_K}.
 \end{equation}
 By Theorem 16.8 \cite{Eis} and  equations (\ref{eqRN1}) and (\ref{eqRN2}),
\begin{equation}                               \label{KahCalc32}
\begin{array}{lll}
\Omega_{\cO_L^h|\cO_K^h}&\cong& \Omega_{\cO_{L^h}|\cO_K}\cong
\displaystyle\lim_{\rightarrow}[(\Omega_{(A_i)_{m_i}|\cO_K})\otimes_
{(A_i)_{m_i}}\cO_{L^h}]\\
 & \cong &\displaystyle\lim_{\rightarrow}[(\Omega_{\cO_L|\cO_K})
\otimes_{\cO_L}\cO_{L^h}]\cong (\Omega_{\cO_L|\cO_K})\otimes_{\cO_L}\cO_{L^h}.
\end{array}
\end{equation}
\end{proof}
  
%
%
\subsection{Proofs of Theorems \ref{GalTow} and \ref{Kahler}}  \label{sectpfKahler}
\mbox{ }\sn
We first prove Theorem \ref{GalTow}.

The natural sequence of $\cO_M$-modules
\begin{equation}\label{eqGT1}
0\rightarrow \Omega_{\cO_L|\cO_K}\otimes_{\cO_L}\cO_M\rightarrow \Omega_{\cO_M|\cO_K}
\rightarrow \Omega_{\cO_M|\cO_L}\rightarrow 0
\end{equation}
computed from the extensions of rings $\cO_K\subset \cO_L\subset \cO_M$
is right exact (but the first map might not be injective) by Theorem \ref{FES}.
Tensor this sequence with $\cO_M^h$ over $\cO_M$ to get a right exact sequence of 
$\cO_M^h$-modules
\begin{equation}\label{eqGT2}
0\rightarrow (\Omega_{\cO_L|\cO_K}\otimes_{\cO_L}\cO_M)\otimes_{\cO_M}\cO_M^h\rightarrow
\Omega_{\cO_M|\cO_K}\otimes_{\cO_M}\cO_M^h\rightarrow \Omega_{\cO_M|\cO_L}\otimes_{\cO_M}
\cO_M^h\rightarrow 0.
\end{equation}
By (\ref{KahCalc33}), we have isomorphisms
$$
\Omega_{\cO_M|\cO_L}\otimes_{\cO_M}\cO_M^h\cong \Omega_{\cO_M^h|\cO_L^h},\,\,
\Omega_{\cO_M|\cO_K}\otimes_{\cO_M}\cO_M^h\cong \Omega_{\cO_M^h|\cO_K^h}
$$
and
$$
\begin{array}{l}
(\Omega_{\cO_L|\cO_K}\otimes_{\cO_L}\cO_M)\otimes_{\cO_M}\cO_M^h
\cong \Omega_{\cO_L|\cO_K}\otimes_{\cO_L}\cO_M^h\\
\cong (\Omega_{\cO_L|\cO_K}\otimes_{\cO_L}\cO_L^h)\otimes_{\cO_L^h}\cO_M^h\cong
\Omega_{\cO_{L^h}|\cO_{K^h}}\otimes_{\cO_L^h}\cO_M^h.
\end{array}
$$
Thus (\ref{eqGT2}) is the right exact sequence
\begin{equation}\label{eqGT3}
0\rightarrow \Omega_{\cO_{L^h}|\cO_{K^h}}\otimes_{\cO_{L^h}}\cO_{M^h}\rightarrow
\Omega_{\cO_{M^h}|\cO_{K^h}}\rightarrow \Omega_{\cO_{M^h}|\cO_{L^h}}\rightarrow 0
\end{equation}
of  Theorem \ref{FES}. Since $\cO_M^h$ is a faithfully flat $\cO_M$-module, we 
have that
(\ref{eqGT1}) is exact if and only if (\ref{eqGT3}) is exact.

By assumption, $L|K$ and $M|L$ are towers of Galois extensions
$$
K=K_0\subset K_1\subset \cdots\subset K_r=L\mbox{ and }
L=L_0\subset L_1\subset \cdots \subset L_s=M
$$
so 
$$
K^h=K_0^h\subset K_1^h\subset \cdots \subset K_r^h=L^h\mbox{ and }
L^h=L_0^h\subset L_1^h\subset \cdots \subset L_s^h=M^h
$$
are towers of Galois extensions. Since each $K_{i+1}^h|K_i^h$ is unibranched,
there exist factorizations
$$
K_i^h\subset U_i^1\subset U_i^2\subset \cdots \subset U_i^{m_i}=K_{i+1}^h
$$
where $U_i^1$ is the inertia field of $K_{i+1}^h|K_i^h$ and $U_i^{j+1}|U_i^j$ is Galois 
of prime degree. These extensions are all necessarily unibranched, so $U_i^1|K_i^h$ 
satisfies 2) of Lemma \ref{KahCalc9} and $U_i^{j+1}|U_i^j$ satisfies 1) of Lemma
\ref{KahCalc9} for $1\le j$. Similarly, we have
factorizations
$$
L_i^h\subset V_i^1\subset V_i^2\subset \cdots \subset V_i^{n_i}=L_{i+1}^h
$$
where  $V_i^1|L_i^h$ satisfies 2) of Lemma \ref{KahCalc9} and $V_i^{j+1}|V_i^j$ satisfies 1) 
of Lemma \ref{KahCalc9} for $1\le j$. By Lemma \ref{KahCalc9}, we have exact sequences
$$
\begin{array}{c}
0\rightarrow \Omega_{\cO_{U^1_0}|\cO_{K^h}}\otimes_{\cO_{U_0^1}}\cO_{U_0^2}
\rightarrow \Omega_{\cO_{U_0^2}|\cO_{K^h}}\rightarrow \Omega_{\cO_{U_0^2}|\cO_{U_0^1}}
\rightarrow 0\\
0\rightarrow \Omega_{\cO_{U^2_0}|\cO_{K^h}}\otimes_{\cO_{U_0^2}}\cO_{U_0^3}
\rightarrow \Omega_{\cO_{U_0^3}|\cO_{K^h}}\rightarrow \Omega_{\cO_{U_0^3}|\cO_{U_0^2}}
\rightarrow 0\\
\vdots\\
0\rightarrow \Omega_{\cO_{K_1^h}|\cO_{K^h}}\otimes_{\cO_{K_1^h}}\cO_{U_1^1}
\rightarrow \Omega_{\cO_{U_1^1}|\cO_{K^h}}\rightarrow \Omega_{\cO_{U_1^1}|\cO_{K_1^h}}
\rightarrow 0\\
0\rightarrow \Omega_{\cO_{U^1_1}|\cO_{K^h}}\otimes_{\cO_{U_1^1}}\cO_{U_1^2}
\rightarrow \Omega_{\cO_{U_1^2}|\cO_{K^h}}\rightarrow \Omega_{\cO_{U_1^2}|\cO_{U_1^1}}
\rightarrow 0\\
\vdots\\
0\rightarrow \Omega_{\cO_{L^h}|\cO_{K^h}}\otimes_{\cO_{L^h}}\cO_{V_0^1}
\rightarrow \Omega_{\cO_{V_0^1}|\cO_{K^h}}\rightarrow \Omega_{\cO_{V_0^1}|\cO_{L^h}}
\rightarrow 0\\
0\rightarrow \Omega_{\cO_{V_0^1}|\cO_{K^h}}\otimes_{\cO_{V_0^1}}\cO_{V_0^2}
\rightarrow \Omega_{\cO_{V_0^2}|\cO_{K^h}}\rightarrow \Omega_{\cO_{V_0^2}|\cO_{V_0^1}}
\rightarrow 0\\
\vdots\\
0\rightarrow \Omega_{\cO_{L_1^h}|\cO_{K^h}}\otimes_{\cO_{L_1^h}}\cO_{V_1^1}
\rightarrow \Omega_{\cO_{V_1^1}|\cO_{K^h}}\rightarrow \Omega_{\cO_{V_1^1}|\cO_{L_1^h}}
\rightarrow 0\\
0\rightarrow \Omega_{\cO_{V_1^1}|\cO_{K^h}}\otimes_{\cO_{V_1^1}}\cO_{V_1^2}
\rightarrow \Omega_{\cO_{V_1^2}|\cO_{K^h}}\rightarrow \Omega_{\cO_{V_1^2}|\cO_{V_1^1}}
\rightarrow 0\\
\vdots\\
0\rightarrow \Omega_{\cO_{V_{s-1}^{n_s-1}}|\cO_{K^h}}\otimes_{\cO_{V_{s-1}^{n_s-1}}}\cO_{M^h}
\rightarrow \Omega_{\cO_{M^h}|\cO_{K^h}}\rightarrow \Omega_{\cO_{M^h}|\cO_{V_{s-1}^{n_s-1}}}
\rightarrow 0.\\
\end{array}
$$
In particular, differentiation defines an injection of $\cO_{V_0^1}$-modules
$$
\Omega_{\cO_{L^h}|\cO_{K^h}}\otimes_{\cO_{L^h}\cO_{V_0^1}}\rightarrow 
\Omega_{\cO_{V_0^1|\cO_K^h}}.
$$
Since $\cO_{V_0^2}$ is a flat $\cO_{V_0^1}$-module, we have injections
$$
\Omega_{\cO_{L^h}|\cO_{K^h}}\otimes_{\cO_{L^h}}\cO_{V_0^2}\cong
(\Omega_{\cO_{L^h}|\cO_{K^h}}\otimes_{\cO_{L^h}}\cO_{V_0^1})\otimes_{\cO_{V_0^1}}\cO_{V_0^2}
\rightarrow \Omega_{\cO_{V_0^1}|\cO_{K^h}}\otimes_{\cO_{V_0^1}}\cO_{V_0^2}
\rightarrow \Omega_{\cO_{V_0^2}|\cO_K^h}
$$
and continuing, we obtain that differentiation gives an injection of $\cO_{M^h}$-modules
$$
\Omega_{\cO_{L^h}|\cO_{K^h}}\otimes_{\cO_{L^h}}\cO_{M^h}\rightarrow
\Omega_{\cO_{M^h}|\cO_{K^h}}
$$
so that (\ref{eqGT3}) is short exact and thus (\ref{eqGT1}) is short exact. 

Since $\cO_M$ is a faithfully flat $\cO_L$, module, we have that $\Omega_{\cO_L|\cO_K}
\otimes_{\cO_L}\cO_M=0$ if and only if $\Omega_{\cO_L|\cO_K}=0$, and so 
$\Omega_{\cO_M|\cO_K}=0$ if and only if $\Omega_{\cO_M|\cO_L}=0$ and 
$\Omega_{\cO_L|\cO_K}=0$.
\qed

\parm
We now prove Theorem \ref{Kahler}.
We first prove Statement 1). By \cite[Theorem 16.8]{Eis}, we have an  isomorphism of 
$\cO_{K\sep}$-modules
$$
\Omega_{\cO_{K\sep}|\cO_K}
\cong\lim_{\rightarrow}[(\Omega_{\cO_L|\cO_K})\otimes_{\cO_L}\cO_{K\sep}].
$$
where the limit is over finite Galois subextensions $L|K$ of $K\sep$. 

If $\Omega_{\cO_L|\cO_K}=0$ for all finite Galois subextensions of $K^{\rm sep}$, then 
it follows immediately from the above formula that $\Omega_{\cO_{K^{\rm sep}}|\cO_K}=0$.

Assume that $\Omega_{\cO_{K^{\rm sep}}|\cO_K}=0$ and $L|K$ is a finite Galois 
subextension of $K^{\rm sep}$. If $\Omega_{\cO_L|\cO_K}\ne 0$, then there exists $0\ne x\in
\Omega_{\cO_L|\cO_K}$ and a 
finite Galois extension $N$ of $K$ such that $N$ contains $L$ and the image of $x\otimes 1$ 
by the natural homomorphism
$$
(\Omega_{\cO_L|\cO_K})\otimes_{\cO_L}\cO_{K^{\rm sep}}\rightarrow  
(\Omega_{\cO_N|\cO_K})\otimes_{\cO_N}\cO_{K^{\rm sep}}
$$
 is zero. 
Since $\cO_{K^{\rm sep}}$ is a faithfully flat $\cO_N$-module 
(by Lemma~\ref{KahCalc36}) we have that the image of $x\otimes 1$ by the natural homomorphism
$$
\Omega_{\cO_L|\cO_K}
\otimes_{\cO_L}\cO_N\rightarrow \Omega_{\cO_N|\cO_K}
$$
 is zero, so that $x\otimes 1=0$ by Theorem \ref{GalTow}. Thus $x=0$ since 
$\cO_N$ is a faithfully flat $\cO_L$-module, giving a contradiction, and showing that 
$\Omega_{\cO_L|\cO_K}=0$.

We now prove Statement 2). We have that  $\Omega_{\cO_{L}|\cO_K}\cong \Omega_{\cO_{L}
|\cO_{K\ina}}$ by  Proposition~\ref{KahCalc20*}.   For $0\le i\le\ell-1$, 
$\Omega_{\cO_{K_i}|\cO_{K_0}}=0$ if and only if $\Omega_{\cO_{K_i}|\cO_{K_0}}
\otimes_{\cO_{K_i}}\cO_{K_{i+1}}=0$  since 
$\cO_{K_{i+1}}$ is a faithfully flat $\cO_{K_i}$-module by  Lemma \ref{KahCalc36}. 
Statement 2) now follows from Lemma \ref{KahCalc9} by induction on $i$ in equation
(\ref{KahCalc8}).
\qed

%

%
%
\section{Proof of Theorems~\ref{GRthm} and~\ref{GRthm+}}   \label{sectprf}
Take a valued field $(K,v)$ and extend $v$ to the separable closure $K\sep$ of $K$.
Recall that we call $(K,v)$ a deeply ramified field if it satisfies (DRvg) and (DRvr). 

Throughout we assume that $\chara Kv=p>0$. If $\chara K=0$, then we set $K':=
K(\zeta_p)$ with $\zeta_p$ a primitive $p$-th root of unity and extend $v$ to $K'$.
If $\chara K=p$, then we set $K':=K$. The next proposition will show that in our proof 
of Theorem~\ref{GRthm} we can assume that $K=K'$.
\begin{proposition}                            \label{K'}
1) If $\Omega_{\cO_{K\sep}|\cO_K}=0$, then $\Omega_{\cO_L|\cO_{K'}}=0$ holds for 
every finite Galois extension $(L|K',v)$.
\sn
2) If $(K',v)$ is a deeply ramified field, then so is $(K,v)$.
\end{proposition}
\begin{proof}
1): Assume that $\Omega_{\cO_{K\sep}|\cO_K}=0$. By part 1) of Theorem~\ref{Kahler} 
this implies that $\Omega_{\cO_L|\cO_K}=0$ for every finite Galois extension $(L|K,v)$. 
In all cases, $(K'|K,v)$ is a finite Galois extension, possibly trivial. 
Take any finite Galois extension 
$(L|K',v)$, let $N$ be the normal hull of $L|K$, and take any extension of $v$ to $N$. 
Then $(N|K,v)$ is a finite Galois extension, so we have $\Omega_{\cO_N|\cO_K}=0$. Since
also $(N|K',v)$ and $(K'|K,v)$ are finite Galois extensions, Theorem~\ref{GalTow} shows 
that $\Omega_{\cO_N|\cO_{K'}}=0$. Finally, since $(N|L,v)$ and $(L|K',v)$ are finite Galois
extensions, Theorem~\ref{GalTow} shows that $\Omega_{\cO_L|\cO_{K'}}=0$.
%
\sn
2): This follows from \cite[Theorem~1.8]{KuRz}.
\end{proof}

We split Theorem~\ref{GRthm} into the following two propositions, which 
we will prove separately. In view of Proposition~\ref{K'} it suffices to prove them 
under the assumption that $K$ contains a primitive $p$-th root of unity if 
$\chara K=p>0$, i.e., $K=K'$.
\begin{proposition}               \label{GRthm1}
If $\Omega_{\cO_{K\sep}|\cO_K}=0$, then $(K,v)$ is a deeply ramified field. 
\end{proposition}

\begin{proposition}               \label{GRthm2}
If $(K,v)$ is a deeply ramified field, then $\Omega_{\cO_{K\sep}|\cO_K}=0$. 
\end{proposition}

One of the implications of Theorem~\ref{GRthm+} will be proved in
Proposition~\ref{propGRthm1}, and the other in Proposition~\ref{GEofdrf}.

\sn
%
%
\subsection{Proof of Proposition~\ref{GRthm1}}                
\mbox{ }\sn
We will need some preparations. 
If the valued field $(K,v)$ is of characteristic 0 with residue characteristic $p>0$,
then we decompose $v=v_0\circ v_p\circ \ovl{v}$, where $v_0$ is the finest coarsening 
of $v$ that has residue characteristic 0, $v_p$ is a rank 1 valuation on $Kv_0\,$, and 
$\ovl{v}$ is the valuation induced by $v$ on the residue field of $v_p$ (which is of
characteristic $p>0$). The valuations $v_0$ and $\ovl{v}$ may be trivial. Note that 
while it makes no sense to compose the valuations as functions, in this notation the 
valuations are interpreted as their associated places (as we have done before by writing
``$Kv$''): in this way, $Kv=K(v_0\circ v_p\circ \ovl{v})=((Kv_0)v_p)\ovl{v}$. For 
simplicity, we will write $v_0v_p$ for $v_0\circ v_p$ and $v_p \ovl{v}$ for 
$v_p\circ \ovl{v}$.
In our decomposition, the valuation $v_p$ is at the center, so we define 
$\crf (K,v):= (Kv_0)v_p$ as one may call it the ``central residue field''. 
In the equal characteristic case, we set $\crf(K,v):=Kv$.

\pars
Now take any valued field $(K,v)$ of residue characteristic $p>0$. 
We will use the following observation; we note that $\cC_{vK}(vp)$ was denoted by 
$(vK)_{vp}$ in \cite{KuRz}.
\begin{proposition}                   \label{condDRvr}
If $K=K'$ and $\cC_{vK}(vp)$ is $p$-divisible, $Kv$ is perfect and all Galois 
extensions $(L|K,v)$ of prime degree $p$ with nontrivial defect satisfy 
$\Omega_{\cO_L|\cO_K}=0$, then $(K,v)$ satisfies (DRvr). 
\end{proposition}
\begin{proof}
We will show that the assumptions imply that $\crf(K,v)$ is perfect. Then the assertion
follows from \cite[Proposition~4.13]{KuRz} since by \cite[Theorem~1.4]{CuKuRz}, all 
Galois extensions $(L|K,v)$ of prime degree $p$ with nontrivial defect that satisfy 
$\Omega_{\cO_L|\cO_K}=0$ have independent defect in the sense of \cite{KuRz,CuKuRz}.

In the equal characteristic case, $\crf(K,v)=Kv$ and there is nothing to show. So we
assume that $(K,v)$ has mixed characteristic. 
%
%
Take any nonzero element of $\crf(K,v)$; it can be written as $bv_0 v_p$ 
with $b\in K$. 
Consider the extension $K(\eta)|K$ with $\eta^p=b$. We have that  
$\eta v_0 v_p$ is a $p$-th root of $bv_0 v_p$ in $\crf(K(\eta),v)$. 

Suppose that $bv_0 v_p$ does not have a $p$-th root in $\crf(K,v)$, so $K(\eta)|K$ is 
a Kummer extension of degree $p$. Then $(K(\eta)v_0v_p|Kv_0 v_p$ is purely inseparable 
of degree $p$. It follows that $v_0 v_p K(\eta)=v_0 v_p K$ and that $(K(\eta)|K,
v_0 v_p)$ and $(K(\eta)v_0v_p|Kv_0 v_p, \bar v)$ are unibranched. Consequently, 
$(K(\eta)|K,v)$ is unibranched. Further, as $\cC_{vK}(vp)$ and thus also
$\ovl{v}(Kv_0 v_p)$ is $p$-divisible, we have $\ovl{v}(K(\eta)v_0 v_p)=\ovl{v}
(Kv_0 v_p)$ and therefore, $vK(\eta)=vK$. Moreover, $K(\eta)v=K(\eta)v_0v_p\bar v$ is
a purely inseparable extension of $Kv=Kv_0v_p\bar v$ and since $Kv$ is perfect, we 
find that $K(\eta)v=Kv$. Thus $(K(\eta)|K,v)$ is
an extension with nontrivial defect. Since $(K,v)$ is an independent defect 
field, the defect must be independent. Hence by \cite[condition b) of Theorem~1.8]{CuKuRz},
\[
v(b-K^p)\>=\>\frac{p}{p-1}vp-\{\alpha\in pvK\mid \alpha >H\} 
\]
for some convex subgroup $H$ of $vK$ that does not contain $vp$, so also does not
contain $\frac{p}{p-1}vp$. It follows that there is some $a\in K$ such that
$v(b-a^p)>vp$, whence $(b-a^p)v_0 v_p=0$. This shows that $(a v_0 v_p)^p
=b v_0 v_p$, so that $bv_0 v_p$ has a $p$-th root in 
$\crf(K,v)$, which contradicts our assumption. 

We have now proved that $\crf(K,v)$ is perfect, as desired.
\end{proof}

\parm
Now we are ready to prove one part of Theorem~\ref{GRthm+}:
%
%
\begin{proposition}                       \label{propGRthm1}
If $K=K'$ and if $\Omega_{\cO_L|\cO_K}=0$ for all
unibranched Galois extensions $(L|K,v)$ of prime degree $p$, then $(K,v)$ is 
a deeply ramified field. 
\end{proposition}
\begin{proof}
We first deal with the equal characteristic case. In this case, $\cC_{vK}(vp)=vK$. 
Suppose that $vK$ is not $p$-divisible and take some $a\in K$ such that $va\notin
pvK$. We may assume that $va<0$. Take $\vartheta\in K\sep$ such that $\vartheta^p
-\vartheta=a$. Then $pv\vartheta=va$ and $(K(\vartheta)|K,v)$ is an Artin-Schreier
extension with $\rme(K(\vartheta)|K,v)=p$. Hence by Theorem~\ref{OASe}, 
$\Omega_{\cO_{K(\vartheta)}|\cO_K}\ne 0$, contradiction. Thus $\cC_{vK}(vp)=vK$ is
$p$-divisible, and in particular, (DRvg) holds.

Suppose that $Kv$ is not perfect, and take $b\in\cO_K^\times$ such that $bv$ does not 
have a $p$-th root in $Kv$. Take $c\in K$ such that $vc<0$ and $\vartheta\in 
K\sep$ such that $\vartheta^p-\vartheta=c^pb$. Then $(K(\vartheta)|K,v)$ is an 
Artin-Schreier extension with $K(\vartheta)v=Kv(bv^{1/p})$. Hence by Theorem~\ref{OASf}, 
$\Omega_{\cO_{K(\vartheta)}|\cO_K}\ne 0$, which again is a contradiction. Hence $Kv$ is 
perfect. Now Proposition~\ref{condDRvr} shows that also (DRvr) holds and consequently,
$(K,v)$ is a deeply ramified field. 

\parm
Now we deal with the mixed characteristic case. If we are able to show that $(K,v)$
satisfies (DRvg), $\cC_{vK}(vp)$ is $p$-divisible and $Kv$ is perfect, then we can as 
before apply Proposition~\ref{condDRvr} to obtain again that $(K,v)$ is a deeply ramified
field. 

Suppose that there is an archimedean component of $vK$ which is discrete. Pick $a
\in K$ such that $va<0$ and $va+\cC_{vK}^+(va)$ is the largest negative
element in $\cA_{vK}(va)$. Take $\eta\in K\sep$ such that $\eta^p\in K$ with 
$v\eta^p=va$. Then $v\eta+\cC_{vL}^+(v\eta)$ is the largest negative element in 
$\cA_{vL}(v\eta)$, not contained in $\cA_{vK}(pv\eta)$, and $(K(\eta)|K,v)$ is a 
Kummer extension with $\rme(K(\eta)|K,v)=p$. It follows that 
$(vK(\eta)/\cC_{vL}^+(v\eta):vK/\cC_{vK}^+(pv\eta))=p$, hence we must have 
$\cC_{vL}^+(v\eta)=\cC_{vK}^+(pv\eta)$. Therefore, $\cE$ is of type (DL2c) with
$H_\cE=\cC_{vL}^+(v\eta)$, so $\cM_\cE$ is a
principal $\cO_\cE$-ideal. From case i) of Theorem~\ref{OKume} we now infer that 
$\Omega_{\cO_{K(\eta)}|\cO_K}\ne 0$, contradiction. 

Suppose that $\cC_{vK}(vp)$ is not $p$-divisible and take some $a\in K$ such that $va\in
\cC_{vK}(vp)\setminus p\,\cC_{vK}(vp)\,$. We may assume that $va<0$. Take $\eta\in K\sep$ such
that $\eta^p=a$. Then $pv\eta=va$ and $(K(\eta)|K,v)$ is a Kummer extension 
with $\rme(K(\eta)|K,v)=p$. We have that $vI_\eta\cap\cC_{vL}(vp)\ne\emptyset$. This
implies that $vp\notin H_\cE\,$, whence $p\in\cM_\cE\,$. Again from case i) of
Theorem~\ref{OKume} we conclude that $\Omega_{\cO_{K(\eta)}|\cO_K}\ne 0$, contradiction. 

Suppose that $Kv$ is not perfect, and take $b\in\cO_K^\times$ such that $bv$ does not 
have a $p$-th root in $Kv$. Take $\eta\in K\sep$ such that $\eta^p=b$. Then $(K(\eta)|
K,v)$ is a Kummer extension with $K(\eta)v=Kv(bv^{1/p})$. Hence by Theorem~\ref{OKumf}, 
$\Omega_{\cO_{K(\eta)}|\cO_K}\ne 0$, which is again a contradiction.
This finishes the proof that $(K,v)$ is deeply ramified. 
\end{proof}

Now Proposition~\ref{GRthm1} follows from Proposition~\ref{propGRthm1} in conjunction with
part 1) of Proposition~\ref{K'}.

\sn
%
%
\subsection{Proof of Proposition~\ref{GRthm2}}                
\mbox{ }\sn
We first observe:
\begin{proposition}                          \label{GEofdrf}
Take a deeply ramified field $(K,v)$ such that $K=K'$, and a unibranched 
Galois extension $(L|K,v)$ of prime degree. Then $\Omega_{\cO_L|\cO_K}=0$.
\end{proposition}
\begin{proof}
In view of Theorem~\ref{MTsdrf1}, we only have to deal with the case of defectless
extensions. 

\pars
Assume that $\chara K=p$ and $(L|K,v)$ is an Artin-Schreier extension of degree $p$.
We have that $vK$ is $p$-divisible and $Kv$ is perfect by \cite[Lemma~4.2]{KuRz}.
Thus, the case of $\rme(L|K)=p$ cannot appear and we must have that $\rmf(L|K)=p$
with the extension $Lv|Kv$ separable. Hence $\Omega_{\cO_L|\cO_K}=0$ by 
Theorem~\ref{seprfe}. 

\pars
Assume that $(L|K,v)$ is a Kummer extension of prime degree $q=\rmf(L|K)$.
Again, $Lv|Kv$ is separable, so $\Omega_{\cO_L|\cO_K}=0$ by Theorem~\ref{seprfe}. 

\pars
Finally, assume that $\cE=(L|K,v)$ is a Kummer extension of prime degree $q=\rme(L|K)$.
Since each archimedean component of the deeply ramified field $(K,v)$ is dense, the same
holds for all archimedean components of $vL$. This shows that $\cE$ is not of type
(DL2c), so $\cM_\cE$ is a nonprincipal $\cO_\cE$-ideal.

If $q\ne \chara Kv$, then $vq=0$ implies that $q\notin\cM_L$ and hence $q\notin\cM_\cE\,$. 
From case i) of Theorem~\ref{OKume} we now obtain that $\Omega_{\cO_L|\cO_K}=0$.

If $q=\chara Kv$, then necessarily $\chara K=0$. By \cite[part (1) of Lemma~4.3]{KuRz}, 
$\cC_{vK}(vq)$ is $q$-divisible. If case ii) of Theorem~\ref{OKume} would apply, then 
by (\ref{vz-1}), $0<v(\eta-1)<v(\zeta_q-1)=\frac{vq}{q-1}$ with $v(\eta-1)\notin 
vK$, whence $v(\eta-1) \in \cC_{vL}(vq)$ and $(\cC_{vL}(vq):\cC_{vK}(vq))=q$. As this
contradicts the fact that $\cC_{vK}(vq)$ is $q$-divisible, case ii) cannot appear and
moreover, $vq\in H_\cE$ and thus $q\notin \cM_\cE$ since $\cC_{vL}(vq)=\cC_{vK}(vq)$.
By case i) of Theorem~\ref{OKume} we conclude that $\Omega_{\cO_L|\cO_K}=0$.
\end{proof}

\pars
Take any deeply ramified field $(K,v)$. By \cite[Corollary 1.7 (2)]{KuRz}, also the
henselization $(K,v)^h$ of $(K,v)$ inside of $(K\sep,v)$, for any of the conjugate extensions 
from $v$ from $K$ to $K\sep$, is a deeply ramified field.
By Proposition~\ref{KahCalc17} it suffices to prove that $\Omega_{\cO_{K\sep}|
\cO_{K^h}}=0$. We may therefore assume from the start that $(K,v)$ is henselian.
 
Part 1) of Theorem~\ref{Kahler} shows that in order to prove that $\Omega_{\cO_{K\sep}
|\cO_K}=0$ it suffices to prove that $\Omega_{\cO_L|\cO_K}=0$ for all finite Galois
subextensions $(L|K,v)$ of $(K\sep|K,v)$. Proposition~\ref{towerprop} shows that after
enlarging $(L|K,v)$ to a finite Galois extension $(M|K,v)$ if necessary, 
there is a tower of field extensions
\[
K\,\subset\, M_0\,\subset\, M_1\,\subset \cdots\subset\, M_m\>=\> M
\]
where $M_0$ is the inertia field of $(M|K,v)$ and each extension $M_{i+1}|
M_i$ is a Kummer extension of prime degree, or an Artin-Schreier extension if the
extension is of degree $p=\chara K$. By part 2) of Theorem~\ref{Kahler}, to prove that 
$\Omega_{\cO_M|\cO_K}=0$ it suffices to prove that $\Omega_{\cO_{M_{i+1}}|\cO_{M_i}}
=0$ for $0\le i\le m-1$. By Theorem~\ref{MTsdrf2}, $(M_i,v)$ is a deeply 
ramified field for each $i$, hence $\Omega_{\cO_{M_{i+1}}|\cO_{M_i}}=0$ by
Proposition~\ref{GEofdrf}. We have shown that $\Omega_{\cO_M|\cO_K}=0$. 

Since $M|K$ is a Galois extension, so is $M|L$. Hence we can apply 
Theorem~\ref{GalTow} to conclude that $\Omega_{\cO_L|\cO_K}=0$.
This completes our proof of Theorem~\ref{GRthm}.

\sn

\end{document}